\newtheorem{theorem}{Theorem}[section]
\newtheorem{lemma}[theorem]{Lemma}
\newtheorem{definition}[theorem]{Definition}
\newtheorem{proposition}[theorem]{Proposition}
\newtheorem{corollary}[theorem]{Corollary}
\theoremstyle{definition}
\newtheorem{remark}[theorem]{Remark}
\author{Qingnan An}
\address{School of Mathematics and Statistics, Northeast Normal University, Changchun, {\rm130024}, China}
\email{qingnanan1024@outlook.com}
\author{George A. Elliott}
\address{Department of Mathematics, University of Toronto, Toronto, Ontario,
Canada $~$M5S 2E4}
\email{elliott@math.toronto.edu}
\author{Zhichao Liu}
\address{School of Mathematical Sciences,
Dalian University of Technology,
Dalian, {\rm116024}, China }
\email{lzc.12@outlook.com}
\keywords{Classification; Cuntz semigroup; Strict comparison}
\subjclass[2000]{Primary 46L35, Secondary 46L05}
\begin{document}

\title[Classification of homomorphisms] {Classification of homomorphisms from $C(\Omega)$ to a $C^*$-algebra}

\begin{abstract}
  Let $\Omega$ be a compact subset of $\mathbb{C}$ 
  and let $A$ be a unital simple, separable $C^*$-algebra with stable rank one, real rank zero and strict comparison. We show that, given a Cu-morphism $\alpha:{\rm Cu}(C(\Omega))\to {\rm Cu}(A)$ with $\alpha(\langle \mathds{1}_{\Omega}\rangle)\leq  \langle 1_A\rangle$, there exists a  homomorphism $\phi: C(\Omega)\to A$  such that ${\rm Cu}(\phi)=\alpha$ and $\phi$ is unique up to approximate unitary equivalence. We also give classification results for maps from a large class of $C^*$-algebras to $A$ in terms of the Cuntz semigroup.
\end{abstract}

\maketitle
\section*{Introduction}


The Cuntz semigroup is an invariant for $C^*$-algebras that is intimately related to Elliott's classification program for simple, separable, nuclear $C^*$-algebras. Its original construction $W(A)$ resembles the semigroup $V(A)$ of Murray-von Neumann equivalence classes of projections, and  is a positively ordered, abelian semigroup whose elements are equivalence classes of positive elements in matrix algebras over $A$ \cite{Cu}. This was modified in \cite{CEI} by constructing an ordered semigroup, termed ${\rm Cu}(A)$, in terms of countably generated Hilbert modules. 
Moreover, a Cuntz category was described to which the Cuntz semigroup belongs and as a functor into which it preserves inductive limits.
The Cuntz semigroup has been successfully used to classify certain classes of $C^*$-algebras, as well as maps between them. In 2008, Ciuperca and Elliott classified homomorphisms from $C_0((0,1])$ into an arbitrary  $C^*$-algebra of stable rank one in terms of the Cuntz semigroup \cite{CE}. Later, the codomain was extended to a larger class in \cite{RS}. These results can also be regarded as a classification of positive elements. Subsequently, Robert greatly expanded the domain $C_0((0,1])$ to the class of direct limits of one-dimensional NCCW-complexes with trivial $K_1$-group \cite{R2012}. More specifically, He employed a series of techniques to reduce complicated domains to $C[0,1]$ and applied the classification result in \cite{CE}. For the more general domain $C(\Omega)$, it is still expected that the Cuntz semigroup can be used in some sense. Further research and investigation are needed to explore the applicability and potential of the Cuntz semigroup in this broader field.

In this paper, 
our primary focus is on the classification of homomorphisms from the algebra of continuous functions  $C(\Omega)$ to a unital simple, separable $C^*$-algebra $A$ with stable rank one, real rank zero, and strict comparison. Using the properties of the Cuntz semigroup, we can lift the  Cu-morphism to a homomorphism approximately. Based on spectral information, we associate these homomorphisms to normal elements and use a result of Hu and Lin. Then, we establish a uniqueness result and use this to get a homomorphism exactly. Finally, we classify the homomorphisms from $C(\Omega)$ to $A$ in terms of the Cuntz semigroup. Additionally, we employ the augmented Cuntz semigroup introduced by Robert to classify more general non-unital cases.

\section{Preliminaries}
\begin{definition}\rm
  Let $A$ be a unital $C^*$-algebra. $A$ is said to have stable rank one, written $sr(A)=1$, if the set of invertible elements of $A$ is dense. $A$ is said to have real rank zero, written $rr(A)=0$, if the set of invertible self-adjoint elements is dense in the set $A_{sa}$ of self-adjoint elements of $A$. If $A$ is not unital, let us denote the minimal unitization of $A$ by $A^\sim$. A non-unital $\mathrm{C}^*$-algebra is said to have stable rank one (or real rank zero), if its unitization has stable rank one (or real rank zero).

Let $p$ and $q$ be two projections in $A$. One says that $p$ is $Murray$--$von$ $Neumann$ $equivalent$ to $q$ in $A$ and writes $p\sim q$ if there exists $x\in A$ such that $x^*x=p$ and $xx^*=q$. We will write $p\preceq q$ if $p$ is equivalent to some subprojection of $q$. The class of a projection $p$ in $K_0(A)$ will be denoted by $[p]$.

$A$ is said to have cancellation of projections if, for any projections $p,q,e,f\in A$ with $pe=0$, $qf=0$, $e\sim f$, and $p+e\sim q+f$, necessarily $p\sim q$. $A$ has cancellation of projections if and only if $p\sim q$ implies that there exists a unitary $u\in A^\sim$ such that $u^*pu=q$. 
It is well known that every unital ${C}^*$-algebra of stable rank one has cancellation of projections.
\end{definition}


\begin{definition}\rm {\rm (}\cite{BH}{\rm )}
 A quasitrace on a $C^*$-algebra $A$ is a function $\tau: A \rightarrow \mathbb{C}$ such that:

(i) $0 \leq \tau\left(x^* x\right)=\tau\left(x x^*\right)$ for all $x$ in $A$;

(ii) $\tau$ is linear on commutative ${ }^*$-subalgebras of $A$;

(iii) If $x=a+i b$ with $a, b$ self-adjoint, then
$
\tau(x)=\tau(a)+i \tau(b).
$

If $\tau$ extends to a quasitrace on $M_2(A)$, then $\tau$ is called a 2-quasitrace. A linear quasitrace is a trace.

If $A$ is unital and $\tau(1)=1$, then we say  $\tau$ is $normalized$. Denote by $QT_2(A)$ the space of all the normalized 2-quasitraces on $A$ and by $T(A)$ the space of all the tracial states on $A$. Denote by ${\rm \bf QT}_2(A)$ the subset of ${QT}_2(A)$ consisting of all the normalized lower semicontinuous 2-quasitraces on $A$.
\end{definition}
\begin{remark}\label{quasi tr}
It is an open question  whether every 2-quasitrace on a $C^*$-algebra is a trace (asked by Kaplansky). A theorem of Haagerup \cite{H} says that if $A$ is exact and unital then every bounded 2-quasitrace on $A$ is a trace. This theorem can be extended to obtain that every lower semicontinuous 2-quasitrace on an exact $C^*$-algebra must be a trace (see \cite[Remark 2.29(i)]{BK}). Brown and Winter \cite{BW}  presented a short proof of Haagerup's result in the finite nuclear dimension case. Note that if $A$ is a unital simple $C^*$-algebra of stable rank one, real rank zero, and strict comparison, then ${QT}_2(A)=T(A)$ (see \cite[Theorem 2.9]{KNZ}).
\end{remark}


\begin{definition}\rm
  ({\bf Cuntz semigroup}) Denote  the cone of positive elements of $A$ by $A_+$. Let $a,b\in A_+$. One says that $a$ is $Cuntz$ $subequivalent$ to $b$, denoted by $a\lesssim_{\rm Cu} b$, if there exists a sequence $(r_n)$ in $A$ such that $r_n^*br_n\rightarrow a$. One says that $a$ is  $Cuntz$ $equivalent$ to $b$, denoted by $a\sim_{\rm Cu} b$, if $a\lesssim_{\rm Cu} b$ and $b\lesssim_{\rm Cu} a$. The $Cuntz$ $semigroup$ of $A$ is defined as ${\rm Cu}(A)=(A\otimes\mathcal{K})_+/\sim_{\rm Cu}$. We will denote the class of $a\in (A\otimes\mathcal{K})_+$ in ${\rm Cu}(A)$ by $\langle a \rangle$.  ${\rm Cu}(A)$ is a positively ordered abelian semigroup when equipped with the addition: $\langle a \rangle+\langle b\rangle =\langle a \oplus b\rangle$, and the relation:
  $$
   \langle a\rangle\leq \langle b\rangle \Leftrightarrow a\lesssim_{\rm Cu} b,\quad a,b\in (A\otimes\mathcal{K})_+.
  $$
\end{definition}
The following facts are well known; see \cite{R}.

\begin{lemma}\label{R lem}
Let $A$ be a C*-algebra, let $a,b\in A_+$, and let  $p,q$ be  projections. Then

{\rm (i)} $a \lesssim_{\rm Cu} b$ if and only if $(a-\varepsilon)_{+} \lesssim_{\rm Cu} b$ for all $\varepsilon>0$;

{\rm (ii)} if $\|a-b\|<\varepsilon$, then $(a-\varepsilon)_{+} \lesssim_{\rm Cu} b$;

{\rm (iii)} $p \preceq q$ if and only if $p \lesssim_{\rm Cu} q$.
\end{lemma}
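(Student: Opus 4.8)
\emph{Proof strategy.} All three assertions are standard; the plan is to extract everything from the one functional-calculus estimate, namely (ii), together with routine manipulations of the defining condition for $\lesssim_{\rm Cu}$. I would begin by recording the elementary facts that $0\le x\le y$ implies $x\lesssim_{\rm Cu}y$ (witnessed by $r_n=(y+1/n)^{-1/2}x^{1/2}$), that $x^*x\sim_{\rm Cu}xx^*$ and hence $r^*cr\lesssim_{\rm Cu}c$ for any $r\in A$, $c\in A_+$ (since $r^*cr=(r^*c^{1/2})(c^{1/2}r)\sim_{\rm Cu}c^{1/2}rr^*c^{1/2}\le\|r\|^2c$), that $c\sim_{\rm Cu}c^\beta$ for all $\beta>0$, and that $(c-\varepsilon)_{+}\le(c-\delta)_{+}$ when $\varepsilon\ge\delta\ge0$.

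Next I would prove (ii), which is the heart of the matter. Writing $\eta=\|a-b\|$, the hypothesis gives $a\le b+\eta\cdot 1$ in $A^\sim$; compressing this inequality by $e:=(a-\varepsilon)_{+}^{1/2}$ and using that $eae$ and $e^2$ are the elements $t(t-\varepsilon)_{+}$ and $(t-\varepsilon)_{+}$ of $C^*(a)$ produces $(a-\eta)(a-\varepsilon)_{+}\le ebe$ (the left side understood in $C^*(a)$). Since $\eta\le\varepsilon$, the pointwise inequality $(t-\varepsilon)_{+}^2\le(t-\eta)(t-\varepsilon)_{+}$ holds on $[0,\infty)$, so $(a-\varepsilon)_{+}^2\le ebe$. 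As $ebe=(eb^{1/2})(b^{1/2}e)\sim_{\rm Cu}b^{1/2}e^2b^{1/2}\le\|e\|^2b$, one gets $ebe\lesssim_{\rm Cu}b$, and combining with $(a-\varepsilon)_{+}\sim_{\rm Cu}(a-\varepsilon)_{+}^2$ completes (ii). Part (i) then follows formally: if $r_n^*br_n\to a$, then for large $n$ we have $\|r_n^*br_n-a\|<\varepsilon$, whence $(a-\varepsilon)_{+}\lesssim_{\rm Cu}r_n^*br_n\lesssim_{\rm Cu}b$ by (ii); conversely, if $(a-\varepsilon)_{+}\lesssim_{\rm Cu}b$ for all $\varepsilon$, choosing $s_n$ with $\|s_n^*bs_n-(a-\tfrac1{2n})_{+}\|<\tfrac1{2n}$ and noting $\|(a-\tfrac1{2n})_{+}-a\|\le\tfrac1{2n}$ shows $s_n^*bs_n\to a$.

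For (iii), the implication $p\preceq q\Rightarrow p\lesssim_{\rm Cu}q$ is immediate: take a partial isometry $v$ with $v^*v=p$ and $p':=vv^*\le q$; the constant sequence $r_n=v$ gives $v^*p'v=p$, so $p\sim_{\rm Cu}p'\le q$. For the converse I would use (ii): given $p\lesssim_{\rm Cu}q$, fix $\varepsilon<1/2$ and pick $r$ with $b:=r^*qr$ satisfying $\|b-p\|<\varepsilon$. Since $p$ is a projection, $\mathrm{sp}(b)\subseteq[0,\varepsilon]\cup[1-\varepsilon,1+\varepsilon]$, so for a continuous $g$ equal to $0$ on $[0,\varepsilon]$ and $1$ on $[1-\varepsilon,1+\varepsilon]$ the element $e:=g(b)$ is a projection with $\|e-p\|<2\varepsilon<1$, hence $e\sim p$ by the usual norm-perturbation argument for projections. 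Moreover $b\le\|r\|^2q$ forces $e\in\overline{bAb}\subseteq\overline{qAq}$, and any projection in the hereditary subalgebra $\overline{qAq}$ satisfies $e=qeq\le q$; thus $p\sim e\le q$, i.e.\ $p\preceq q$.

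The only point requiring genuine attention, rather than bookkeeping, is the functional-calculus estimate in (ii): one has to route the argument through the squares $(a-\varepsilon)_{+}^2$ precisely because $t\mapsto t_{+}$ is not operator monotone, and one must keep track of the inequality $\eta\le\varepsilon$. Everything else reduces to the elementary facts listed at the outset.
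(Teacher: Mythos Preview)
The paper does not prove this lemma at all: it simply records the three facts as ``well known'' and refers the reader to R\o rdam \cite{R}. So there is no paper proof to compare strategies with; what matters is whether your argument stands on its own.

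Your treatments of (ii) and (i) are correct and are essentially the standard ones. The point you flag---that one must pass through $(a-\varepsilon)_+^2$ because $t\mapsto t_+$ is not operator monotone---is exactly right, and the derivation $(a-\varepsilon)_+^2\le(a-\eta)(a-\varepsilon)_+\le ebe\lesssim_{\rm Cu}b$ is clean.

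There is, however, a genuine gap in your proof of the converse in (iii). You assert that $b:=r^*qr$ satisfies $b\le\|r\|^2 q$, and from this conclude $\overline{bAb}\subseteq\overline{qAq}$. The inequality $r^*qr\le\|r\|^2 q$ is false in general: take $A=M_2(\mathbb{C})$, $q=e_{11}$, $r=e_{12}$; then $r^*qr=e_{22}$, which is orthogonal to $q$. So $b$ need not lie in $qAq$ at all, and your projection $e=g(b)$ need not be $\le q$.

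The fix is to move to the other side of the polar decomposition. Set $x:=qr$, so that $b=x^*x$ and $c:=xx^*=qrr^*q\in qAq$ (and now $c\le\|r\|^2 q$ \emph{is} correct). The spectra of $b$ and $c$ agree away from $0$, so $e':=g(c)$ is a projection in $\overline{cAc}\subseteq qAq$, hence $e'\le q$. With $h$ continuous satisfying $t\,h(t)^2=g(t)$ on $\mathrm{sp}(b)$ (possible since $g$ vanishes on $[0,\varepsilon]$ and the spectrum has a gap), the element $v:=x\,h(x^*x)$ satisfies $v^*v=g(x^*x)=e$ and $vv^*=g(xx^*)=e'$, so $p\sim e\sim e'\le q$. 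With this correction your argument goes through.
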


\begin{definition}\rm (\cite{CEI})\label{Cuaxiom}
  ({\bf The category Cu})  Let $(S, \leq)$ be a positively ordered semigroup. 
  For $x$ and $y$ in $S$, let us say that $x$ is compactly contained in $y$ (or $x$ is way-below $y$), and denote it by  $x \ll y$, if for every increasing sequence $(y_n)$ in $S$ that has a supremum, if $y\leq\sup _{n \in \mathbb{N}} y_{n}$, then there exists $k$ such that $x\leq y_{k} .$ This is an auxiliary relation on $S$, called the compact containment relation.
  \, If $x\in S$ satisfies $x\ll x$, we say that $x$ is compact.

We say that $S$ is a Cu-semigroup of the Cuntz category Cu, if it has a 0 element (so is a monoid) and satisfies the following order-theoretic axioms:

(O1): Every increasing sequence of elements in $S$ has a supremum.

(O2): For any $x \in S$, there exists a $\ll$-increasing sequence $\left(x_{n}\right)_{n \in \mathbb{N}}$ in $S$ such that $\sup_{n \in \mathbb{N}} x_{n}=x$.

(O3): Addition and the compact containment relation are compatible.

(O4): Addition and suprema of increasing sequences are compatible.

A Cu-morphism between two Cu-semigroups is a positively ordered monoid morphism that preserves the compact containment relation and suprema of increasing sequences.


\end{definition}
\begin{definition}\rm \label{wk cancel}
  Let $S$ be a Cu-semigroup. We say that $S$ has weak cancellation, if for every $x, y, z, z' \in S$ with $z'\ll z$ we have that $x+z \ll y+z'$ implies $x \leq y$. It was shown in  \cite[Theorem 4.3]{RW} that the Cuntz semigroup of a $C^*$-algebra with stable rank one has weak cancellation (see also \cite{E}).
\end{definition}

The following is a foundation result which establishs the realtion between $C^*$-algebra and the category Cu.
\begin{theorem}{\rm (}\cite{CEI}{\rm )}
Let $A$ be a ${C}^*$-algebra. Then $\mathrm{Cu}(A)$ is a {\rm Cu}-semigroup. Moreover, if $\varphi: A \rightarrow B$ is a $*$-homomorphism between ${C}^*$-algebras, then $\varphi$ naturally induces a $\mathrm{Cu}$-morphism $\mathrm{Cu}(\varphi): \mathrm{Cu}(A) \rightarrow \mathrm{Cu}(B)$.
\end{theorem}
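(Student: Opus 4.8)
The plan is to verify the four axioms (O1)--(O4) of Definition~\ref{Cuaxiom} for $\mathrm{Cu}(A)=(A\otimes\mathcal K)_+/\!\sim_{\rm Cu}$, and then to check functoriality. Since $A\otimes\mathcal K$ is stable, I would first replace $A$ by $A\otimes\mathcal K$, so that the orthogonal sum $a\oplus b$ can be realized inside $A$ itself (via a $*$-isomorphism $\mathcal K\oplus\mathcal K\cong\mathcal K$); this makes $\mathrm{Cu}(A)$ a positively ordered abelian monoid with zero $\langle 0\rangle$. The single fact driving all four axioms is the following strengthening of Lemma~\ref{R lem}, which I would prove first: if $a\lesssim_{\rm Cu}b$ and $b=\sup_k b_k$ for an increasing sequence $(b_k)$ of positive elements, then for every $\varepsilon>0$ there is $k$ with $(a-\varepsilon)_+\lesssim_{\rm Cu}b_k$. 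Indeed, choosing $r$ with $\|r^*br-a\|<\varepsilon$ gives $(a-\varepsilon)_+\lesssim_{\rm Cu}r^*br$ by Lemma~\ref{R lem}(ii); since $r^*b_kr\to r^*br$, applying Lemma~\ref{R lem}(i),(ii) to $r^*b_kr$ for $k$ large gives $(a-\varepsilon)_+\lesssim_{\rm Cu}r^*b_kr\lesssim_{\rm Cu}b_k$.

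For (O2), given $x=\langle a\rangle$ I would take $x_n=\langle(a-1/n)_+\rangle$: Lemma~\ref{R lem}(i) gives $\sup_n x_n=x$, and the displayed fact shows $x_n\ll x_{n+2}$, so $(x_n)$ is $\ll$-increasing with supremum $x$. Axiom (O1) is the substantive point, and the one I expect to be the main obstacle. Starting from an increasing sequence $\langle a_1\rangle\le\langle a_2\rangle\le\cdots$, I would pass to a subsequence with each subequivalence holding ``with room to spare'' (Lemma~\ref{R lem}(i),(ii)), run a telescoping argument producing contractions that implement these subequivalences, and thereby replace $(a_n)$ by an \emph{honestly} increasing sequence $b_1\le b_2\le\cdots$ of positive elements in the stable algebra with $\langle b_n\rangle=\langle a_n\rangle$. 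One then sets $b$ to be a positive element whose hereditary subalgebra is $\overline{\bigcup_n\overline{b_nAb_n}}$ (concretely $b=\sum_n 2^{-n}b_n$ after normalizing): then $\langle b_n\rangle\le\langle b\rangle$ trivially, while if $\langle b_n\rangle\le y$ for all $n$, writing $\langle b\rangle=\sup_{\varepsilon>0}\langle(b-\varepsilon)_+\rangle$ and invoking the displayed fact gives $(b-\varepsilon)_+\lesssim_{\rm Cu}b_n$ for some $n$, hence $\langle b\rangle\le y$; so $\langle b\rangle=\sup_n\langle a_n\rangle$. (Alternatively, the whole argument can be run in the Hilbert-module picture of \cite{CEI}, where the supremum is simply the inductive-limit module.)

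Axioms (O3) and (O4) would then follow formally from (O1), (O2) and the displayed fact. For (O4), monotonicity gives $\sup_n(x_n+y_n)\le\sup_n x_n+\sup_n y_n$, and conversely, if $x'\ll\sup_n x_n$ and $y'\ll\sup_n y_n$ then $x'\le x_m$, $y'\le y_m$ for a common $m$, so $x'+y'\le x_m+y_m\le\sup_k(x_k+y_k)$; letting $x',y'$ range over the $\ll$-increasing approximating sequences from (O2) gives the reverse inequality. For (O3), given $x'\ll x$ and $y'\ll y$, I would write $x=\sup_n x_n$, $y=\sup_n y_n$ with $\ll$-increasing sequences, note $x'\le x_n$, $y'\le y_n$ for some $n$, and check $x_n+y_n\ll x+y$ directly: any increasing $(z_k)$ with $x+y\le\sup_k z_k$ admits, by two applications of the displayed fact together with (O4), an index $k$ with $x_n+y_n\le z_k$.

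For functoriality, the stabilized map $\varphi\otimes\mathrm{id}_{\mathcal K}\colon A\otimes\mathcal K\to B\otimes\mathcal K$ carries positive elements to positive elements and respects $\lesssim_{\rm Cu}$ (apply it to $r_n^*br_n\to a$), so it descends to an order-preserving monoid morphism $\mathrm{Cu}(\varphi)$ with $\mathrm{Cu}(\varphi)\langle a\rangle=\langle\varphi(a)\rangle$; as $\varphi$ commutes with continuous functional calculus, $\mathrm{Cu}(\varphi)\langle(a-\varepsilon)_+\rangle=\langle(\varphi(a)-\varepsilon)_+\rangle$. To see $\mathrm{Cu}(\varphi)$ preserves $\ll$: if $\langle a\rangle\ll\langle b\rangle$, then by (O2) there is $\varepsilon>0$ with $\langle a\rangle\le\langle(b-\varepsilon)_+\rangle$, whence $\langle\varphi(a)\rangle\le\langle(\varphi(b)-\varepsilon)_+\rangle\ll\langle\varphi(b)\rangle$. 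To see it preserves suprema of increasing sequences, realize $\sup_n\langle a_n\rangle$ as in (O1) by honestly increasing positive elements $b_n\le b$: then $\varphi(b_n)\le\varphi(b)$ gives $\sup_n\langle\varphi(b_n)\rangle\le\langle\varphi(b)\rangle$, and since $(b-\varepsilon)_+\lesssim_{\rm Cu}b_n$ for some $n$ (established in (O1)) we get $(\varphi(b)-\varepsilon)_+\lesssim_{\rm Cu}\varphi(b_n)$, so $\langle\varphi(b)\rangle=\sup_{\varepsilon}\langle(\varphi(b)-\varepsilon)_+\rangle\le\sup_n\langle\varphi(b_n)\rangle$; hence $\mathrm{Cu}(\varphi)(\sup_n\langle a_n\rangle)=\sup_n\mathrm{Cu}(\varphi)\langle a_n\rangle$, and $\mathrm{Cu}(\varphi)$ is a Cu-morphism.
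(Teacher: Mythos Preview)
The paper does not give a proof of this theorem; it is quoted from \cite{CEI} as a foundational result and used without argument. So there is no ``paper's proof'' to compare against---your proposal is being measured against the original \cite{CEI} argument (or its by-now-standard reformulation in terms of positive elements).

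Your outline is essentially that standard argument, and the ingredients are the right ones. Two points deserve comment. First, the logical order of (O1) and (O2) is inverted in your write-up: to verify $\langle(a-1/n)_+\rangle\ll\langle(a-1/(n+2))_+\rangle$ you must test against \emph{every} increasing sequence $(y_k)$ in $\mathrm{Cu}(A)$ that happens to have a supremum, and your ``displayed fact'' only handles sequences coming from norm-increasing positive elements. It is precisely the (O1) construction that lets you realize an arbitrary supremum in that form, so (O1) must be established first; then the characterization $\langle a\rangle\ll\langle b\rangle\iff\exists\,\varepsilon>0$ with $\langle a\rangle\le\langle(b-\varepsilon)_+\rangle$ follows, and (O2)--(O4) and functoriality go through exactly as you describe. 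Second, your (O1) sketch (``telescoping argument producing contractions'') is the genuinely nontrivial step and is only gestured at; in the positive-element picture one does not quite get $\langle b_n\rangle=\langle a_n\rangle$ with $b_n$ increasing, but rather an interleaving after $\varepsilon$-cutdowns, which suffices for the supremum. You rightly note that the Hilbert-module picture of \cite{CEI} makes this step transparent (the supremum is the inductive-limit module), and that is in fact how \cite{CEI} proceeds.
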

\begin{definition}{\rm (}\cite{ERS}{\rm )} \rm
 Let $A$ be a $C^*$-algebra.  A $functional$ on ${\rm Cu}(A)$ is a map $f:{\rm Cu}(A)\rightarrow [0,\infty]$ which takes 0 into 0 and preserves addition, order  and  the suprema of increasing sequences. Denote by  $\mathrm{F}({\rm Cu}(A))$ the set of all the functionals on $\mathrm{Cu}(A)$ endowed with the topology in which a net $\left(\lambda_i\right)$ converges to  $\lambda$ if
$$
\limsup \lambda_i(x) \leq \lambda(y) \leq \liminf \lambda_i(y)
$$
for all $x, y \in {\rm Cu}(A)$ such that $x \ll y$.

If $A$ is unital, a functional $\lambda$ on ${\rm Cu}(A)$ is said to be normalized if $\lambda([1])=1$. Denote by $\mathrm{F}_{[1]}({\rm Cu}(A))$ for the set of all the normalized functionals on ${\rm Cu}(A)$.

\end{definition}

\begin{definition}\rm \label{d def}
   Let $\tau\in QT_2(A)$, we define a map $d_\tau:A\otimes \mathcal{K}\rightarrow[0,\infty]$ by
  $$
  d_\tau(a)=\lim_{n\rightarrow\infty}\tau(a^{\frac{1}{n}}).
  $$
  It has the following properties:

  (1) if $a\lesssim_{\rm Cu}b$, then $d_\tau(a)\leq d_\tau(b)$;

  (2) if $a$ and $b$ are mutually orthogonal, then $d_\tau(a+b)=d_\tau(a)+ d_\tau(b)$;

  (3) $d_\tau((a-\varepsilon)_+)\rightarrow d_\tau(a)$ ($\varepsilon\rightarrow 0$).

   This map depends only on the Cuntz equivalence class of $a\in A\otimes \mathcal{K}$. Hence, we will also $d_\tau$ to denote the induced normalized functional on ${\rm Cu}(A)$.
\end{definition}
\begin{remark}\label{on QT}
Given $\lambda\in {\rm F}_{[1]}({\rm Cu}(A))$, the function
$$
\tau_\lambda(a)=\int_{0}^{\infty}\lambda(\langle(a-t)_+\rangle)dt
$$
defined on $A_+$ can be extended to a normalized lower semicontinuous quasitrace on $A$. If $A$ is separable, it can be checked that ${\rm \bf QT}_2(A)$ has a countable basis (see \cite[Theorem 3.7]{ERS}).
\end{remark}
The following result is  from \cite[Theorem 4.4]{ERS} (see also \cite[Theorem 6.9]{GP}).
\begin{theorem} \label{QT FCu}
Let $A$ be a unital $C^*$-algebra. Then the cones ${\rm \bf QT}_2(A)$ and ${\rm F}_{[1]}({\rm Cu}(A))$ are compact and Hausdorff, and the map $\tau\mapsto \lambda_\tau$ is a homeomorphism between them.
\end{theorem}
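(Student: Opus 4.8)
\noindent The plan is to realize $\tau\mapsto\lambda_\tau$ and $\lambda\mapsto\tau_\lambda$ as mutually inverse, mutually continuous maps, to prove directly that ${\rm F}_{[1]}({\rm Cu}(A))$ is compact and Hausdorff, and to transport those properties along the resulting homeomorphism; here ${\rm \bf QT}_2(A)$ carries the topology of pointwise convergence on $\bigcup_n M_n(A)$. First I would put $\lambda_\tau:=d_\tau$ as in Definition~\ref{d def} and verify it is a functional. A normalized lower semicontinuous $2$-quasitrace extends to a lower semicontinuous $2$-quasitrace on every $M_n(A)$ and on $A\otimes\mathcal{K}$, so $d_\tau$ is defined on all of ${\rm Cu}(A)$, and Definition~\ref{d def}(1)--(3) give order-preservation, orthogonal additivity, and $d_\tau(\langle(a-\varepsilon)_+\rangle)\nearrow d_\tau(\langle a\rangle)$. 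The remaining axioms are genuine additivity --- which follows from orthogonal additivity since $\langle a\rangle+\langle b\rangle=\langle a\oplus b\rangle$ with $a\oplus 0\perp 0\oplus b$ in $M_2(A)\otimes\mathcal{K}$, and this is precisely where the $2$-quasitrace hypothesis enters --- and preservation of suprema of increasing sequences, which is where lower semicontinuity of $\tau$ is used (passing to the second dual, $d_\tau(\langle a\rangle)=\tau(s(a))$ for the open support projection $s(a)$). Normalization is immediate. Conversely, Remark~\ref{on QT} (that is, \cite[Theorem~3.7]{ERS}) already provides that $\tau_\lambda$ is a normalized lower semicontinuous $2$-quasitrace, and $\tau_\lambda(1)=\int_0^1\lambda(\langle 1\rangle)\,dt=1$ because $(1-t)_+$ is invertible in $A$ for $t<1$ and $0$ for $t\geq 1$.

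Next I would prove the two maps are mutually inverse. The key identity is $\tau(a)=\int_0^\infty d_\tau(\langle(a-t)_+\rangle)\,dt$ for $a\in A_+$; this is unproblematic for quasitraces because every $(a-t)_+$ lies in the commutative algebra $C^*(a)$, on which $\tau$ is integration against a Radon measure, so the identity is just the layer-cake formula. This gives $\tau_{d_\tau}=\tau$. For the other composition, using $(a^{1/n}-t)_+\sim_{\rm Cu}(a-t^n)_+$ and a change of variable I would rewrite $\tau_\lambda(a^{1/n})$ (after rescaling so that $\|a\|\leq1$) as the integral of $\lambda(\langle(a-u)_+\rangle)$ against the probability density $\tfrac1n u^{\frac1n-1}$ on $(0,1)$; since that density concentrates at $0$ while $u\mapsto\lambda(\langle(a-u)_+\rangle)$ is nonincreasing with supremum $\lambda(\langle a\rangle)$ (as $\lambda$ preserves suprema of increasing sequences), the integral tends to $\lambda(\langle a\rangle)$, so $d_{\tau_\lambda}(\langle a\rangle)=\lim_n\tau_\lambda(a^{1/n})=\lambda(\langle a\rangle)$; the same computation over $A\otimes\mathcal{K}$ handles all of ${\rm Cu}(A)$.

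For continuity I would use the squeeze
\[
d_\tau(\langle(a-\varepsilon)_+\rangle)\;\leq\;\tau\bigl(f_{\varepsilon',\varepsilon}(a)\bigr)\;\leq\;d_\tau(\langle(a-\varepsilon')_+\rangle)\qquad(0<\varepsilon'<\varepsilon),
\]
where $f_{\varepsilon',\varepsilon}$ equals $0$ on $[0,\varepsilon']$, $1$ on $[\varepsilon,\infty)$, and is linear between. Given $x\ll y$ in ${\rm Cu}(A)$, using axiom (O2) and that $y$ is the supremum of the increasing sequence of classes of the compressions $p_kap_k\in M_k(A)$ of a representative $a$ of $y$, one finds $b\in M_n(A)_+$ and $0<\varepsilon'<\varepsilon$ with $x\leq\langle(b-\varepsilon)_+\rangle$ and $\langle(b-\varepsilon')_+\rangle\leq y$; feeding this into the squeeze shows that pointwise convergence $\tau_i\to\tau$ forces $\limsup_i d_{\tau_i}(x)\leq d_\tau(y)$ together with the complementary inequality, i.e.\ $d_{\tau_i}\to d_\tau$. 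Conversely, if $d_{\tau_i}\to d_\tau$, then for $a\in M_n(A)_+$ the integrands in $\tau_i(a)=\int_0^\infty d_{\tau_i}(\langle(a-t)_+\rangle)\,dt$ converge at the continuity points of $t\mapsto d_\tau(\langle(a-t)_+\rangle)$ (using $\langle(a-t')_+\rangle\ll\langle(a-t)_+\rangle$ for $t'>t$) and are uniformly dominated by an integrable function, so $\tau_i(a)\to\tau(a)$; hence both maps are continuous and the bijection is a homeomorphism. It remains to show ${\rm F}_{[1]}({\rm Cu}(A))$ is compact Hausdorff: Hausdorffness is immediate, and for compactness I would, following \cite[\S3]{ERS}, embed it into the compact product $\prod_{x\in{\rm Cu}(A)}[0,\infty]$ and show the image is closed, the subtle point being preservation of suprema of increasing sequences, which is recovered by replacing a pointwise limit by its lower semicontinuous regularization along the relation $\ll$. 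Transporting compactness and Hausdorffness across the homeomorphism then settles ${\rm \bf QT}_2(A)$.

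The hard part will be this last step. The topology on ${\rm F}_{[1]}({\rm Cu}(A))$ is strictly weaker than pointwise convergence, so compactness requires genuine care to ensure that a limit point of a net of functionals remains a functional --- above all that it still preserves suprema of increasing sequences --- and this is precisely where the regularization device and the role of $\ll$ are indispensable. The mutual-inverse identity, by contrast, which is a one-line Fubini computation for an honest trace, goes through here with no extra effort exactly because it only ever sees the commutative subalgebra $C^*(a)$.
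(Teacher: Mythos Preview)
The paper does not prove this theorem: it merely states it as a citation, writing ``The following result is from \cite[Theorem~4.4]{ERS} (see also \cite[Theorem~6.9]{GP}),'' and provides no argument of its own. So there is no ``paper's own proof'' against which to compare your attempt.

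Your proposal is in fact a faithful sketch of the argument in \cite{ERS}, and the pieces fit together correctly. A couple of minor remarks. In the continuity step, from $x\ll y$ and $y=\sup_k\langle p_kap_k\rangle$ you only get $x\leq\langle p_kap_k\rangle$, not $x\ll\langle p_kap_k\rangle$; to extract the $\varepsilon$ you should first interpolate $x\ll x'\ll y$ using (O2), then pick $k$ with $x'\leq\langle p_kap_k\rangle$ and use $x\ll x'$ to obtain $x\leq\langle(b-\varepsilon)_+\rangle$ for $b=p_kap_k$. In the inverse direction, when showing $\tau_i(a)\to\tau(a)$, you want to ensure the dominating function is integrable; the bound $d_{\tau_i}(\langle(a-t)_+\rangle)\leq d_{\tau_i}(\langle 1_{M_n}\rangle)=n$ on $[0,\|a\|]$ and $0$ beyond suffices. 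Finally, your acknowledgment that the compactness of ${\rm F}_{[1]}({\rm Cu}(A))$ is the delicate point is exactly right: that is the substance of \cite[\S3]{ERS}, and regularization along $\ll$ is indeed how one recovers preservation of suprema for a pointwise limit.
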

 It follows that if $A$ is exact then every functional on $\mathrm{Cu}(A)$ arises from a lower semicontinuous trace.

Combining with the above results, we give the   strict comparison.
\begin{proposition}\label{sc!}
  Suppose that $A$ is unital, then the following statements are equivalent:

  {\rm (i)} $A$ has $strict\,comparison\,(of\,positive\,elements)$, i.e., for any nonzero $a,b\in (A\otimes \mathcal{K})_+$, $d_\tau(a)< d_\tau(b),\,\,\forall\tau\in QT_2(A)$ implies $a\lesssim_{\rm Cu} b$.

  {\rm (ii)} For any nonzero $a,b\in (A\otimes \mathcal{K})_+$, $d_\tau(a)< d_\tau(b),\,\,\forall\tau\in {\rm \bf QT}_2(A)$ implies $a\lesssim_{\rm Cu} b$.

  {\rm (iii)} For any $s,t\in {\rm Cu}(A)$, $\lambda(s)<\lambda(t),\,\forall \lambda\in {\rm F}_{[1]}({\rm Cu}(A))$ implies $s\leq t$.

\end{proposition}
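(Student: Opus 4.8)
The plan is to deduce the proposition from Theorem~\ref{QT FCu}, Remark~\ref{on QT} and the definition of $d_\tau$; no new analysis is needed, only the correct identifications, and I would prove ``(i)$\Leftrightarrow$(ii)'' and ``(ii)$\Leftrightarrow$(iii)'' separately. For (i)$\Leftrightarrow$(ii): since ${\rm \bf QT}_2(A)\subseteq QT_2(A)$, the hypothesis in (i) formally implies the hypothesis in (ii), giving (ii)$\Rightarrow$(i) at once. For the converse it is enough to see that the two hypotheses coincide, i.e.\ that $\{d_\tau:\tau\in QT_2(A)\}=\{d_\tau:\tau\in{\rm \bf QT}_2(A)\}$ as families of functionals on ${\rm Cu}(A)$. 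This follows from the fact that, $A$ being unital, a normalized $2$-quasitrace $\tau$ is bounded (by $\tau(1)=1$ on the unit ball), hence norm-continuous, hence lower semicontinuous, so that in fact $QT_2(A)={\rm \bf QT}_2(A)$; alternatively, without appealing to continuity of quasitraces, one checks that $d_\tau$ is a lower semicontinuous functional and that $\tau$ agrees on $A_+$ with the lower semicontinuous quasitrace $\tau_{d_\tau}$ of Remark~\ref{on QT}, so that $d_\tau$ already belongs to the right-hand family.

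For (ii)$\Leftrightarrow$(iii) I would invoke Theorem~\ref{QT FCu}: the map $\tau\mapsto\lambda_\tau$ is a bijection of ${\rm \bf QT}_2(A)$ onto ${\rm F}_{[1]}({\rm Cu}(A))$, and comparing the formula $\tau_\lambda(a)=\int_0^\infty\lambda(\langle(a-t)_+\rangle)\,dt$ of Remark~\ref{on QT} with $d_\tau(a)=\lim_n\tau(a^{1/n})$, together with property~(3) of Definition~\ref{d def}, shows that $\lambda_\tau=d_\tau$ on ${\rm Cu}(A)$ (equivalently $\tau=\tau_{d_\tau}$ on $A_+$). Now take $s,t\in{\rm Cu}(A)$ and write $s=\langle a\rangle$, $t=\langle b\rangle$ with $a,b\in(A\otimes\mathcal{K})_+$. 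Under this bijection the hypothesis of (iii) becomes precisely ``$d_\tau(a)<d_\tau(b)$ for all $\tau\in{\rm \bf QT}_2(A)$'', while the conclusions ``$s\le t$'' and ``$a\lesssim_{\rm Cu}b$'' are literally the same, so (ii) and (iii) are equivalent. The ``nonzero'' proviso in (i)--(ii) is harmless: if $b=0$ the hypothesis cannot hold, and if $a=0$ the conclusion is automatic, in agreement with the unrestricted formulation of (iii).

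I expect no serious obstacle here. The only point that needs genuine care is making precise the identification $\lambda_\tau=d_\tau$ and the claim that passing from ${\rm \bf QT}_2(A)$ to the larger set $QT_2(A)$ does not enlarge the family of dimension functions on ${\rm Cu}(A)$; once this is in place, the proposition is merely a restatement of the definition of strict comparison through the homeomorphism of Theorem~\ref{QT FCu}.
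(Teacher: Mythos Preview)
The paper states this proposition without proof, treating it as an immediate consequence of Theorem~\ref{QT FCu} and the surrounding definitions; your proposal correctly supplies the omitted details. Your identifications are the intended ones: for unital $A$ every normalized 2-quasitrace is bounded and hence norm-continuous (this is in \cite{BH}), so $QT_2(A)={\rm \bf QT}_2(A)$ and (i)$\Leftrightarrow$(ii) is immediate; and Theorem~\ref{QT FCu} together with $\lambda_\tau=d_\tau$ gives (ii)$\Leftrightarrow$(iii). There is nothing to add.
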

\begin{definition}\rm
Let $\Omega$ be a compact metric space. Denote by $\overline{\mathbb{N}}$ the set of natural numbers with 0 and $\infty$ adjoined. By \cite{R2013}, if
the covering dimension of $\Omega$ is at most two
and $\check{H}^2(K)=0$ (the $\check{\rm C}$ech cohomology with integer coefficients) for any compact subset $K\subset\Omega$, then the Cuntz semigroup of $C(\Omega)$ will consider more general cases is isomorphic to the ordered semigroup
 $\operatorname{Lsc}(\Omega, \overline{\mathbb{N}})$. If $\Omega$ is an interval or a graph without loops, the classification results lie in \cite{CE,CES}. Note that if $\Omega$ is a compact  subset of $\mathbb{C}$, then we have ${\rm Cu}(C(\Omega))\cong\operatorname{Lsc}(\Omega, \overline{\mathbb{N}})$.
\end{definition}

\begin{definition}\rm\label{measure trace}
  Let $\Omega\subset \mathbb{C}$ be a compact subset and let $O\subset \Omega$ be an open set. For $r>0$, denote $O_r=\{x\in \Omega\mid {\rm dist}(x,O)<r\}$. Let $f_O$  denote a positive function corresponding to $O$ as follows:
    $$
    f_O(x)=
    \begin{cases}
      \min\{1,{\rm dist}(x,\Omega\backslash O)\}, & \mbox{if } x\in O, \\
      0, & \mbox{otherwise}.
    \end{cases}
    $$
    Then $0\leq f_O\leq 1$ and support$(f_O)=O$.
    We may use $\mathds{1}_O$ to denote $\langle f_O\rangle$. Let $\alpha:{\rm Cu}(C(\Omega))\rightarrow {\rm Cu}(A)$ be a Cu-morphism with $\alpha(\mathds{1}_{\Omega})=\langle 1_A \rangle$, for any $\tau\in T(A)$, $d_{\tau}\circ \alpha$ defines a lower semicontinuous subadditive rank function on $C(\Omega)$, by \cite[Proposition I.2.1]{BH}, this function uniquely corresponds to a countably additive measure on $\Omega$, denoted by $\mu_{\alpha^*\tau}$, i.e., for any open set $O\subset \Omega$, we have
  $$
 \mu_{\alpha^*\tau}(O):=d_{\tau}(\alpha(\mathds{1}_O)).
  $$
\end{definition}

 The following result combines Corollary 4.6  and Corollary 4.7 in  \cite{BPT}.
\begin{proposition}\label{liftproj}
  Let $A$ be a separable, unital $C^*$-algebra with stable rank one. Suppose that $x\in W(A)$ satisfies $x\leq \langle 1_A\rangle$. Then there exists $a\in A_+$ such that $x=\langle a\rangle$. Moreover, if $x$ is compact, then $a$ can be chosen as a projection.
\end{proposition}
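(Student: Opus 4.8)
The plan is to reduce the statement to a lifting problem inside $A\otimes\mathcal K$, and then to exploit stable rank one in order to compress everything back down into $A$ itself. First I would observe that it suffices to prove the following: if $a\in(A\otimes\mathcal K)_+$ satisfies $\langle a\rangle\leq\langle 1_A\rangle$ in $\mathrm{Cu}(A)$, then there is $a'\in A_+$ with $\langle a'\rangle=\langle a\rangle$, and $a'$ can be taken to be a projection when $\langle a\rangle$ is compact. Indeed, an arbitrary $x\in W(A)$ with $x\leq\langle 1_A\rangle$ is represented by some $b\in M_n(A)_+\subseteq(A\otimes\mathcal K)_+$, and whether $x$ is compact is decided in $\mathrm{Cu}(A)$.

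For the realization by a positive element I would work in the Hilbert-module description of $\mathrm{Cu}$ from \cite{CEI}. Write $\langle a\rangle=\langle E\rangle$ for a countably generated Hilbert $A$-module $E$, and note that $\langle 1_A\rangle$ is the class of the module $A_A$. Since $A$ is separable with stable rank one, Cuntz domination is implemented by module inclusion, so $\langle E\rangle\leq\langle A_A\rangle$ forces $E$ to be isomorphic to a closed submodule $N\subseteq A_A$; this is precisely where stable rank one is used (see \cite{CEI}, and \cite{RW} for the related weak cancellation). Now $N$ is exactly a closed right ideal of $A$, so $B:=N\cap N^{*}$ is a hereditary $C^{*}$-subalgebra of $A$, which is $\sigma$-unital because $A$ is separable; hence $B=\overline{a'Aa'}$ for some $a'\in A_+$, and then $N=\overline{a'A}$. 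Therefore $\langle a'\rangle=\langle\overline{a'A}\rangle=\langle N\rangle=\langle E\rangle=\langle a\rangle$.

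An alternative, more hands-on route avoids modules. Since $A\otimes\mathcal K=\varinjlim M_n(A)$ again has stable rank one and $1_A$ (placed in the corner $1_A\otimes e_{11}$) generates a copy of $A$ as a hereditary subalgebra, R{\o}rdam's theorem on Cuntz subequivalence in stable rank one (\cite{R}) yields, for each $k$, a unitary $u_k$ with $c_k:=u_k(a-1/k)_+u_k^{*}\in A_+$; one then checks, using Lemma~\ref{R lem}, Definition~\ref{Cuaxiom} and the identity $(a-1/k)_+=\big((a-1/(k+1))_+-(1/k-1/(k+1))\big)_+$, that $\langle c_k\rangle\ll\langle c_{k+1}\rangle$ for all $k$ and $\sup_k\langle c_k\rangle=\langle a\rangle$, and finally assembles the $c_k$ into a single positive element of $A$ by a telescoping series construction. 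The construction stays inside $A$ precisely because $A$ is $\sigma$-unital, and carrying it out carefully — passing from the rapidly increasing sequence $(\langle c_k\rangle)$ to one honest positive element of $A$ whose class is the supremum — is the main technical obstacle; this is essentially the content of Corollary~4.6 of \cite{BPT}.

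For the ``moreover'' part, suppose $\langle a\rangle$ is compact, and let $a'\in A_+$ be as produced above. Then $\langle a'\rangle=\langle(a'-\varepsilon)_+\rangle$ for some $\varepsilon>0$, i.e. $a'\lesssim_{\rm Cu}(a'-\varepsilon)_+$. Applying R{\o}rdam's theorem to this self-subequivalence, together with cancellation of projections, one deduces $\overline{(a'-\varepsilon)_+A(a'-\varepsilon)_+}=\overline{(a'-\varepsilon/2)_+A(a'-\varepsilon/2)_+}$; this equality says that the spectrum of $a'$ has a gap in $(\varepsilon/2,\varepsilon]$, so the spectral projection $p:=\chi_{(\varepsilon,\infty)}(a')$ genuinely lies in $A$, and $\langle p\rangle=\langle(a'-\varepsilon)_+\rangle=\langle a\rangle$. (Equivalently, one may quote the known consequence of stable rank one that a compact element of $\mathrm{Cu}(A)$ is the class of a projection in $M_\infty(A)$, and then use $p\preceq 1_A$ together with Lemma~\ref{R lem}{\rm(iii)} to move such a projection inside $A$; this is Corollary~4.7 of \cite{BPT}.) I expect the assembly step in the third paragraph, rather than this last part, to be the hardest point.
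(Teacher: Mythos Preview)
The paper does not actually prove this proposition: it simply records it as a combination of Corollaries~4.6 and~4.7 of \cite{BPT} and moves on. Your proposal therefore goes well beyond the paper's treatment. Your second route (conjugating each $(a-1/k)_+$ into $A$ via R{\o}rdam's theorem and then assembling) together with the citation of \cite{BPT} for the projection statement is exactly what the paper is invoking, so on that level the approaches coincide.

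Your first route, via the Hilbert--module picture of \cite{CEI}, is a genuinely different and cleaner argument: the stable-rank-one implication ``$[E]\le [A_A]\Rightarrow E$ embeds in $A_A$'' immediately produces a closed right ideal $N\subseteq A$, and the standard ideal/hereditary correspondence plus separability give a strictly positive $a'\in A_+$ with $\overline{a'A}=N$, hence $\langle a'\rangle=x$. This bypasses the telescoping construction entirely and is arguably the more conceptual proof. One caution: in your spectral--gap argument for the projection part, the step ``R{\o}rdam plus cancellation of projections yields $\overline{(a'-\varepsilon)_+A(a'-\varepsilon)_+}=\overline{(a'-\varepsilon/2)_+A(a'-\varepsilon/2)_+}$'' is not quite justified as written---R{\o}rdam gives a unitary conjugacy into the smaller hereditary subalgebra, not equality of the two subalgebras---so you should either fill that in or, as you already suggest parenthetically, fall back on \cite[Corollary~4.7]{BPT} together with $p\preceq 1_A$ and cancellation to place the resulting projection inside $A$.
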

\begin{proposition}\label{com sum}
  Let $A$ be a separable, unital $C^*$-algebra with stable rank one and let $p$ be a projection in $A$. Suppose that $x_1,x_2,\cdots,x_n\in {\rm Cu}(A)$ are compact elements and satisfy
  $
  x_1+x_2+\cdots+x_n\leq \langle p\rangle.
  $
  Then there exist mutually orthogonal projections $p_1,\cdots,p_n$ such that $\langle p_i \rangle=x_i$ and
  $$
   p_1+p_2+\cdots+p_n\leq p.
  $$
\end{proposition}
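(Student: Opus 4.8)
The plan is to induct on $n$, at each stage splitting off the last summand $x_n$ as a subprojection of $p$ and then, crucially, cancelling $x_n$ from an inequality in ${\rm Cu}(A)$ by means of weak cancellation. The base case $n=1$ and the splitting step both rest on the following observation: if $x\in{\rm Cu}(A)$ is compact with $x\leq\langle p\rangle$, then there is a subprojection $p'\leq p$ with $\langle p'\rangle=x$. Indeed, $x$ is compact and $x\leq\langle p\rangle\leq\langle 1_A\rangle$, so Proposition \ref{liftproj} provides a projection $q\in A$ with $\langle q\rangle=x$; then $\langle q\rangle\leq\langle p\rangle$ gives $q\lesssim_{\rm Cu}p$, hence $q\preceq p$ by Lemma \ref{R lem}(iii), which by definition means $q$ is Murray--von Neumann equivalent to some $p'\leq p$, and then $\langle p'\rangle=\langle q\rangle=x$. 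This settles $n=1$.

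For the inductive step, assume the statement for $n-1$ and suppose $x_1+\cdots+x_n\leq\langle p\rangle$. Applying the observation to $x_n$ yields a subprojection $p_n\leq p$ with $\langle p_n\rangle=x_n$; then $p-p_n$ is a projection and $\langle p\rangle=\langle p_n\rangle+\langle p-p_n\rangle=x_n+\langle p-p_n\rangle$. The heart of the argument is the inequality
$$x_1+\cdots+x_{n-1}\leq\langle p-p_n\rangle .$$
We have $(x_1+\cdots+x_{n-1})+x_n\leq\langle p\rangle=\langle p-p_n\rangle+x_n$, and it remains to cancel the compact element $x_n$. For this, use axiom (O2) to write $x_1+\cdots+x_{n-1}=\sup_k s_k$ with $s_k\ll s_{k+1}$; since $x_n$ is compact we have $x_n\ll x_n$, so (O3) gives $s_k+x_n\ll(x_1+\cdots+x_{n-1})+x_n\leq\langle p-p_n\rangle+x_n$, and weak cancellation of ${\rm Cu}(A)$ (Definition \ref{wk cancel}, valid because $sr(A)=1$), applied with $z=z'=x_n$, yields $s_k\leq\langle p-p_n\rangle$ for every $k$; taking the supremum over $k$ proves the displayed inequality. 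Now apply the inductive hypothesis to the projection $p-p_n$: it furnishes mutually orthogonal projections $p_1,\ldots,p_{n-1}\leq p-p_n$ with $\langle p_i\rangle=x_i$ and $p_1+\cdots+p_{n-1}\leq p-p_n$. Since $p_n$ is orthogonal to $p-p_n$, it is orthogonal to each $p_i$, so $p_1,\ldots,p_n$ are mutually orthogonal and $p_1+\cdots+p_n\leq(p-p_n)+p_n=p$, completing the induction.

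I expect the cancellation of the compact summand $x_n$ to be the only genuine obstacle: weak cancellation as formulated requires the compact-containment relation $\ll$ on both sides of the inequality rather than the order relation $\leq$, so one must pass through an (O2)-approximation $s_k\ll s_{k+1}\ll\cdots$ of $x_1+\cdots+x_{n-1}$ and invoke the compatibility of addition with $\ll$ in (O3), with the compactness $x_n\ll x_n$ being precisely what licenses this manoeuvre. The remaining ingredients — lifting a compact Cuntz class to a projection, and turning a Cuntz subequivalence of projections into an honest subprojection — are standard and already supplied by Proposition \ref{liftproj} and Lemma \ref{R lem}(iii).
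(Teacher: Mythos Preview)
Your proof is correct and follows the same iterative skeleton as the paper: lift the compact Cuntz classes to projections via Proposition~\ref{liftproj}, embed one as a subprojection of $p$ using Lemma~\ref{R lem}(iii), cancel, and repeat. The only substantive difference is the cancellation tool. The paper translates $x_1+\cdots+x_n\leq\langle p\rangle$ into the Murray--von Neumann inequality $[q_1]+\cdots+[q_n]\leq[p]$ once and for all, and then invokes cancellation of projections (a direct consequence of $sr(A)=1$) at each step to get $[q_{i+1}]+\cdots+[q_n]\leq[p-p_1-\cdots-p_i]$. You instead stay inside $\mathrm{Cu}(A)$ and cancel the compact summand $x_n$ using weak cancellation, which forces the auxiliary (O2)/(O3) manoeuvre with the $s_k$'s. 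Both routes are valid; the paper's is a bit shorter because cancellation of projections applies cleanly to $\leq$ without needing to pass through $\ll$, whereas your argument has the virtue of making the role of compactness ($x_n\ll x_n$) completely explicit.
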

\begin{proof}
  By Proposition \ref{liftproj}, there exist projections $q_1,q_2,\cdots,q_n$ such that $\langle q_i\rangle=x_i$ for any $i$.
  Combining with Lemma \ref{R lem}, we have
  $$
  [q_1]+[q_2]+\cdots+[q_n]\leq [p].
  $$
  Since $A$ has cancellation of projections,  set $v_1=p$ and  $p_1=v_1^*q_1v_1$, then
  $$
  [q_2]+\cdots+[q_n]\leq [p-p_1].
  $$
  There exists a partial isometry $v_2$ such that $ v_2^*v_2=q_2$  and $v_2v_2^*\leq p-p_1$. Set $p_2= v_2v_2^*$ and continue doing this procedure, we obtain a collection of mutually orthogonal projections $\{p_i\}$ such that
  $$
  \langle p_i \rangle=\langle q_i\rangle=x_i,\quad i=1,2,\cdots,n
  $$
and
$$
   p_1+p_2+\cdots+p_n\leq p.
$$

\end{proof}

\begin{theorem}[\cite{CEI}, Corollary 5]\label{rr0sr1 aic}
If $A$ is a $C^{*}$-algebra with $rr(A)=0$, then {\rm Cu}$(A)$ is algebraic $($ i.e., every element is the supremum of an increasing sequence of compact elements$)$.
\end{theorem}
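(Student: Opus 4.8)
The plan is to invoke the Brown--Pedersen characterization of real rank zero: a $C^*$-algebra $B$ satisfies $rr(B)=0$ if and only if every hereditary sub-$C^*$-algebra of $B$ has an approximate unit consisting of projections. Since $rr(A)=0$ implies $rr(A\otimes\mathcal{K})=0$, and since every element of $\mathrm{Cu}(A)=(A\otimes\mathcal{K})_+/{\sim_{\mathrm{Cu}}}$ has the form $\langle a\rangle$ for some $a\in(A\otimes\mathcal{K})_+$, it is enough to fix such an $a$ and exhibit an increasing sequence of compact elements with supremum $\langle a\rangle$. I would take the hereditary subalgebra $H_a:=\overline{a(A\otimes\mathcal{K})a}$, which inherits real rank zero and is $\sigma$-unital (with strictly positive element $a$), and choose an increasing approximate unit $(p_n)_{n\in\mathbb{N}}$ for $H_a$ consisting of projections. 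The claim is then that $\langle a\rangle=\sup_n\langle p_n\rangle$ and that each $\langle p_n\rangle$ is compact; the sequence $(\langle p_n\rangle)$ is automatically increasing because $(p_n)$ is.

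For the equality, one inclusion is immediate: $p_n\in H_a$ gives $p_n\lesssim_{\mathrm{Cu}}a$, so $\langle p_n\rangle\leq\langle a\rangle$ and hence $\sup_n\langle p_n\rangle\leq\langle a\rangle$. For the reverse, since $(p_n)$ is an approximate unit for $H_a\ni a$ we have $p_nap_n\to a$, so given $\varepsilon>0$ we may pick $n$ with $\|p_nap_n-a\|<\varepsilon$ and apply Lemma~\ref{R lem}(ii) to get $(a-\varepsilon)_+\lesssim_{\mathrm{Cu}}p_nap_n$. Because $0\leq p_nap_n\leq\|a\|\,p_n$, we also have $p_nap_n\lesssim_{\mathrm{Cu}}p_n$, so $\langle(a-\varepsilon)_+\rangle\leq\langle p_n\rangle\leq\sup_m\langle p_m\rangle$. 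Since $\langle a\rangle=\sup_{k}\langle(a-\tfrac{1}{k})_+\rangle$ in $\mathrm{Cu}(A)$, letting $\varepsilon\to0$ yields $\langle a\rangle\leq\sup_m\langle p_m\rangle$. The point requiring care here is that the supremum need not be attained by any single $\langle p_m\rangle$, which is exactly why the comparison is routed through the elements $(a-\varepsilon)_+$ rather than by trying to dominate $a$ by one projection.

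For compactness of $\langle p_n\rangle$, I would use the standard fact (the one that verifies axiom (O2) for $\mathrm{Cu}(A)$) that $\langle(b-\varepsilon)_+\rangle\ll\langle b\rangle$ for every $b\in(A\otimes\mathcal{K})_+$ and every $\varepsilon>0$. Taking $b=p_n$: since the spectrum of a projection lies in $\{0,1\}$, functional calculus gives $(p_n-\varepsilon)_+=(1-\varepsilon)p_n$ for $0<\varepsilon<1$, and $(1-\varepsilon)p_n\sim_{\mathrm{Cu}}p_n$, so $\langle(p_n-\varepsilon)_+\rangle=\langle p_n\rangle$; hence $\langle p_n\rangle\ll\langle p_n\rangle$. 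This completes the argument. In truth there is no deep obstacle: the only genuinely analytic ingredient is the Brown--Pedersen hereditary-subalgebra description of real rank zero, which is precisely the tool that manufactures the required projections, and everything else is bookkeeping with the relations recorded in Lemma~\ref{R lem} together with the basic behaviour of suprema in $\mathrm{Cu}(A)$.
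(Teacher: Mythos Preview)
The paper does not supply a proof of this theorem: it is quoted verbatim as \cite[Corollary~5]{CEI} and used as a black box later (in the proof of Lemma~\ref{existhom}). There is therefore nothing in the paper to compare your argument against.

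That said, your argument is correct and is essentially the standard one. The only places worth double-checking are both fine: (i) the passage from $rr(A)=0$ to $rr(A\otimes\mathcal{K})=0$ is a well-known stability property of real rank zero; (ii) the existence of an \emph{increasing} approximate unit of projections in a $\sigma$-unital hereditary subalgebra of a real-rank-zero algebra is exactly the Brown--Pedersen result you cite. The remaining steps---$\langle p_n\rangle\le\langle a\rangle$ from $p_n\in H_a$, the cut-down estimate $(a-\varepsilon)_+\lesssim_{\mathrm{Cu}} p_n a p_n\lesssim_{\mathrm{Cu}} p_n$ via Lemma~\ref{R lem}(ii), and compactness of $\langle p_n\rangle$ from $(p_n-\varepsilon)_+\sim_{\mathrm{Cu}} p_n$---are all routine and valid as written.
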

\section{Distances between homomorphisms}

\begin{definition}\rm
  Let $A$ be a unital $C^*$-algebra and let $\Omega$ be a compact metric space. Denote $\operatorname{Hom}_1(C(\Omega), A)$ the set of all unital homomorphisms  from $C(\Omega)$ into $A$. 
Let $\phi,\psi:C(\Omega)\rightarrow A$ be two unital homomorphisms.
  Define the Cuntz distance between $\phi,\psi$ by
  $$
  d_W(\phi,\psi)=\inf\{\,r>0\,|\,\phi(f_O)\lesssim_{\rm Cu} \psi (f_{O_r}),
  \psi(f_O)\lesssim_{\rm Cu} \phi (f_{O_r}),\forall\,O\subset \Omega,\,{\rm open}\}.
  $$
  Write $\phi\sim\psi$ if $d_W(\phi,\psi)=0$. It is easy to see that ``$\sim$" is an equivalence relation. Put
$$
H_{c, 1}(C(\Omega), A)=\operatorname{Hom}_1(C(\Omega), A) / \sim.
$$
  \end{definition}
\begin{remark}
The definition of $d_W$ can be regarded as the symmetric version of the distance $D_c(\cdot,\cdot)$ defined in \cite{HL}. 
When $A$ is a unital simple $C^*$-algebra with stable rank one, 
$(H_{c,1}(C(\Omega), A), d_W)$ is a metric space; see \cite[Proposition 2.15]{HL}.
\end{remark}

\begin{definition}\rm\label{hom normal}
Let $\varphi\in {\rm Hom}_1(C(\Omega),A)$. Then ${\rm ker}\,\varphi=\{f\in C(\Omega)\,:\,f|_X=0\}$ for some compact subset $X\subset \Omega$. We call $X$ the spectrum of $\varphi$. We may also use $\varphi_X$ to denote $\varphi$. If $X\subset \mathbb{C}$, every homomorphism $\varphi_X: C(\Omega)\rightarrow A$ corresponds to a normal element $x=\varphi_X({\rm id}) \in A$, where ${\rm id}:X\rightarrow X$ is the identity function.

Conversely,  suppose that  $x,y$ are normal elements in $A$ with $\operatorname{sp}(x)=X$ and $\operatorname{sp}(y)=Y$. We can define $\varphi_X,\varphi_Y: C(X\cup Y)\rightarrow A$ to be two homomorphisms with $\varphi_X(f)=f(x)$ and $\varphi_Y(f)=f(y)$ for all $f\in C(X\cup Y)$. Define the Cuntz distance between normal elements as follows
$$
d_W(x,y):=d_W(\varphi_X,\varphi_Y).
$$
\end{definition}

\begin{definition}\rm \label{d Cu}
Let $\Omega$ be a compact metric space and let
$\alpha, \beta:{\rm Lsc}(\Omega, \overline{\mathbb{N}})\rightarrow {\rm Cu}(A)$ be two Cu-morphisms.  Define the Cuntz distance between $\alpha,\beta$ by
  $$
  d_{\rm Cu}(\alpha,\beta)=\inf\{\,r>0\,|\,\alpha(\mathds{1}_O)\leq \beta (\mathds{1}_{O_r}),\,\,
  \beta(\mathds{1}_O)\leq \alpha (\mathds{1}_{O_r}),\,\forall\,\,O\subset \Omega,\,{\rm open}\}.
  $$
   Denote ${\rm Cu}(C(\Omega), A)$ the set of all Cu-morphisms from ${\rm Cu}(C(\Omega))$ to ${\rm Cu}(A)$.
  \end{definition}
  \begin{remark}\rm \label{d Cu pro}
  For any $\alpha,\beta,\gamma\in {\rm Cu}(C(\Omega), A)$ and $\phi,\psi\in{\rm Hom}_1(C(\Omega),A)$, the following properties hold:

  (i) $ d_{\rm Cu}(\alpha,\beta)=d_{\rm Cu}(\beta,\alpha);$

  (ii)  $d_{\rm Cu}(\alpha,\beta)\leq  d_{\rm Cu}(\alpha,\gamma)+ d_{\rm Cu}(\beta,\gamma);$

  (iii)  $d_W(\phi,\psi)=d_{\rm Cu}({\rm Cu}(\phi),{\rm Cu}(\psi))$.
\end{remark}

\begin{proposition}\label{Cumetric}
 Let $\Omega$ be a compact subset of $\mathbb{C}$. Then $d_{\rm Cu}$ is a metric on the Cuntz category morphisms from ${\rm Cu}(C(\Omega))$ to ${\rm Cu}(A)$.
\end{proposition}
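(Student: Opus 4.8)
The plan is to verify the three metric axioms, the first two being essentially formal and the third — that $d_{\rm Cu}(\alpha,\beta)=0$ implies $\alpha=\beta$ — being the substantive point. Symmetry is immediate from the symmetric form of the defining condition. For the triangle inequality, given $r>d_{\rm Cu}(\alpha,\gamma)$ and $s>d_{\rm Cu}(\gamma,\beta)$, one uses the containment of open sets $(O_r)_s\subset O_{r+s}$ together with monotonicity of $\alpha,\beta,\gamma$ to chain the inequalities $\alpha(\mathds{1}_O)\le\gamma(\mathds{1}_{O_r})\le\beta(\mathds{1}_{(O_r)_s})\le\beta(\mathds{1}_{O_{r+s}})$ and symmetrically; taking the infimum over such $r,s$ gives $d_{\rm Cu}(\alpha,\beta)\le d_{\rm Cu}(\alpha,\gamma)+d_{\rm Cu}(\gamma,\beta)$. (These are exactly the assertions recorded in Remark \ref{d Cu pro}(i)--(ii), so strictly one only needs them here; I would still spell out the open-set inclusion $(O_r)_s\subseteq O_{r+s}$, which holds because ${\rm dist}(x,O)\le{\rm dist}(x,O_r)+r$ is false in general but ${\rm dist}(x,O)<r+s$ does follow from ${\rm dist}(x,O_r)<s$ since any point within $s$ of $O_r$ is within $r+s$ of $O$.)

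The heart of the matter is that $d_{\rm Cu}(\alpha,\beta)=0$ forces $\alpha=\beta$. Suppose $d_{\rm Cu}(\alpha,\beta)=0$; then for every open $O\subset\Omega$ and every $r>0$ we have $\alpha(\mathds{1}_O)\le\beta(\mathds{1}_{O_r})$ and $\beta(\mathds{1}_O)\le\alpha(\mathds{1}_{O_r})$. I would first show $\alpha(\mathds{1}_O)\le\beta(\mathds{1}_O)$ for every open $O$, and then bootstrap to all of ${\rm Cu}(C(\Omega))\cong{\rm Lsc}(\Omega,\overline{\mathbb N})$. The key observation is that $\mathds{1}_{O}=\sup_{n}\mathds{1}_{O^{(n)}}$ where $O^{(n)}=\{x\in O:{\rm dist}(x,\Omega\setminus O)>1/n\}$ is an increasing sequence of open sets with $\overline{O^{(n)}}\subset O$ and, crucially, $(O^{(n)})_{1/n}\subset O$ (a point within $1/n$ of $O^{(n)}$ still lies in $O$). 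Hence, using $d_{\rm Cu}(\alpha,\beta)=0$ with $r=1/n$ applied to $O^{(n)}$,
$$
\alpha(\mathds{1}_{O^{(n)}})\le\beta(\mathds{1}_{(O^{(n)})_{1/n}})\le\beta(\mathds{1}_O).
$$
Since $f_{O^{(n)}}\nearrow$ in a way that makes $\langle f_{O^{(n)}}\rangle$ an increasing sequence with supremum $\langle f_O\rangle=\mathds{1}_O$ (this uses that $\langle f_O\rangle=\sup_n\langle(f_O-1/n)_+\rangle$ together with the fact that $(f_O-1/n)_+$ is supported on $O^{(n)}$ and dominated by $f_{O^{(n)}}$, combined with Lemma \ref{R lem}(i)--(ii)), and since $\alpha$ preserves suprema of increasing sequences, we get $\alpha(\mathds{1}_O)=\sup_n\alpha(\mathds{1}_{O^{(n)}})\le\beta(\mathds{1}_O)$. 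By symmetry $\beta(\mathds{1}_O)\le\alpha(\mathds{1}_O)$, so $\alpha$ and $\beta$ agree on all $\mathds{1}_O$.

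Finally I would promote this to all of ${\rm Cu}(C(\Omega))$. Every $f\in{\rm Lsc}(\Omega,\overline{\mathbb N})$ is a supremum of an increasing sequence of functions of the form $\sum_{i=1}^{k}\mathds{1}_{O_i}$ with $O_i$ open (indeed, writing $f=\sup_m f_m$ with $f_m$ the truncations, each $f_m$ is a finite sum $\sum_{j\ge1}\mathds{1}_{\{f>j\}}$ and $\{f>j\}$ is open by lower semicontinuity); since a Cu-morphism preserves finite sums and suprema of increasing sequences, and since $\alpha(\mathds{1}_{O_i})=\beta(\mathds{1}_{O_i})$ for each $i$, we conclude $\alpha=\beta$ on ${\rm Cu}(C(\Omega))$. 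The one place demanding care — the step I expect to be the main obstacle — is justifying that $\langle f_O\rangle$ is the supremum of the increasing sequence $\langle f_{O^{(n)}}\rangle$ in ${\rm Cu}(C(\Omega))$ (equivalently, identifying the way-below structure of ${\rm Lsc}(\Omega,\overline{\mathbb N})$ with the indicator picture); this is where one must use the concrete description of open supports and the approximation Lemma \ref{R lem}, and it is the only point that genuinely uses $\Omega\subset\mathbb C$ so that ${\rm Cu}(C(\Omega))\cong{\rm Lsc}(\Omega,\overline{\mathbb N})$. Everything else is soft.
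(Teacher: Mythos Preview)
Your proof is correct and follows essentially the same route as the paper: reduce to showing $\alpha(\mathds{1}_O)=\beta(\mathds{1}_O)$ for every open $O$, exhaust $O$ by an increasing sequence of open sets whose small neighbourhoods stay inside $O$, and pass to the supremum. The paper phrases the last step as an interleaving $\alpha(\mathds{1}_{O_1})\le\beta(\mathds{1}_{O_2})\le\alpha(\mathds{1}_{O_3})\le\cdots$ rather than bounding each $\alpha(\mathds{1}_{O^{(n)}})$ directly by $\beta(\mathds{1}_O)$, but this is cosmetic. One small slip: your parenthetical remark that ${\rm dist}(x,O)\le{\rm dist}(x,O_r)+r$ ``is false in general'' is mistaken --- that inequality does hold by the triangle inequality --- but your subsequent argument for $(O_r)_s\subset O_{r+s}$ is correct regardless, so this does not affect the proof.
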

\begin{proof}
 Let us identify  ${\rm Cu}(C(\Omega))$ with the semigroup of lower semicontinuous functions ${\rm Lsc}(\Omega, \overline{\mathbb{N}})$.  Suppose that  $d_{\rm Cu}(\alpha,\beta)=0$, we need only to show that $\alpha$ and $\beta$ agree on the functions $\mathds{1}_{O}$ for any open set $O\subset \Omega$ (their overall equality is apparent through the additivity and preservation of suprema of increasing sequences).

 For any open set $O\subset \Omega$, there exists a sequence of open subsets $O_n$ such that $\sup_n \mathds{1}_{O_n}=\mathds{1}_{O}$ and $\overline{O_n}\subset O_{n+1}$ for any $n$. Since $O_n$ is bounded, there exists $r_n>0$ such that $(O_n)_{r_n}\subset O_{n+1}$, by the definition of $d_{\rm Cu}$, we have $\alpha(\mathds{1}_{O_n})\leq\beta(\mathds{1}_{O_{n+1}})$ and
 $\beta(\mathds{1}_{O_n})\leq\alpha(\mathds{1}_{O_{n+1}})$.

 Then we have
 $$
 \alpha(\mathds{1}_{O_{1}})\leq\beta(\mathds{1}_{O_{2}})\leq \cdots \leq
 \alpha(\mathds{1}_{O_{{2n-1}}})\leq\beta(\mathds{1}_{O_{{2n}}})\leq\cdots .
 $$
 Note that
 $$
 \sup_n \alpha (\mathds{1}_{O_{2n-1}})=\alpha (\mathds{1}_{O}),\,\,\sup_n \beta (\mathds{1}_{O_{2n}})=\beta (\mathds{1}_{O}),
 $$
 which implies $\alpha (\mathds{1}_{O})=\beta (\mathds{1}_{O})$, as desired.

\end{proof}

We will now present  a  version of the Marriage Lemma.

\begin{proposition} \label{pairprop}
  Let $\alpha_1,\cdots,\alpha_n,\beta_1,\cdots,\beta_n\in {\rm Cu}(C(\Omega), A)$. Then
  $$
  d_W(\sum_{i=1}^{n}\alpha_i,\sum_{i=1}^{n}\beta_i)\leq \min_{\sigma\in S_n}\max_{1\leq i\leq n}d_W(\alpha_i,\beta_{\sigma(i)}),
  $$
  where $S_n$ is the set of all permutations of $(1,2,\cdots,n)$.
\end{proposition}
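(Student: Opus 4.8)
The plan is to reduce the statement to the classical Marriage (Hall) Lemma combined with the triangle inequality for $d_W$ (equivalently $d_{\mathrm{Cu}}$) established in Remark \ref{d Cu pro}. Fix $r > \min_{\sigma\in S_n}\max_i d_W(\alpha_i,\beta_{\sigma(i)})$, and pick a permutation $\sigma$ realizing the minimum, so that $d_W(\alpha_i,\beta_{\sigma(i)}) < r$ for every $i$. After relabelling the $\beta_j$ by $\sigma$, we may assume $\sigma = \mathrm{id}$, i.e. $d_W(\alpha_i,\beta_i) < r$ for all $i$. The point is then purely formal: for any open $O\subset\Omega$ and each $i$ we have $\alpha_i(\mathds{1}_O)\leq\beta_i(\mathds{1}_{O_r})$ and $\beta_i(\mathds{1}_O)\leq\alpha_i(\mathds{1}_{O_r})$; summing over $i$ and using that both $\sum\alpha_i$ and $\sum\beta_i$ are evaluated on the same $\mathds{1}_O$ (and that addition in $\mathrm{Cu}(A)$ respects $\leq$), we get $\sum_i\alpha_i(\mathds{1}_O)\leq\sum_i\beta_i(\mathds{1}_{O_r})$ and symmetrically, hence $d_W(\sum\alpha_i,\sum\beta_i)\leq r$. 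Letting $r$ decrease to the minimum gives the inequality.

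First I would state carefully that $\sum_{i=1}^n\alpha_i$ is again a Cu-morphism from $\mathrm{Cu}(C(\Omega))$ to $\mathrm{Cu}(A)$ — it is the pointwise sum, which preserves order, suprema of increasing sequences, and compact containment because addition in a Cu-semigroup is compatible with all of these (axioms (O3), (O4)). Then I would observe that the definition of $d_{\mathrm{Cu}}$ (Definition \ref{d Cu}), applied to $\sum\alpha_i$ and $\sum\beta_i$, only involves the inequalities on the generators $\mathds{1}_O$, which is exactly what the per-index inequalities deliver after summation. The appearance of $d_W$ rather than $d_{\mathrm{Cu}}$ in the statement is harmonized by Remark \ref{d Cu pro}(iii): for Cu-morphisms I read $d_W$ as $d_{\mathrm{Cu}}$, or, if the $\alpha_i,\beta_i$ are taken as coming from homomorphisms, I pass through $\mathrm{Cu}(\cdot)$.

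Honestly, there is no serious obstacle here; the ``Marriage Lemma'' flavour is entirely in the choice of the optimal permutation $\sigma$, and once that choice is made the rest is a one-line summation argument. The only mild care needed is the handling of the infimum defining $d_W$: since the infimum need not be attained, one should argue with an arbitrary $r$ strictly larger than $\min_\sigma\max_i d_W(\alpha_i,\beta_{\sigma(i)})$, for which each $d_W(\alpha_i,\beta_{\sigma(i)}) < r$ genuinely holds and hence the defining inequalities for $d_W(\alpha_i,\beta_{\sigma(i)})$ at radius $r$ are valid, and then take the infimum over such $r$ at the end. One should also note the monotonicity $O\subset O'\Rightarrow\mathds{1}_O\leq\mathds{1}_{O'}$ and $(O_r)\subset(O_{r'})$ for $r\leq r'$, which is used implicitly but is immediate from Definition \ref{measure trace}. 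I would keep the write-up to a short paragraph: fix $r$, choose $\sigma$, relabel, sum the two families of inequalities over $i$, conclude $d_W(\sum\alpha_i,\sum\beta_i)\leq r$, and let $r\downarrow\min_\sigma\max_i d_W(\alpha_i,\beta_{\sigma(i)})$.
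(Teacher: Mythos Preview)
Your proposal is correct and follows essentially the same route as the paper's own proof: pick $r>d:=\min_{\sigma}\max_i d_W(\alpha_i,\beta_{\sigma(i)})$ (the paper writes $r=d+\varepsilon$), choose a permutation $\sigma$ witnessing the minimum, sum the per-index inequalities $\alpha_i(\mathds{1}_O)\leq\beta_{\sigma(i)}(\mathds{1}_{O_r})$ and their symmetric counterparts over $i$, and let $r\downarrow d$. Your additional remarks (that $\sum_i\alpha_i$ is again a Cu-morphism, and the $d_W$ versus $d_{\mathrm{Cu}}$ identification) are harmless elaborations of points the paper leaves implicit; no genuine Hall Marriage argument is needed, as you yourself note.
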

\begin{proof}
Let $d=\mathop{\min}\limits_{\sigma\in S_n}\mathop{\max}\limits_{1\leq i\leq n}d_W(\alpha_i,\beta_{\sigma(i)}).$ Then any $\varepsilon>0$, there exists $\sigma\in S_n$ such that
$$
d_W(\alpha_i,\beta_{\sigma(i)})<d+\varepsilon,\quad i=1,2,\cdots,n.
$$
For any open set $O\subset \Omega$, we get
$$
\alpha_i(\mathds{1}_O)\leq \beta_{\sigma(i)} (\mathds{1}_{O_{d+\varepsilon}}),\,\,
  \beta_{\sigma(i)}(\mathds{1}_O)\leq \alpha_i (\mathds{1}_{O_{d+\varepsilon}}),\quad i=1,2,\cdots,n.
$$
Then we have
$$
\sum_{i=1}^{n}\alpha_i(\mathds{1}_O)\leq \sum_{i=1}^{n}\beta_{i} (\mathds{1}_{O_{d+\varepsilon}}),\,\,
  \sum_{i=1}^{n}\beta_{i}(\mathds{1}_O)\leq \sum_{i=1}^{n}\alpha_i (\mathds{1}_{O_{d+\varepsilon}}).
$$
Hence,
  $$
  d_W(\sum_{i=1}^{n}\alpha_i,\sum_{i=1}^{n}\beta_i)\leq d+\varepsilon.
  $$
Since $\varepsilon$ is arbitrary, the conclusion is true.

\end{proof}

\begin{definition}\rm
 Let $A$ be a unital $C^*$-algebra and $\Omega$ be a compact metric space.
 Let $x,y\in A$ be normal elements and $\phi,\psi:C(\Omega)\rightarrow A$ be two homomorphisms. We say $\phi$ and $\psi$ are approximately unitarily equivalent if there exists a sequence of unitaries $u_n\in A$ such that $u
 _n\phi u_n^*\rightarrow \psi$ pointwise. Define the  distance between unitary orbits of $x$ and $y$ by $$
 d_U(x,y)=\inf\{\|uxu^*-y\|\,:\,u\,\,{\rm is \,\, a \,\,unitary\,\, in } \,\,A \}.
 $$
 Define the distance between $\phi,\psi$ by
 $$
 d_{U}(\phi,\psi)=\inf\{ \varepsilon>0\mid {\rm for\, every\,finite\,}  F \subset C(\Omega),\,{\rm  there\,exists}\,\,u\in \mathcal{U}(A)\qquad $$$$\qquad\,{\rm such \,that}
 \max_{f\in F}\| u\phi(f) u^*-\psi(f)\|<\varepsilon \}.
 $$
 Then  $d_{U}(\phi,\psi)=0$ if and only if $\phi$ and $\psi$ are approximately unitarily equivalent.

\end{definition}
\begin{remark}
Suppose that $X={\rm sp}(x)$  and $Y={\rm sp}(y)$.  If $X\cup Y\subset\Omega\subset\mathbb{C}$, let $\varphi_X$ and $\varphi_Y$ be the corresponding homomorphisms in \ref{hom normal}. In general, we don't have
 $$
 d_U(\varphi_X,\varphi_Y)\neq d_U(x,y).
 $$
\end{remark}

\begin{lemma}\label{app N}
  Let $\{x_n\}$ be a sequence of normal elements in $A$ with limit $x$. Suppose that $\Omega$ is a compact subset of $\mathbb{C}$ satisfies ${\rm sp}(x_n)\subset \Omega$. Then for any finite set $F\subset C(\Omega)$ and $\varepsilon>0$, there exists $N\in \mathbb{N}$ such that $\|f(x_n)-f(x)\|<\varepsilon$ for all $f\in F$ and $n\geq N$.
\end{lemma}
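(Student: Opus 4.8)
The plan is to reduce everything to the case of $*$-polynomials in the two commuting variables $z,\bar z$ via the Stone--Weierstrass theorem, and then invoke joint norm-continuity of the algebraic operations; for this to make sense one first has to check that the functional calculus at the limit element is legitimate.

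So the first step is to observe that $x$ is itself normal and that ${\rm sp}(x)\subset\Omega$. Normality is immediate: $x^*x=\lim_n x_n^*x_n=\lim_n x_nx_n^*=xx^*$. For the spectrum, suppose $\lambda\notin\Omega$ and set $\delta={\rm dist}(\lambda,\Omega)>0$. Each $x_n$ is normal with ${\rm sp}(x_n)\subset\Omega$, so $x_n-\lambda$ is invertible with $\|(x_n-\lambda)^{-1}\|={\rm dist}(\lambda,{\rm sp}(x_n))^{-1}\le\delta^{-1}$; since $x_n-\lambda\to x-\lambda$ in norm, for $n$ large $\|(x-\lambda)-(x_n-\lambda)\|\,\|(x_n-\lambda)^{-1}\|<1$, and hence $x-\lambda$ is invertible, i.e.\ $\lambda\notin{\rm sp}(x)$. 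Thus ${\rm sp}(x)\subset\Omega$, so $f(x)$ is defined for every $f\in C(\Omega)$.

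Next I would treat polynomials: if $p=p(z,\bar z)$, then by the normal functional calculus $p(x_n)=p(x_n,x_n^*)$ and $p(x)=p(x,x^*)$, and since $x_n\to x$ (hence $x_n^*\to x^*$) and addition and multiplication are norm-continuous, $p(x_n)\to p(x)$. For general $f\in C(\Omega)$, fix $\varepsilon>0$ and a finite $F\subset C(\Omega)$; by Stone--Weierstrass choose, for each $f\in F$, a $*$-polynomial $p_f$ with $\|f-p_f\|_{C(\Omega)}<\varepsilon/3$. Because $x_n$ and $x$ are normal with spectra in $\Omega$, $\|g(x_n)\|=\sup_{{\rm sp}(x_n)}|g|\le\|g\|_{C(\Omega)}$ and likewise for $x$, so $\|f(x_n)-p_f(x_n)\|<\varepsilon/3$ and $\|f(x)-p_f(x)\|<\varepsilon/3$ for all $n$. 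Applying the polynomial case to the finitely many $p_f$ yields $N$ with $\|p_f(x_n)-p_f(x)\|<\varepsilon/3$ for all $n\ge N$ and all $f\in F$, and the triangle inequality gives $\|f(x_n)-f(x)\|<\varepsilon$ for all $n\ge N$, $f\in F$.

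This lemma is elementary, and the only step that calls for a little care is the spectral containment ${\rm sp}(x)\subset\Omega$, which is what makes the functional calculus at $x$ meaningful; everything else is Stone--Weierstrass together with continuity of the $C^*$-algebra operations.
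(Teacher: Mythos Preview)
Your proof is correct and follows essentially the same route as the paper: reduce to $*$-polynomials via Stone--Weierstrass, then use norm-continuity of the algebraic operations (the paper spells this out with an explicit inductive bound on $\|(x_n^*)^s x_n^t-(x^*)^s x^t\|$, whereas you simply invoke continuity). In fact you are more careful than the paper in one respect: you explicitly verify that $x$ is normal with ${\rm sp}(x)\subset\Omega$, a point the paper's proof uses but does not justify.
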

\begin{proof}
  We may suppose that $\|x_n\|\leq M$ for all $n$, we also have $\|x\|\leq M$. For any $f\in C(\Omega)$ and $\varepsilon>0$, by the Stone-Weierstrass theorem, there exists a polynomial $P(z,\bar{z})$ such that
  $$
  \|f-P(z,\bar{z})\|<\frac{\varepsilon}{3}.
  $$
  Note that
  \begin{eqnarray*}
     \|(x_n^*)^s x_n^t-(x^*)^s x^t\| &\leq & \|(x_n^*)^s x_n^t-x_n(x^*)^{s-1} x^t\|+ \|x_n(x^*)^{s-1} x^t-(x^*)^s x^t\|\\
     &\leq & M\|(x_n^*)^{s-1} x_n^t-(x^*)^{s-1} x^t\|+ M^{s+t}\|x_n-x\|.
  \end{eqnarray*}
By induction, we have
$$
\|(x_n^*)^s x_n^t-(x^*)^s x^t\|\leq  (s+t)M^{s+t}\|x_n-x\|.
$$
Therefore, there exists $N_f$ such that $\|x_n-x\|$ is sufficiently small for all $n\geq N_f$, we will have
$$
\|P(x_n,x_n^*)-P(x,x^*)\|<\frac{\varepsilon}{3}.
$$
Now we have
  \begin{eqnarray*}
     \|f(x_n)-f(x)\| &\leq & \|f(x_n)-P(x_n,x_n^*)\|+ \|P(x_n,x_n^*)-P(x,x^*)\|\\
       & & +\|P(x,x^*)-f(x)\|\\
     &< & \frac{\varepsilon}{3}+\frac{\varepsilon}{3}+\frac{\varepsilon}{3}=\varepsilon.
  \end{eqnarray*}
Since $F$ is finite, let $N=\max\{N_f\mid f\in F\}$, as desired.
\end{proof}

\begin{definition}\rm
Let $A$ be a $C^*$-algebra and let $x$ and $y$ be normal elements in $A$.
Let us say that $x$ and $y$ have $the$ $same$ $index$, written ${\rm ind}(x)={\rm ind}(y)$ if
$$
[\lambda-x]=[\lambda-y] \text { in } K_1(A)
$$
for all $\lambda\notin {\rm sp}(x)\cup {\rm sp}(y)$. 
\end{definition}

The following theorem shows the relation between $d_W(x,y)$ and $d_U(x,y)$, see Corollary 6.4 and Theorem 6.7 in \cite{HL}.
\begin{theorem}\label{HL K1}
Let $A$ be a unital simple separable $C^*$-algebra with real rank zero, stable rank one and with weakly unperforated $K_0(A)$. Suppose that $x$ and $y$ are two normal elements in $A$ with ${\rm ind}(x)={\rm ind}(y)$,
then
$$
d_U(x,y) \leq 2d_W(x, y).
$$
\end{theorem}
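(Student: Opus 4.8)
The plan is to deduce the estimate from a combinatorial matching argument applied to the spectral projections of $x$ and $y$, after first replacing both elements by normal elements with finite spectrum. Throughout I would use the elementary facts that, for normal elements $x,x'$, one has $\|x-x'\|<\delta\Rightarrow d_W(x,x')\le\delta$ (the spectra are $\delta$-close in Hausdorff distance, and $f_O(x)\lesssim_{\rm Cu}f_{O_\delta}(x')$ after comparing spectral supports), and that ${\rm ind}$ is unaffected by perturbations that are small relative to the distance to the spectrum. So, given $\varepsilon>0$, choose $\delta\ll\varepsilon$ and, using $rr(A)=0$, produce normal elements $x_1=\sum_i\lambda_i p_i$ and $y_1=\sum_j\mu_j q_j$ with $\|x-x_1\|<\delta$, $\|y-y_1\|<\delta$, where $\{p_i\}$ and $\{q_j\}$ are finite orthogonal families of projections summing to $1_A$ and the $\lambda_i,\mu_j$ lie on a common grid of mesh $<\delta$. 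The grid can be chosen so that ${\rm ind}(x_1)={\rm ind}(x)$ and ${\rm ind}(y_1)={\rm ind}(y)$, and by the triangle inequality it then suffices to bound $d_U(x_1,y_1)$, while $d_W(x_1,y_1)\le d_W(x,y)+2\delta$.

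Next, fix $r>d_W(x_1,y_1)$. Evaluating the definition of $d_W$ on open sets of the form $O=\bigcup_{i\in S}B(\lambda_i,\eta)$ for small $\eta$, and using that $\langle f_O(x_1)\rangle=\sum_{\lambda_i\in O}\langle p_i\rangle$ together with Lemma \ref{R lem}(iii) to pass from Cuntz subequivalence of projections to Murray--von Neumann subequivalence, one gets, for every index set $S$, that $\sum_{i\in S}p_i\preceq\sum_{j\in N(S)}q_j$ with $N(S)=\{\,j:{\rm dist}(\mu_j,\{\lambda_i\}_{i\in S})<r\,\}$, together with the symmetric inequality with the roles of the $p$'s and $q$'s interchanged. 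These are exactly the hypotheses of a marriage (transportation) lemma, now to be solved inside $V(A)$ rather than in a set: using the comparison and refinement properties available for a simple $C^*$-algebra of real rank zero and stable rank one with weakly unperforated $K_0$, one produces orthogonal decompositions $p_i=\sum_j p_{ij}$ and $q_j=\sum_i q_{ij}$ with $[p_{ij}]=[q_{ij}]$ in $K_0(A)$ for all $i,j$ and with $p_{ij}=q_{ij}=0$ whenever $|\lambda_i-\mu_j|$ exceeds a fixed multiple of $r$.

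The index hypothesis enters in making this matched decomposition compatible with $K_1$: around each hole $\lambda$ of the combined spectrum the class $[\lambda-x_1]=[\lambda-y_1]$ in $K_1(A)$ must be carried consistently by the pieces winding around $\lambda$, and the equality ${\rm ind}(x_1)={\rm ind}(y_1)$ is precisely what removes the obstruction (conversely it is necessary, since ${\rm ind}(ux_1u^*)={\rm ind}(x_1)$ for every unitary $u$). Granting the matched decomposition, pick partial isometries $w_{ij}$ with $w_{ij}^*w_{ij}=p_{ij}$ and $w_{ij}w_{ij}^*=q_{ij}$, and set $u=\sum_{i,j}w_{ij}$; since $\sum_{ij}p_{ij}=\sum_{ij}q_{ij}=1_A$, the element $u$ is a unitary, $ux_1u^*=\sum_{ij}\lambda_i q_{ij}$, and hence $\|ux_1u^*-y_1\|=\max\{\,|\lambda_i-\mu_j|:q_{ij}\ne 0\,\}$, which is bounded by a fixed multiple of $r$. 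Tracking the constants through the grid approximation and through the two directions of the Cuntz comparison yields $d_U(x_1,y_1)\le 2\,d_W(x_1,y_1)+O(\delta)$, and letting $\varepsilon\to 0$ gives $d_U(x,y)\le 2\,d_W(x,y)$.

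I expect the main obstacle to be the second and third steps taken together: the matching must be solved not merely at the level of traces or dimensions — where strict comparison would suffice — but exactly in $K_0(A)$, and simultaneously in a way that redistributes the $K_1$/index data correctly around the holes of the spectrum. Arranging a single decomposition $\{p_{ij}\}$ compatible with all of this at once (rather than just with the order) is where the real work lies, and it is there that the hypothesis ${\rm ind}(x)={\rm ind}(y)$ is genuinely needed; extracting the clean constant $2$ is then a bookkeeping matter of balancing the mesh of the grid against the two-sided Cuntz comparison.
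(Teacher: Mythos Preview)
The paper does not prove this theorem at all: it is quoted as a black box from Hu and Lin \cite{HL} (specifically their Corollary~6.4 and Theorem~6.7), as indicated in the sentence immediately preceding the statement. There is therefore no proof in the present paper to compare your proposal against.

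That said, your outline is in the right spirit---approximation by finite-spectrum normals using real rank zero, followed by a marriage/transportation argument on the spectral projections, then assembling a unitary---and this is broadly the architecture of the Hu--Lin argument. But your proposal is a sketch, not a proof: you correctly flag the main obstacle yourself, namely producing a matched decomposition $\{p_{ij}\},\{q_{ij}\}$ with $[p_{ij}]=[q_{ij}]$ in $K_0(A)$ that is simultaneously compatible with the $K_1$/index data around each hole of the spectrum, and you do not supply that construction. In \cite{HL} this step is the bulk of the work and occupies a substantial technical development; the clean constant $2$ also comes out of a careful version of the matching rather than from generic bookkeeping. A minor additional point: approximating a normal element in a real rank zero algebra by one with finite spectrum while controlling both the norm error \emph{and} the index classes $[\lambda-x]$ near the approximating spectrum is itself nontrivial and deserves justification.
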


\begin{theorem}\label{dU 0 dW}
  Let $A$ be a unital simple separable $C^*$-algebra with real rank zero, stable rank one, weakly unperforated $K_0(A)$. Let $\Omega$ be a compact subset of $\mathbb{C}$. Suppose that $x_1,\cdots,x_n,x$ are normal elements in $A$ with ${\rm sp}(x_i)\subset \Omega$, $(1\leq i\leq n)$ and $\phi,\psi:C(\Omega)\rightarrow A$ are two unital homomorphsims. Then

 {\rm  (1)} if  $d_U(x_n,x)\rightarrow 0$, then $d_W(x_n,x)\rightarrow 0$.

 {\rm  (2)} if $d_W(\phi,\psi)=0$ and ${\rm  ind}(\phi({\rm id}))={\rm  ind}(\psi({\rm id}))$, then $d_U(\phi,\psi)=0$.
\end{theorem}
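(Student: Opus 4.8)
\noindent\emph{Proof strategy.} I would treat the two parts separately, since (2) follows quickly from the results already assembled while (1) is where the real work lies. For (2), put $x=\phi({\rm id})$ and $y=\psi({\rm id})$; these are normal with ${\rm sp}(x),{\rm sp}(y)\subseteq\Omega$. By Remark \ref{d Cu pro}(iii) together with Proposition \ref{Cumetric}, $d_W(\phi,\psi)=0$ is the same as ${\rm Cu}(\phi)={\rm Cu}(\psi)$. Writing $\phi=\varphi_X\circ\rho$ and $\psi=\varphi_Y\circ\rho$ with $\rho:C(\Omega)\to C({\rm sp}(x)\cup{\rm sp}(y))$ the restriction map --- which is surjective and induces a surjection on Cuntz semigroups, both being identified with ${\rm Lsc}(\,\cdot\,,\overline{\mathbb N})$ and ${\rm Cu}(\rho)$ with restriction of lower semicontinuous functions --- we get ${\rm Cu}(\varphi_X)={\rm Cu}(\varphi_Y)$, i.e.\ $d_W(x,y)=0$. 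Since ${\rm ind}(x)={\rm ind}(y)$, Theorem \ref{HL K1} gives $d_U(x,y)\le 2\,d_W(x,y)=0$, so there are unitaries $u_k\in A$ with $\|u_kxu_k^*-y\|\to 0$. Then $u_k\phi(f)u_k^*=f(u_kxu_k^*)$, the normal elements $u_kxu_k^*$ have spectrum ${\rm sp}(x)\subseteq\Omega$ and converge to $y$, so Lemma \ref{app N}, applied to a prescribed finite $F\subseteq C(\Omega)$, furnishes a $k$ with $\max_{f\in F}\|u_k\phi(f)u_k^*-\psi(f)\|<\varepsilon$. As $F$ and $\varepsilon$ are arbitrary, $d_U(\phi,\psi)=0$.

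For (1), I would first reduce to norm convergence: since $uau^*\sim_{\rm Cu}a$ for every unitary $u$, the distance $d_W$ between normal elements is unchanged if either argument is conjugated by a unitary of $A$; choosing $u_n\in\mathcal U(A)$ with $\|u_nx_nu_n^*-x\|<d_U(x_n,x)+\tfrac{1}{n}$ and replacing $x_n$ by $u_nx_nu_n^*$, I may assume $x_n\to x$ in norm, with ${\rm sp}(x_n)\subseteq\Omega$ (hence also ${\rm sp}(x)\subseteq\Omega$, by upper semicontinuity of the spectrum and normality). Fix a small $\varepsilon>0$; the goal is $d_W(x_n,x)\le\varepsilon$ for all large $n$. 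The difficulty is that $d_W$ involves $f_O$ for \emph{every} open $O\subseteq\Omega$, while Lemma \ref{app N} only controls $\|f(x_n)-f(x)\|$ on a finite set; so the plan is to reduce, uniformly in $O$, to a single finite family. Using compactness of $\Omega$, fix a finite $(\varepsilon/40)$-net $\{z_1,\dots,z_m\}$ of $\Omega$ and let $\mathcal O_\varepsilon$ be the finite collection of all unions of the relatively open sets $B(z_j,\varepsilon/10)\cap\Omega$.

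The key geometric observation is: given any open $U\subseteq\Omega$ and any $\eta\in(0,1)$, let $O_-\in\mathcal O_\varepsilon$ be the union of those $B(z_j,\varepsilon/10)\cap\Omega$ with $B(z_j,\varepsilon/10)\cap\Omega\subseteq U_{\varepsilon/2}$; then a direct estimate shows that every point of $\{f_U>\eta\}$ lies in one of these sets at distance at least $3\varepsilon/40$ from $\Omega\setminus O_-$, so $f_{O_-}\ge 3\varepsilon/40$ on $\{f_U>\eta\}$, while $O_-\subseteq U_{\varepsilon/2}$. Hence $(f_U-\eta)_+\le (40/\varepsilon)(f_{O_-}-\varepsilon/40)_+$ on $\Omega$, giving $(f_U(x_n)-\eta)_+\lesssim_{\rm Cu}(f_{O_-}(x_n)-\varepsilon/40)_+$. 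Applying Lemma \ref{app N} to the finite set $\{f_O:O\in\mathcal O_\varepsilon\}$, fix $N$ with $\|f_O(x_n)-f_O(x)\|<\varepsilon/40$ for all $O\in\mathcal O_\varepsilon$ and $n\ge N$; then Lemma \ref{R lem}(ii) gives $(f_{O_-}(x_n)-\varepsilon/40)_+\lesssim_{\rm Cu}f_{O_-}(x)$, and since $O_-\subseteq U_{\varepsilon/2}\subseteq U_\varepsilon$, working in $C^*(x,1)\cong C({\rm sp}(x))$ (where Cuntz subequivalence of positive functions is containment of cozero sets) yields $f_{O_-}(x)\lesssim_{\rm Cu}f_{U_\varepsilon}(x)$. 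Chaining these and letting $\eta\downarrow 0$, Lemma \ref{R lem}(i) yields $f_U(x_n)\lesssim_{\rm Cu}f_{U_\varepsilon}(x)$ for every open $U$; exchanging the roles of $x_n$ and $x$ (same $N$, same $\mathcal O_\varepsilon$) gives the symmetric inequality, so $d_W(x_n,x)\le\varepsilon$ for $n\ge N$, whence $d_W(x_n,x)\to 0$.

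The step I expect to be the main obstacle is precisely this reduction to a finite family: functional calculus for normal elements is not operator-Lipschitz, so one cannot control $\|f_O(x_n)-f_O(x)\|$ uniformly over all open $O$, and the device of absorbing each cut-down $(f_U(x_n)-\eta)_+$ into a function $f_{O_-}$ drawn from the \emph{fixed} finite collection $\mathcal O_\varepsilon$ is what brings Lemma \ref{app N} into play. Making the choices of net radius, ball radius, the slack $\varepsilon/2$ and the margin $3\varepsilon/40$ mutually consistent --- so that $\{f_U>\eta\}$ is genuinely covered with a definite margin while $O_-$ stays inside $U_{\varepsilon/2}$ --- is the one delicate piece of bookkeeping; the corresponding reduction used in (2), that $d_W(\phi,\psi)=0$ descends to $d_W(\phi({\rm id}),\psi({\rm id}))=0$, is routine.
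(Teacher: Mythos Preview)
Your proof is correct and follows essentially the same strategy as the paper's. For part~(1) both arguments reduce to norm convergence and then use compactness of $\Omega$ to pass to a \emph{finite} test family on which Lemma~\ref{app N} is invoked; the paper chooses unions $Y$ of a fixed open cover together with tent functions $g_Y(z)=(1-\operatorname{dist}(z,Y)/\delta)_+$ (so the cut-down $(g_Y-\delta)_+$ already has support containing $Y$, avoiding your auxiliary parameter $\eta$), while for part~(2) the paper simply asserts $d_W(\phi({\rm id}),\psi({\rm id}))=0$ directly from the hypothesis rather than going through surjectivity of $\mathrm{Cu}(\rho)$ --- but these are cosmetic differences.
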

\begin{proof}
 (1) Without loss of generality, we assume that $x_n\rightarrow x$. Suppose that $X_n={\rm sp}(x_n)$ and  $X={\rm sp}(x)$. We need to show that for any $\varepsilon>0$, there exists $N\in \mathbb{N}$ such that
  $$
  d_W(\varphi_{X_n},\varphi_X)<\varepsilon, \quad n\geq N.
  $$

  Let $\delta=\varepsilon/2$. Since $\Omega$ is compact, there is a finite open cover $\{\Omega_1,\Omega_2,\cdots,\Omega_m\}$  of $\Omega$ with ${\rm diameter}(\Omega_i)\leq\delta$, $i=1,2,\cdots,m$. Let
  $
  \mathcal{F}
  $ denote the set of unions of the sets $\Omega_1,\Omega_2,\cdots,\Omega_m$. For any $Y\in \mathcal{F}$, define
  $$
  g_Y(z)=
  \begin{cases}
    1-{\rm dist}(z,Y)/ \delta, & \mbox{if } z\in (Y)_\delta,\\
0 & \mbox{otherwise}.
  \end{cases}
  $$
Set
  $$
  F=\{g_{Y}(z)\mid Y\in\mathcal{F}\}.
  $$
Since $F$ is finite, by Lemma \ref{app N}, there exists $N\in \mathbb{N}$ such that
$$
\|g(x_n)-g(x)\|<\delta,\quad \,g\in F,\,n\geq N.
$$

 Now for any open set $O\subset \Omega$, let $Y_O=\mathop{\bigcup}\limits_{O\cap\Omega_i\neq\varnothing}\Omega_i$, then  $Y_O\in \mathcal{F}$ and
 $$
 O\subset Y_O\subset O_\delta\subset (Y_O)_\delta\subset O_{2\delta}.
 $$
Then we have
 $$
 \varphi_{X_n}(f_O)\lesssim_{\rm Cu} \varphi_{X_n}(f_{Y_O})
 \quad{\rm and}\quad
\varphi_{X}(f_{Y_O})\lesssim_{\rm Cu} \varphi_{X}(f_{O_{2\delta}}).
 $$

Note that $g_{Y_O}\in F$, for $n\geq N$, we have
$$
\|g_{Y_O}(x_n)-g_{Y_O}(x)\|<\delta.
$$
It follows from  Lemma \ref{R lem}(ii) that
$$
(g_{Y_O}(x_n)-\delta)_+\lesssim_{\rm Cu} g_{Y_O}(x).
$$
Note that ${\rm support}(g_{Y_O})={(Y_O)}_\delta,$ then $f_{Y_O}\lesssim_{\rm Cu} (g_{Y_O}-\delta)_+$,  we get
$$
f_{Y_O}(x_n) \lesssim_{\rm Cu}(g_{Y_O}(x_n)-\delta)_+\lesssim_{\rm Cu} g_{Y_O}(x)\lesssim_{\rm Cu} f_{{(Y_O)}_\delta}(x).
$$
Therefore,
$$
\varphi_{X_n}(f_{Y_O})=f_{Y_O}(x_n) \lesssim_{\rm Cu} f_{({Y_O})_\delta}(x)=\varphi_{X}(f_{{(Y_O)}_\delta}).
$$
Now we have
 $$
 \varphi_{X_n}(f_O)\lesssim_{\rm Cu} \varphi_{X_n}(f_{Y_O})
 \lesssim_{\rm Cu}
\varphi_{X}(f_{({Y_O})_\delta})\lesssim_{\rm Cu} \varphi_{X}(f_{O_{2\delta}}).
 $$
Similarly, for any open $O\subset \Omega$, we also have
$$
 \varphi_{X}(f_O)\lesssim_{\rm Cu} \varphi_{X_n}(f_{O_{2\delta}}).
$$

Finally, we obtain
$$
 d_W(\varphi_{X_n},\varphi_{X})< 2\delta=\varepsilon.
$$

(2) Set $a=\phi({\rm id})$, $b=\psi({\rm id})$. By hypothesis, we have $d_W(a,b)=0$ and ${\rm  ind}(a)={\rm  ind}(b)$, by Theorem \ref{HL K1}, we get  $d_U(a,b)=0$. This means that there exists a sequence of unitaries $u_n\in A$ such that $u_n^*au_n\rightarrow b$. Then for any finite subset $F\subset C(\Omega)$ and $\varepsilon>0$, apply Lemma \ref{app N}, there exists $N\in \mathbb{N}$ such that
$$
\|f(u_n^*au_n)-f(b)\|<\varepsilon,\quad  \, f\in F,\,n\geq N.
$$
From the Stone-Weierstrass theorem, it can be checked that
$$
f(u_n^*au_n)=u_n^*f(a)u_n, \quad  \, f\in F.
$$
Now we get
$$
\|u_n^*\phi(f)u_n-\psi(f)\|<\varepsilon,\quad  \,f\in F.
$$
Since $\varepsilon$ is arbitrary, then $d_U(\phi,\psi)=0.$

\end{proof}
\begin{remark}
  The question whether the metrics $d_W$ and $d_U$ are equivalent relates to the distances between unitary orbits. There are some results for  self-adjoint elements and normal elements. Under certain conditions, one can even get $d_W=d_U$; see \cite{RS,HL,JSV,JST, EL} for more details. 
\end{remark}

\section{Approximate Lifting}

In this section, we present an approximate existence result. Given a Cu-morphism with certain properties, we can lift it to a homomorphism between $C^*$-algebras.

\begin{proposition}\label{infexist}
Let $A$ be a unital, simple, separable $C^*$-algebra of stable rank one. Then for any $x\in {\rm Cu}(A)$ $(x\neq 0)$ with $x\leq  \langle 1_A \rangle$, we have $\inf\limits_{\tau\in T(A)}d_\tau(x)>0$. $($Here, $d_\tau$ is a normalized functional on ${\rm Cu}(A)$.$)$
\end{proposition}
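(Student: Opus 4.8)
The plan is to argue by contradiction using the compactness of $T(A)$ together with the fact that $A$ being simple forces every nonzero element of $\mathrm{Cu}(A)$ to be "seen" by every trace. Suppose that $\inf_{\tau\in T(A)} d_\tau(x)=0$. Since $A$ is unital, simple, separable with stable rank one, $T(A)$ is a nonempty, compact, convex set in the weak-$*$ topology, and the map $\tau\mapsto d_\tau(x)$ is lower semicontinuous and affine on $T(A)$ (it is a decreasing limit... actually an increasing supremum of the continuous functions $\tau\mapsto \tau((x-\varepsilon)_+)$ as $\varepsilon\to 0$, so it is lower semicontinuous). A lower semicontinuous function on a compact set attains its infimum, so there is a trace $\tau_0$ with $d_{\tau_0}(x)=0$. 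The heart of the argument is then to show that $d_{\tau_0}(x)=0$ is impossible for $x\neq 0$: this is precisely the statement that the trace $\tau_0$ is faithful on positive elements at the level of the dimension function, which holds because $A$ is simple.

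More concretely, here is how I would carry this out. First, lift $x$ to a positive element: by Proposition~\ref{liftproj} (using $x\le\langle 1_A\rangle$ and stable rank one), write $x=\langle a\rangle$ for some nonzero $a\in A_+$. Then $d_{\tau_0}(a)=\lim_{n}\tau_0(a^{1/n})=0$. For each $\varepsilon>0$ the element $(a-\varepsilon)_+$ satisfies $(a-\varepsilon)_+\lesssim_{\mathrm{Cu}} a$, hence $d_{\tau_0}((a-\varepsilon)_+)\le d_{\tau_0}(a)=0$; but $(a-\varepsilon)_+$ is Cuntz-equivalent to a positive element dominated by a function of $a$ that is bounded below on its support, and in a simple $C^*$-algebra the hereditary subalgebra generated by any nonzero positive element is a nonzero (hence full) hereditary subalgebra. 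I would use this fullness: the hereditary subalgebra $\overline{aAa}$ contains a nonzero projection or at least a nonzero positive element $b$ with $d_{\tau_0}(b)>0$ — indeed, since $\tau_0$ is a state, there is some $c\in A_+$ with $\tau_0(c)>0$, and by simplicity $c$ is Cuntz-below a multiple of $a$, i.e. $c\lesssim_{\mathrm{Cu}} n\cdot a$ for some $n$, whence $0<\tau_0(c)\le d_{\tau_0}(c)\le n\,d_{\tau_0}(a)=0$, a contradiction.

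The step I expect to be the main obstacle — or at least the one requiring care — is the passage "$\tau_0(c)\le d_{\tau_0}(c)$" and the use of simplicity to get $c\lesssim_{\mathrm{Cu}} n\cdot a$. For the first inequality one notes $\tau_0(c)=\tau_0((\|c\|^{-1}c)\|c\|)$ and $c^{1/n}\ge \|c\|^{-(n-1)/n} c$ for $\|c\|\le 1$ (or simply that $\tau_0(c)\le\tau_0(c^{1/n})$ when $\|c\|\le 1$ since $t\le t^{1/n}$ on $[0,1]$), so $\tau_0(c)\le\lim_n\tau_0(c^{1/n})=d_{\tau_0}(c)$ after normalizing $c$ to norm $\le 1$. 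For the comparison, the standard fact is that in a simple $C^*$-algebra, for nonzero $a\in A_+$ and any $c\in A_+$ there is $n$ with $c\lesssim_{\mathrm{Cu}} \bigoplus^n a$ (this follows from the full hereditary subalgebra generated by $a$ together with an approximation argument; see e.g. the proof that simple $C^*$-algebras have "algebraic simplicity" at the level of the Cuntz semigroup). Assembling these: $0<\tau_0(c)\le d_{\tau_0}(c)\le d_{\tau_0}(\bigoplus^n a)=n\,d_{\tau_0}(a)=0$, which is absurd. Hence $\inf_{\tau\in T(A)}d_\tau(x)>0$. I would remark that an alternative, slicker route is to invoke Theorem~\ref{QT FCu}: the function $\lambda\mapsto\lambda(x)$ on the compact set $\mathrm{F}_{[1]}(\mathrm{Cu}(A))$ is lower semicontinuous and strictly positive (a functional vanishing on a nonzero element of the Cuntz semigroup of a simple algebra would have to vanish on all of $\mathrm{Cu}(A)$, contradicting $\lambda([1])=1$), so its infimum is attained and positive; combined with the homeomorphism $T(A)=\mathbf{QT}_2(A)\cong\mathrm{F}_{[1]}(\mathrm{Cu}(A))$ from Remark~\ref{quasi tr} and Theorem~\ref{QT FCu}, this gives the claim directly.
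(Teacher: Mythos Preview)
Your argument is correct, but the paper's proof is considerably more direct. Rather than arguing by contradiction via lower semicontinuity and compactness of $T(A)$, the paper picks a nonzero $x'\in W(A)$ with $x'\le x$, lifts it to $a\in A_+$ via Proposition~\ref{liftproj}, and then uses simplicity in its algebraic form: there exist $a_1,\dots,a_k\in A$ with $1_A=\sum_i a_i^* a a_i$. Applying any $\tau\in T(A)$ gives
\[
1=\sum_i\tau(a^{1/2}a_i^*a_ia^{1/2})\le\Big(\sum_i\|a_i^*a_i\|\Big)\tau(a),
\]
so $\tau(a)\ge 1/K$ with $K=\sum_i\|a_i^*a_i\|$ independent of $\tau$. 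This yields the uniform lower bound $d_\tau(x)\ge\tau(a)\ge 1/K$ immediately, with no appeal to lower semicontinuity of $\tau\mapsto d_\tau(x)$ or to the infimum being attained.

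Your route and the paper's route rely on the same underlying fact---simplicity gives $1_A\lesssim_{\rm Cu} n\cdot a$, or equivalently $1_A=\sum a_i^*aa_i$---but you invoke it at the Cuntz-semigroup level after reducing to a single trace, whereas the paper invokes it at the trace level and reads off a bound valid for all traces at once. The paper's version is shorter and avoids two technical checks you flagged (lower semicontinuity of $\tau\mapsto d_\tau(a)$ and the inequality $\tau_0(c)\le d_{\tau_0}(c)$). One small gap in your write-up: Proposition~\ref{liftproj} requires $x\in W(A)$, not just $x\in\mathrm{Cu}(A)$; the paper handles this by first passing to a nonzero $x'\in W(A)$ below $x$, and you should do the same.
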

\begin{proof}
  From the definition of ${\rm Cu}(A)$, there exists $x'\in W(A)$ $(x'\neq 0)$ such that $x'\leq x$. By Lemma \ref{liftproj}, there exists $a\in A_+$ such that $a\leq 1_A$ and $\langle a\rangle=x'$. By the simplicity of $A$, there exist $a_1,a_2,\cdots,a_k$ in $A$ such that
 $
 1_A=\sum_{i=1}^{k}a_i^*aa_i.
 $
 Then for any $\tau\in T(A)$,
 $$
 1=\tau(1_A)=\sum_{i=1}^{k}\tau(a_i^*aa_i)=\sum_{i=1}^{k}\tau(a^{1/2}a_i^*a_ia^{1/2})\leq \sum_{i=1}^{k}\|a_i^*a_i\|\cdot \tau(a).
 $$
 Now we get $\tau(a)>0$, whence from the compactness of $T(A)$ and $$d_\tau(x)\geq d_\tau(x') \geq \tau(a),$$ we get $\inf\limits_{\tau\in T(A)}d_\tau(x)>0.$
\end{proof}

Suppose that $\Omega$ is a compact space, and for any $x\in \Omega$, write  $B(x,r)=\{y\in \Omega\mid{\rm dist}(y,x)< r\}$ and $R(x,s)=\{y\in \Omega\,|\,{\rm dist}(y,x)= s\}$.

\begin{lemma}\label{smalllem}
  Let $A$ be a unital, simple, separable, exact $C^*$-algebra. Let $\alpha:{\rm Cu}(C(\Omega))\rightarrow {\rm Cu}(A)$ be a {\rm  Cu}-morphism with $\alpha(\mathds{1}_{\Omega})\leq \langle 1_A \rangle$. Then for any $x\in \Omega$ and $r,\sigma>0$, there exist $s\in (r/2,r)$ and $\varepsilon>0$ such that $s\pm\varepsilon \in (r/2,r)$
  and
  $$
  d_{\tau}(\alpha(\mathds{1}_{R(x,s)_\varepsilon}))\leq\sigma,\quad\forall \tau\in {\rm \bf  QT}_2(A).
  $$
\end{lemma}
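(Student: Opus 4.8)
The plan is to exploit the fact that the "annular regions" $R(x,s)$ for distinct radii $s$ are pairwise disjoint, so their images under $\alpha$ cannot all have large mass. Fix $x\in\Omega$ and $r,\sigma>0$. First I would choose an integer $N$ with $N\sigma > d_\tau(\alpha(\mathds{1}_\Omega))$ for all $\tau$; this is legitimate because $\alpha(\mathds{1}_\Omega)\le\langle 1_A\rangle$ forces $d_\tau(\alpha(\mathds{1}_\Omega))\le 1$ for every $\tau\in{\rm \bf QT}_2(A)$ (recall $QT_2=T$ here and these are normalized). Then pick $N$ radii $s_1<\cdots<s_N$ in $(r/2,r)$ that are sufficiently separated, say $|s_i-s_j|>2\varepsilon_0$ for some $\varepsilon_0>0$ and all $i\ne j$. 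The closed "thickened circles" $R(x,s_i)_{\varepsilon_0}$ are then pairwise disjoint open-ish sets in $\Omega$; more precisely one can find genuinely disjoint open sets $U_i\supset R(x,s_i)_{\varepsilon}$ for a slightly smaller $\varepsilon\in(0,\varepsilon_0)$, with $s_i\pm\varepsilon\in(r/2,r)$.

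Next I would transfer this disjointness to ${\rm Cu}(A)$. Since $U_i$ are disjoint open subsets of $\Omega$, in ${\rm Lsc}(\Omega,\overline{\mathbb{N}})\cong{\rm Cu}(C(\Omega))$ we have $\sum_{i=1}^N \mathds{1}_{U_i}\le \mathds{1}_\Omega$, and applying the Cu-morphism $\alpha$ (which is additive and order-preserving) gives
$$
\sum_{i=1}^{N}\alpha(\mathds{1}_{U_i}) \le \alpha(\mathds{1}_{\Omega}) \le \langle 1_A\rangle.
$$
Evaluating the functional $d_\tau$ (which is additive and order-preserving by Definition \ref{d def}) yields $\sum_{i=1}^N d_\tau(\alpha(\mathds{1}_{U_i}))\le d_\tau(\alpha(\mathds{1}_\Omega))\le 1<N\sigma$ for every $\tau$. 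By the pigeonhole principle, for each $\tau$ there is some index $i(\tau)$ with $d_\tau(\alpha(\mathds{1}_{U_{i(\tau)}}))<\sigma$ — but the index depends on $\tau$, which is not yet what we want, since the conclusion demands a single radius $s$ that works for all $\tau$ simultaneously.

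This is the main obstacle, and the way around it is to use compactness of the trace space together with semicontinuity. I would argue that $\tau\mapsto d_\tau(\alpha(\mathds{1}_{U_i}))$ is lower semicontinuous on ${\rm \bf QT}_2(A)$ (it is a supremum, over $\delta>0$, of the continuous-in-$\tau$ quantities coming from $\tau((f-\delta)_+^{\,\cdot})$; alternatively write $d_\tau(\alpha(\mathds{1}_{U_i}))=\sup_\delta d_\tau((b_i-\delta)_+)$ for a positive lift $b_i$ of $\alpha(\mathds{1}_{U_i})$, each term being continuous). Then the sets $V_i=\{\tau: d_\tau(\alpha(\mathds{1}_{U_i}))<\sigma\}$ need not be open, so instead I would run a finer version of the averaging: choose $N$ large enough that $d_\tau(\alpha(\mathds{1}_\Omega))<N\sigma/2$, so $\sum_i d_\tau(\alpha(\mathds{1}_{U_i}))<N\sigma/2$; then the set of indices $i$ with $d_\tau(\alpha(\mathds{1}_{U_i}))\ge\sigma$ has at most $N/2$ elements, so more than half the indices are "good" for each $\tau$. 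Covering the compact space ${\rm \bf QT}_2(A)$ and using that the complement sets $\{\tau: d_\tau(\alpha(\mathds{1}_{U_i}))\ge\sigma\}$ are closed, a Baire-category or direct finite-intersection argument produces one index $i$ whose good set $V_i$ is all of ${\rm \bf QT}_2(A)$ — concretely, if no single $V_i=\{\tau:d_\tau(\alpha(\mathds1_{U_i}))<\sigma\}$ were everything, pick $\tau_i\notin V_i$ for each $i$; then at $\tau=\tau_i$ at least the index $i$ is bad, but this gives no contradiction directly, so one really needs the ``more than half are good'' refinement plus a cleverer counting, or alternatively one passes to $d_\tau(\alpha(\mathds1_{\overline{U_i}}))$ and uses that these are \emph{upper} semicontinuous, making $V_i$ open, then a finite subcover of ${\rm \bf QT}_2(A)$ by the $V_i$ combined with disjointness (each $\tau$ lies in at least $N/2$ of them) forces, for $N$ exceeding twice the subcover size, some $V_i$ to cover everything. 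I would set $s=s_i$ for that index and $\varepsilon$ as above; shrinking $\varepsilon$ further if needed to ensure $R(x,s)_\varepsilon\subset U_i$ and $s\pm\varepsilon\in(r/2,r)$ completes the argument, since then $d_\tau(\alpha(\mathds{1}_{R(x,s)_\varepsilon}))\le d_\tau(\alpha(\mathds{1}_{U_i}))<\sigma$ for all $\tau\in{\rm \bf QT}_2(A)$.
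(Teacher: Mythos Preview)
Your pigeonhole setup with finitely many disjoint thickened annuli $U_1,\dots,U_N$ is natural, and you correctly isolate the real difficulty: the ``good'' index $i(\tau)$ depends on $\tau$, while the conclusion requires a single $i$ working for every $\tau\in{\rm\bf QT}_2(A)$. The fixes you sketch do not close this gap. Passing to $\mathds{1}_{\overline{U_i}}$ is not available, since $\overline{U_i}$ is closed, so its indicator is not in ${\rm Lsc}(\Omega,\overline{\mathbb{N}})\cong{\rm Cu}(C(\Omega))$ and $\alpha$ is simply undefined on it. And even granting that the sets $V_i=\{\tau:d_\tau(\alpha(\mathds{1}_{U_i}))<\sigma\}$ were open with each $\tau$ lying in more than half of them, your counting claim --- that a finite subcover of size $k<N/2$ forces some single $V_i$ to be all of ${\rm\bf QT}_2(A)$ --- is false in general: take $K=[0,1]$ and $V_i=K\setminus\{i/N\}$; every point lies in $N-1$ of the $V_i$, any two of them already cover $K$, yet no $V_i$ equals $K$. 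Your argument invokes no structure (such as affinity on the trace simplex) that would rule out the analogous configuration, so as written it does not produce a uniform index.

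The paper avoids the uniform-index problem by trading finitely many thick annuli for \emph{uncountably many} thin spheres $R(x,s)$, $s\in(r/2,r)$. For each fixed $\tau$ only finitely many $s$ can satisfy $\mu_{\alpha^*\tau}(R(x,s))>\sigma/2$, by disjointness and finite total mass. Separability of $A$ gives a countable dense subset $Y\subset{\rm\bf QT}_2(A)$; the union of bad radii over all $\tau\in Y$ is then countable, so some $s\in(r/2,r)$ is good for every $\tau\in Y$ simultaneously. A density step (carried out on the open complement $\Omega\setminus R(x,s)$, where $\alpha$ is legitimately defined and the relevant map in $\tau$ is lower semicontinuous) extends the estimate to all of ${\rm\bf QT}_2(A)$, and a final compactness argument on ${\rm\bf QT}_2(A)$ produces the thickening parameter $\varepsilon$. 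The idea you are missing is precisely this cardinality trade-off: uncountably many candidate radii against a countable dense set of traces.
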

\begin{proof}
   For any open set $O\subset \Omega$ and $\tau\in T(A)$, let $\mu_{\alpha^*\tau}$ be the countably additive measure on $\Omega$ such that
   $$
   \mu_{\alpha^*\tau}(O)=d_{\tau}(\alpha(\mathds{1}_O)).
   $$
   If $\alpha(\mathds{1}_{B(x,r)})=0$, the proof is trivial.
   In general, we have $\mu_{\alpha^*\tau}({B(x,r)})\leq 1$. Since $R(x,s)\cap R(x,s')=\varnothing$, if $s\neq s'$, there are at most finitely many $s$ in $(r/2,r)$ such that
   $$
   \mu_{\alpha^*\tau}(R(x,s))>\sigma/2.$$

    Since $A$ is exact,  we have ${\rm \bf  QT}_2(A)\subset T(A)$ (see \ref{quasi tr}).
   By Remark \ref{on QT}, ${\rm \bf  QT}_2(A)$ is compact metrizable and has a countable basis, we may choose a countable dense subset $Y$ of ${\rm \bf  QT}_2(A)$.

     For any $\tau\in Y$,  we define
   $$
   \mathcal{S}_\tau=\{s\,|\, \mu_{\alpha^*\tau}(R(x,s))>\sigma/2\}.
   $$
   Then $\bigcup_{\tau\in Y} \mathcal{S}_{\tau}$ has at most countably many points and
   $$
   (r/2,r) \backslash\bigcup_{\tau\in Y} \mathcal{S}_{\tau}\neq \varnothing.
   $$
   Now there exist an $s\in (r/2,r)$ such that $
   \mu_{\alpha^*\tau}(R(x,{s}))\leq \sigma/2$, i.e.,
   $$
   \mu_{\alpha^*\tau}(\Omega\backslash R(x,s))>1-\sigma/2,\quad\, \tau\in Y.
   $$
   That is,
   $$
   d_{\tau}(\alpha(\mathds{1}_{\Omega\backslash R(x,s)}))\geq 1-\sigma/2,\quad \,\tau\in Y.
   $$
   By the density of $Y$ and Theorem \ref{QT FCu}, we have
   $$
   d_{\tau}(\alpha(\mathds{1}_{\Omega\backslash R(x,s)}))\geq 1-\sigma/2,\quad  \,\tau\in {\rm \bf  QT}_2(A).
   $$


Let $\{\varepsilon_n\}$ be a strictly decreasing sequence such that
  $$
  \varepsilon_n\leq \min\{s-r/2,r-s\},\,\, n=1,2\cdots,\quad {\rm and }\quad  \lim_{n\rightarrow\infty}\varepsilon_n=0.
  $$
The sequence $\{\mathds{1}_{\Omega\backslash \overline{R(x,s)_{\varepsilon_n}}}\}$ is  increasing in ${\rm Cu}(C(\Omega))$ with supremum $\mathds{1}_{\Omega\backslash R(x,s)}$.

   Since $\alpha$ and $d_\tau$ preserve the suprema of increasing sequences,
   $$
   d_{\tau}(\alpha(\mathds{1}_{\Omega\backslash R(x,s)}))=\lim_{n\rightarrow\infty} d_{\tau}(\alpha(\mathds{1}_{\Omega\backslash \overline{R(x,s)_{\varepsilon_n}}} )),\quad\,\tau\in {\rm \bf  QT}_2(A).
   $$
   For any $\tau\in {\rm \bf  QT}_2(A)$, there exist an integer  $N_{\tau}\in \mathbb{N}$ and an open neighborhood $V_{\tau}$ of $\tau$ such that
  $$
  |d_{\tau}(\alpha(\mathds{1}_{\Omega\backslash R(x,s)}))-d_\tau(\alpha (\mathds{1}_{\Omega\backslash \overline{R(x,s)_{\varepsilon_n}}}))|<\frac{\sigma}{2},\quad n> N_{\tau},\, \tau\in V_{\tau}.
  $$
  Then $\{V_\tau\,|\,\tau\in {\rm \bf  QT}_2(A)\}$ forms an open cover of ${\rm \bf  QT}_2(A)$, from the compactness of ${\rm \bf  QT}_2(A)$, there are finite sets $\{V_{\tau_1},V_{\tau_2},\cdots,V_{\tau_k}\}$ cover ${\rm \bf  QT}_2(A)$. Now we set
  $$
  N_0=\max\{N_{\tau_1},N_{\tau_2},\cdots,N_{\tau_k}\}.
  $$
For any $n\geq N_0$, we have
  $$
 d_\tau(\alpha (\mathds{1}_{\Omega\backslash \overline{R(x,s)_{\varepsilon_n}}}))>d_{\tau}(\alpha(\Omega\backslash R(x,s)))- \frac{\sigma}{2}>1-\sigma,\quad   \tau\in {\rm \bf  QT}_2(A).
  $$

   Then for any $0<\varepsilon\leq \varepsilon_{N_0}$, we have $s\pm\varepsilon \in (r/2,r)$ and
   $$
   d_{\tau}(\alpha(\mathds{1}_{R(x,s)_{\varepsilon}} ))\leq\sigma,\,\,\,\,\tau\in {\rm \bf  QT}_2(A).
   $$
\end{proof}

\begin{definition}\rm
 Let $\alpha:{\rm Lsc}(\Omega,\overline{\mathbb{N}})\rightarrow {\rm Cu}(A)$ be a Cu-morphism and $\alpha(\mathds{1}_{\Omega})\leq \langle 1_A \rangle$. Let $\delta>0$. We say a collection of mutually disjoint open sets $\{O_1,O_2,\cdots, O_N\}$ is an $almost$ $\delta$-$cover\,with\, respect\,to\,\alpha$ if,

 (i) ${\rm dist}(x, U)<\delta$ for all $x\in \Omega$, where $U=\bigcup_{i=1}^N O_i$;

 (ii) diameter$(O_i)\leq \delta$, for any $i=1,2,\cdots,N$;

 (iii) dist$(O_i,O_j)>0$, for any $i\neq j$, $i,j\in \{1,\cdots,N\}$;

 (iv) $\alpha(\mathds{1}_{\Omega\backslash  \overline{U}})\leq\alpha(\mathds{1}_{(O_i)_\delta\cap U })$, for any $i=1,2,\cdots,N$;
\end{definition}

\begin{lemma}\label{exist cover}
Let $A$ be a unital, simple, separable, exact $C^*$-algebra of stable rank one and $\Omega$ be a compact metric space. Let  $\alpha:{\rm Cu}(C(\Omega))\rightarrow {\rm Cu}(A)$ be a {\rm Cu}-morphism and $\delta>0$. Suppose that $\Omega$ has an open cover $\{B(x_1,{\delta/4}),\cdots, B(x_m,{\delta/4}) \}$ satisfying

{\rm  (1)} $\alpha(\mathds{1}_{\Omega})\leq \langle 1_A\rangle$;

{\rm  (2)} $\alpha(\mathds{1}_{B(x_i,{\delta/2})})\neq 0$, for any $i\in \{1,2,\cdots,n\}$.

Then  $\Omega$ has an almost $\delta$-cover with respect to $\alpha$.
\end{lemma}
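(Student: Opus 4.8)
The plan is to construct the almost $\delta$-cover by cutting the given balls $B(x_i,\delta/4)$ along a family of thin spherical shells chosen, via Lemma~\ref{smalllem}, to be almost invisible to $\alpha$, and then to invoke strict comparison at the end. First I would record the constant $c_0:=\min_{1\le i\le m}\inf_{\tau\in T(A)}d_\tau(\alpha(\mathds{1}_{B(x_i,\delta/2)}))$, which is $>0$: each $\alpha(\mathds{1}_{B(x_i,\delta/2)})$ is nonzero by hypothesis~(2) and satisfies $\alpha(\mathds{1}_{B(x_i,\delta/2)})\le\alpha(\mathds{1}_\Omega)\le\langle 1_A\rangle$, so Proposition~\ref{infexist} applies. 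Fix $\sigma>0$ with $2m\sigma<c_0$. Then, for each $i$, apply Lemma~\ref{smalllem} with centre $x_i$, radius $\delta/2$ and tolerance $\sigma$; using in addition that its proof exhibits a \emph{cocountable} set of admissible radii, choose $s_i\in(\delta/4,\delta/2)$ with $d_\tau(\alpha(\mathds{1}_{R(x_i,s_i)_{\varepsilon}}))\le\sigma$ for all $\tau\in{\rm \bf QT}_2(A)$ and all small $\varepsilon$, and moreover with $s_i\ne{\rm dist}(x_j,x_i)$ for every $j\ne i$. Finally fix one $\varepsilon>0$, common to all $i$, small enough that $s_i\pm\varepsilon\in(\delta/4,\delta/2)$, that the $\alpha$-estimate still holds for $R(x_i,s_i)_{2\varepsilon}$, and that $\varepsilon<|{\rm dist}(x_j,x_i)-s_i|$ whenever $i\ne j$.

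Next I would define the cells: for $S\subseteq\{1,\dots,m\}$ put
$$
O^{(S)}=\{z\in\Omega:\ {\rm dist}(z,x_k)<s_k-\varepsilon\ (k\in S),\ \ {\rm dist}(z,x_k)>s_k+\varepsilon\ (k\notin S)\},
$$
let $O_1,\dots,O_N$ be the non-empty ones, and $U=\bigcup_i O_i$. These are open and pairwise disjoint; if $S\ne S'$ and $k$ lies in their symmetric difference, the two cells sit on opposite sides of $R(x_k,s_k)_\varepsilon$, so ${\rm dist}(O^{(S)},O^{(S')})\ge2\varepsilon>0$, giving~(iii). Since $\Omega=\bigcup_k B(x_k,\delta/4)\subseteq\bigcup_k B(x_k,s_k-\varepsilon)$, the choice $S=\varnothing$ yields $O^{(\varnothing)}=\varnothing$; and for $k\in S$ one has $O^{(S)}\subseteq B(x_k,s_k-\varepsilon)$, of diameter $<2s_k<\delta$, giving~(ii). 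For~(i) I would observe that $\Omega\setminus U=\bigcup_k\{z:\ {\rm dist}(z,x_k)\in[s_k-\varepsilon,s_k+\varepsilon]\}$, and that every centre $x_k$ lies in $U$: indeed ${\rm dist}(x_k,x_j)\notin[s_j-\varepsilon,s_j+\varepsilon]$ for all $j$ (for $j=k$ because $0<s_k-\varepsilon$, for $j\ne k$ by the genericity of $s_j$ and the smallness of $\varepsilon$). Hence if $z\in\Omega\setminus U$, picking $k$ with ${\rm dist}(z,x_k)\le s_k+\varepsilon<\delta/2$ gives ${\rm dist}(z,U)\le{\rm dist}(z,x_k)<\delta$, and this is~(i).

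The substantive point is~(iv). On one side, $\Omega\setminus\overline U$ is open and contained in $\Omega\setminus U$, hence in $\bigcup_k R(x_k,s_k)_{2\varepsilon}$ — a point at $\Omega$-distance in $[s_k-\varepsilon,s_k+\varepsilon]$ from $x_k$ lies within $2\varepsilon$ of $R(x_k,s_k)$, the radial-projection estimate, immediate for $\Omega\subseteq\mathbb{C}$ — so by subadditivity of $d_\tau\circ\alpha$ and the choice of $s_k,\varepsilon$ we get $d_\tau(\alpha(\mathds{1}_{\Omega\setminus\overline U}))\le m\sigma$ for all $\tau\in T(A)$. On the other side, fixing $i$ and $k$ with $O_i\subseteq B(x_k,s_k-\varepsilon)$, any point of $O_i$ is within $s_k-\varepsilon<\delta/2$ of $x_k$, so $(O_i)_\delta\supseteq B(x_k,\delta/2)$, whence $(O_i)_\delta\cap U\supseteq B(x_k,\delta/2)\cap U$; bounding $B(x_k,\delta/2)\setminus U$ by $\bigcup_l R(x_l,s_l)_{2\varepsilon}$ and using subadditivity gives $d_\tau(\alpha(\mathds{1}_{(O_i)_\delta\cap U}))\ge d_\tau(\alpha(\mathds{1}_{B(x_k,\delta/2)}))-m\sigma\ge c_0-m\sigma>m\sigma\ge d_\tau(\alpha(\mathds{1}_{\Omega\setminus\overline U}))$ for all $\tau\in T(A)$. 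Since $A$ is exact, ${\rm \bf QT}_2(A)=T(A)$ and these functionals represent all of ${\rm F}_{[1]}({\rm Cu}(A))$ (Remark~\ref{quasi tr}, Theorem~\ref{QT FCu}), so strict comparison (Proposition~\ref{sc!}) upgrades this strict inequality of functionals to $\alpha(\mathds{1}_{\Omega\setminus\overline U})\le\alpha(\mathds{1}_{(O_i)_\delta\cap U})$, which is~(iv).

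I expect the main obstacle to be arranging these requirements \emph{simultaneously}: the cutting shells must be thin enough that the cells still $\delta$-fill $\Omega$ and keep diameter $\le\delta$ (conditions (i)–(iii)), yet also carry arbitrarily small $\alpha$-mass so that $\Omega\setminus\overline U$ is dominated by every $(O_i)_\delta\cap U$ — this is precisely the role of Lemma~\ref{smalllem}, supplemented by the genericity of the radii to keep the centres inside $U$, and the final comparison step genuinely uses that $A$ has strict comparison. A secondary technical point, transparent for $\Omega\subseteq\mathbb{C}$ but needing slightly more care for a general compact metric $\Omega$, is the passage from the metric shell $\{{\rm dist}(\cdot,x_k)\in[s_k-\varepsilon,s_k+\varepsilon]\}$ to the neighbourhood $R(x_k,s_k)_{2\varepsilon}$ of the genuine sphere, which is the set Lemma~\ref{smalllem} controls.
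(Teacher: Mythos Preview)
Your proposal follows the paper's overall strategy: use Proposition~\ref{infexist} to fix a positive lower bound, invoke Lemma~\ref{smalllem} to select radii $s_i\in(\delta/4,\delta/2)$ whose thin shells carry small $\alpha$-mass, cut $\Omega$ along these shells into disjoint open pieces, and finish~(iv) with strict comparison. The difference is in the disjointification. The paper removes the $\eta$-neighbourhood $R_\eta$ of $R=\bigcup_i R(x_i,s_i)$ to form $W=\{z:{\rm dist}(z,R)>\eta\}$ and then takes a greedy sequential partition $O_1=W\cap B(x_1,s_1)$, $O_2=(W\setminus O_1)\cap B(x_2,s_2)$, \dots; you instead use the sign-pattern cells $O^{(S)}$. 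Your route makes~(ii) and~(iii) more transparent, at the cost of the extra (cheap) genericity condition on the $s_i$ needed to keep the centres inside $U$ for~(i).

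There is, however, a gap in your verification of~(iv). You need the inclusion
\[
\{z:\ {\rm dist}(z,x_k)\in[s_k-\varepsilon,s_k+\varepsilon]\}\ \subseteq\ R(x_k,s_k)_{2\varepsilon},
\]
and you call this ``the radial-projection estimate, immediate for $\Omega\subseteq\mathbb{C}$''. It is not: $\Omega$ is an arbitrary compact subset of $\mathbb{C}$ (and in the Lemma as stated, a general compact metric space), so the radial projection of a point in the metric shell need not lie in $\Omega$; indeed $R(x_k,s_k)=\{y\in\Omega:{\rm dist}(y,x_k)=s_k\}$ can be empty while the metric shell is not. The paper avoids this because its pieces are carved from $W$, so that $\Omega\setminus\overline U\subseteq\overline{R_\eta}\subseteq\bigcup_i R(x_i,s_i)_{\varepsilon_i}$ by construction, which is exactly the set Lemma~\ref{smalllem} controls. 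Your argument is easily repaired: since you already reach into the proof of Lemma~\ref{smalllem} for the cocountability of admissible radii, note that the same proof equally bounds $d_\tau\big(\alpha(\mathds{1}_{\{z:\,|{\rm dist}(z,x_k)-s_k|<\varepsilon\}})\big)$, because this open metric annulus also decreases to $R(x_k,s_k)$ as $\varepsilon\to0$ and the compactness-of-${\rm\bf QT}_2(A)$ argument goes through verbatim. With that adjustment the rest of your~(iv) is correct.
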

\begin{proof}
  By Proposition \ref{infexist}, we set
  $$
  \sigma=\min_{1\leq i\leq m}\inf_{\tau\in T(A)}\{d_{\tau}(\alpha(\mathds{1}_{B(x_i,\delta/2)}))>0\}.
  $$
  For each  $i\in\{1,2,\cdots,m\}$, apply Lemma \ref{smalllem} for $x_i$, $\delta/2$ and $\sigma/(2m+1)$, there exist $s_i\in (\delta/4,\delta/2)$ and $\varepsilon_i$  such that $s_i\pm \varepsilon_i\in (\delta/4,\delta/2)$ and
  $$
  \mu_{\alpha^*\tau}(R(x_i,s_i)_{\varepsilon_i})\leq \frac{\sigma}{2m+1}<\frac{\sigma}{2m},\quad \tau\in {\rm \bf  QT}_2(A).
  $$
  Set $R= \bigcup_{i=1}^m R(x_i,s_i)$, then
  $$
  \mu_{\alpha^*\tau}(R)\leq  \sum_{i=1}^{m}\mu_{\alpha^*\tau}(R(x_i,s_i)_{\varepsilon_i})<\frac{\sigma}{2m}\cdot m=\frac{\sigma}{2}.
  $$

Since $\Omega\backslash R$ is open, there exists a positive function $f_{\Omega\backslash R}\in C(\Omega)$ corresponding to $\Omega\backslash R$ (see \ref{measure trace}) such that
  $$
  d_\tau(\alpha (\langle{f}_{\Omega\backslash R}\rangle))=\mu_{\alpha^*\tau}(\Omega\backslash R)>1-\frac{\sigma}{2},\quad \tau \in {\rm \bf  QT}_2(A).
  $$


Let $\{\sigma_n\}$ be a strictly decreasing sequence such that
  $$
  \sigma_n\leq \min\{\varepsilon_1,\varepsilon_2,\cdots\varepsilon_m\},\,\, n=1,2,\cdots,\quad {\rm and }\quad  \lim_{n\rightarrow\infty}\sigma_n=0.
  $$
  Set
  $$
  W_n={\rm supp}\{(f_{\Omega\backslash R}-\sigma_n)_+\}.
  $$
  Then $\{\mathds{1}_{W_n}\}$ is an increasing sequence in Cu$(C(\Omega))$ with suprema $\mathds{1}_{\Omega\backslash R}$. Since $\alpha$ preserves the suprema, we have
  $$
  \alpha (\mathds{1}_{\Omega\backslash R})= \sup_{n\in \mathbb{N}} \alpha(\mathds{1}_{W_n}).
  $$
  Hence,
  $$
  d_\tau(\alpha (\mathds{1}_{\Omega\backslash R}))=\lim_{n\rightarrow \infty} d_{\tau}(\alpha(\mathds{1}_{W_n})),\quad\tau\in {\rm \bf  QT}_2(A).
  $$


  From the compactness of ${\rm \bf  QT}_2(A)$, there exists $N_0$ such that
  $$
  |d_{\tau}(\alpha(\mathds{1}_{W_n}))-d_\tau(\alpha (\mathds{1}_{\Omega\backslash R}))|<\frac{\sigma}{2},\quad  n> N_0,\, \tau\in {\rm \bf  QT}_2(A).
  $$
Therefore,
  $$
  d_{\tau}(\alpha(\mathds{1}_{W_n}))>d_\tau(\alpha (\mathds{1}_{\Omega\backslash R}))- \frac{\sigma}{2}>1-\sigma,\quad  n> N_0,\,\, \tau\in {\rm \bf  QT}_2(A).
  $$

  Now  fix an integer $n_0> N_0$. Set
  $$
  \eta:\,= \sigma_{n_0},\quad W:\,=W_{n_0}.
  $$
  Note that
  $$
  \eta<\min\{\varepsilon_1,\varepsilon_2,\cdots\varepsilon_k\}
  \quad{\rm and}\quad
  d_{\tau}(\alpha(\mathds{1}_{W}))>1-\sigma,\quad \forall \, \tau\in {\rm \bf  QT}_2(A).
  $$
  We also have
  $$
  R_{\eta}=\bigcup_{i=1}^m R(x_i,s_i)_{\eta}=\Omega\backslash \overline{W}
  \quad{\rm and}\quad
  W=\Omega\backslash \overline{R_\eta}.
  $$

  Define
  $$
  O_1:=W\cap B(x_1,s_1)= B(x_1,s_1)\backslash \overline{R_\eta},
  $$
  $$
  O_2: =(W\backslash O_1) \cap B(x_2,s_2)= B(x_2,s_2)\backslash (\overline{R_\eta}\cup O_1),
  $$
  $$
  \cdots
  $$
  $$
  O_m:= (W\backslash\cup_{i=1}^{m-1}O_i) \cap B(x_m,s_m)=B(x_m,s_m)\backslash (\overline{R_\eta}\cup (\cup_{i=1}^{m-1}O_i))).
  $$
  Note that for any $i$, $\partial O_i \subset \overline{R_\eta}$,  all the $O_i$ are open sets in $W$. Let us delete the empty sets
  and rewrite those remaining as  $\{O_1,O_2, \cdots,O_N\}$.

  Let us now show that $\{O_1, O_2, \cdots,O_N\}$ is an almost $\delta$-cover with respect to $\alpha$.
  Set
  $$
  U=\bigcup_{i=1}^N O_i=\Omega\backslash \overline{R_\eta}.
  $$
  It is clear that for any $x\in \Omega$, there exists $y\in  U$ such that
  $ {\rm dist}(x,y) \leq 2\eta<\delta$. From the construction of $O_i$, for any $i\geq 1$, $O_i$ is contained in $B(x_j,s_j)$ for some $j$, and so diameter$(O_i)\leq \delta$. If $i\neq j$, then $O_i$ and $O_j$ can be  separated by $R_\eta$, and so dist$(O_i,O_j)>0$. Then (i)--(iii) hold.

  Now we check (iv). Given any $O_i$, there exists $j$ such that
  $$
  O_i\subset B(x_j,s_j)\subset B(x_j,\frac{\delta}{2})
  \quad{\rm and}\quad
  \alpha(\mathds{1}_{B(x_j,\frac{\delta}{2})})\neq 0.$$
  Set
  $$
  Y_1:=B(x_j,\frac{\delta}{2})\cap U,\quad Y_2:= B(x_j,\frac{\delta}{2})\cap\bigcup_{i=1}^m R(x_i,s_i)_{\varepsilon_i}.
  $$
  Then we have
  $$
  Y_1 \subset
  B(x_j,\frac{\delta}{2})\subset   Y_1\cup   Y_2\subset (O_i)_\delta.
  $$

  Recall that
   $$
 \sum_{i=1}^{m}d_{\tau}(\alpha(\mathds{1}_{R(x_i,s_i)_{\varepsilon_i}}))< \frac{\sigma}{2}, \quad  \tau\in {\rm \bf  QT}_2(A).
 $$
 Then
 $$ d_{\tau}(\alpha(\mathds{1}_{Y_2}))<d_\tau(\alpha(\mathds{1}_{\cup_{i=1}^m R(x_i,s_i)_{\varepsilon_i}}))<\frac{\sigma}{2},
 $$
 and hence,
 $$
 \sigma\leq d_{\tau}(\alpha(\mathds{1}_{B(x_j,\frac{\delta}{2})}))\leq d_{\tau}(\alpha(\mathds{1}_{Y_1}))+d_{\tau}(\alpha(\mathds{1}_{Y_2}))< d_{\tau}(\alpha(\mathds{1}_{Y_1}))+\frac{\sigma}{2}.
 $$
 Now we have
 $$
 d_{\tau}(\alpha(\mathds{1}_{Y_1}))>\frac{\sigma}{2}, \quad \tau\in {\rm \bf  QT}_2(A).
 $$

Since
 $$
 \Omega\backslash \overline{U}\subset \overline{R_\eta}\subset \bigcup_{i=1}^m R(x_i,s_i)_{\varepsilon_i},
 $$
  we have
 $$
 d_\tau(\alpha(\mathds{1}_{\Omega\backslash \overline{U}}))\leq  d_\tau(\alpha(\mathds{1}_{\cup_{i=1}^m R(x_i,s_i)_{\varepsilon_i}}))<\frac{\sigma}{2}<
 d_{\tau}(\alpha(\mathds{1}_{Y_1})),\,\,\tau\in {\rm \bf  QT}_2(A).
 $$

 Since $A$ has strict comparison, by Proposition \ref{sc!} and the inclusion  $Y_1\subset (O_i)_\delta\cap U$, we have
 $$
 \alpha(\mathds{1}_{\Omega\backslash \overline{U}})\leq \alpha(\mathds{1}_{Y_1})\leq \alpha(\mathds{1}_{(O_i)_\delta\cap U}).
 $$
 This ends the proof.
\end{proof}

\begin{lemma}\label{existhom}
  Let $A$ be a unital simple separable $C^*$-algebra with stable rank one, real rank zero, and strict comparison and let $\Omega$ be a compact metric space. Let $\alpha:{\rm Cu}(C(\Omega))\rightarrow {\rm Cu}(A)$ be a {\rm Cu}-morphism and $p$ is a projection in $A$. Suppose that

 {\rm  (1)} $\alpha(\mathds{1}_{\Omega})=\langle p \rangle$;

 {\rm (2)} $\Omega$ has an almost $\delta$-cover with respect to $\alpha$.

  Then there exists a $*$-homomorphism $\phi: C(\Omega)\rightarrow pAp$ with finite dimensional range such that
  $$
  d_{\rm Cu}({\rm Cu}(\phi),\alpha)<6\delta.
  $$
\end{lemma}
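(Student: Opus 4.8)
The plan is to take $\phi$ to be a point-evaluation homomorphism with finite-dimensional range. Fix a point $y_i\in O_i$ in each member of the almost $\delta$-cover; we look for mutually orthogonal projections $q_1,\dots,q_N$ in $pAp$ with $q_1+\cdots+q_N=p$ and set $\phi(f)=\sum_{i=1}^N f(y_i)q_i$. For such $\phi$ the element $\phi(f_O)$ is Cuntz equivalent to $\sum_{i:\,y_i\in O}q_i$, so $\mathrm{Cu}(\phi)(\mathds 1_O)=\sum_{i:\,y_i\in O}\langle q_i\rangle$ for every open $O\subseteq\Omega$, and everything reduces to a good choice of the classes $\langle q_i\rangle$. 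Note that $O=\Omega$ already forces $\sum_i\langle q_i\rangle=\langle p\rangle$, which is exactly why we ask that the $q_i$ sum to $p$ (this is legitimate by cancellation of projections, valid since $sr(A)=1$).

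Before choosing the $q_i$ I would replace the $O_i$ by a Borel partition of $\Omega$. Using conditions (i)--(iii) of an almost $\delta$-cover, write $\Omega=\bigsqcup_{i=1}^N P_i$ with $O_i\subseteq P_i\subseteq (O_i)_\delta$; concretely, attach to $O_i$ its boundary $\partial O_i$ and those points of $\Omega\setminus\overline U$ for which $O_i$ is the least-indexed cell realising the distance to $U$, using (i) to see that such points lie within $\delta$ of $O_i$ and (ii)--(iii) to see that the resulting sets are disjoint. For $\tau\in T(A)$ let $\mu_\tau=\mu_{\alpha^*\tau}$ be the measure of Definition \ref{measure trace}, normalised so that $\mu_\tau(\Omega)=\tau(p)$; then $\sum_i\mu_\tau(P_i)=\tau(p)$ and $d_\tau(\alpha(\mathds 1_{O_i}))=\mu_\tau(O_i)\le\mu_\tau(P_i)\le\mu_\tau((O_i)_\delta)=d_\tau(\alpha(\mathds 1_{(O_i)_\delta}))$.

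The main step --- and the one I expect to be the real obstacle --- is to cut $\langle p\rangle$ into compact pieces compatible with this partition: I want mutually orthogonal projections $q_1,\dots,q_N\le p$ with $\sum_i q_i=p$ and with $\tau(q_i)$ within an arbitrarily small tolerance $\varepsilon$ of $\mu_\tau(P_i)$, uniformly in $\tau\in T(A)$. Since $rr(A)=0$, $\mathrm{Cu}(A)$ is algebraic (Theorem \ref{rr0sr1 aic}), so one can realise compact approximants $c_i\ll\alpha(\mathds 1_{O_i})$ and lift $\sum_i c_i$ to a subprojection of $p$ by Propositions \ref{liftproj} and \ref{com sum}; the remaining projection then has to be distributed among the cells, and condition (iv) of the almost $\delta$-cover is what makes this possible, since it controls the non--lower-semicontinuous part of the prescribed rank functions $\tau\mapsto\mu_\tau(P_i)$ (the contributions of $\Omega\setminus\overline U$ and of the $\partial O_i$) by $\alpha(\mathds 1_{(O_i)_\delta\cap U})$. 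With strict comparison (Proposition \ref{sc!}), weak cancellation, and Proposition \ref{com sum} in hand, this selection can be carried out so that in addition $\langle q_i\rangle\le\alpha(\mathds 1_{(O_i)_\delta})$ for every $i$.

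Finally one checks that $\phi$ has $d_{\mathrm{Cu}}(\mathrm{Cu}(\phi),\alpha)<6\delta$. Fix an open $O\subseteq\Omega$ and take $r$ slightly larger than $2\delta$; if $O_{2\delta}=\Omega$ the inequalities are trivial (then $\mathrm{Cu}(\phi)(\mathds 1_O)\le\langle p\rangle=\alpha(\mathds 1_{O_{2\delta}})$), so assume not. If $y_i\in O$ then $\mathrm{diam}(O_i)\le\delta$ gives $O_i\subseteq O_\delta$, hence $P_i\subseteq (O_i)_\delta\subseteq O_{2\delta}$, and disjointness of the $P_i$ yields, for every $\tau$,
$$
d_\tau\big(\mathrm{Cu}(\phi)(\mathds 1_O)\big)=\sum_{i:\,y_i\in O}\tau(q_i)\le\sum_{i:\,y_i\in O}\mu_\tau(P_i)+\varepsilon=\mu_\tau\Big(\bigsqcup_{i:\,y_i\in O}P_i\Big)+\varepsilon\le\mu_\tau(O_{2\delta})+\varepsilon .
$$
Since $\tau\mapsto\tau(q_i)$ is continuous while $\tau\mapsto d_\tau(\alpha(\mathds 1_{O_s}))$ is lower semicontinuous and increasing in $s$, a Dini-type argument over the compact set $T(A)$ turns this into a strict inequality $d_\tau(\mathrm{Cu}(\phi)(\mathds 1_O))<d_\tau(\alpha(\mathds 1_{O_r}))$ for all $\tau$ (after enlarging $r$ within $(2\delta,6\delta)$ to swallow $\varepsilon$), so strict comparison gives $\mathrm{Cu}(\phi)(\mathds 1_O)\le\alpha(\mathds 1_{O_r})$. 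For the reverse estimate, if $y_i\notin O_r$ then $\mathrm{diam}(O_i)\le\delta<r$ forces $O_i\cap O=\varnothing$, hence $P_i\cap O=\varnothing$, so $\bigsqcup_{i:\,y_i\notin O_r}P_i\subseteq\Omega\setminus O$ and
$$
d_\tau(\alpha(\mathds 1_O))+\sum_{i:\,y_i\notin O_r}\tau(q_i)\le\mu_\tau(O)+\mu_\tau(\Omega\setminus O)+\varepsilon=\tau(p)+\varepsilon=\sum_i\tau(q_i)+\varepsilon ;
$$
cancelling $\sum_{i:\,y_i\notin O_r}\tau(q_i)$ and repeating the Dini plus strict-comparison argument yields $\alpha(\mathds 1_O)\le\mathrm{Cu}(\phi)(\mathds 1_{O_r})$. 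The $2\delta$ of slack coming from the cells, plus the room needed to make the comparisons strict, is comfortably absorbed by $6\delta$, which completes the argument.
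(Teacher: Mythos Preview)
Your overall shape---a point-evaluation homomorphism with one projection per cell---matches the paper, but the execution diverges and there is a genuine gap in your final comparison step. The paper does not attempt to realise $\tau(q_i)\approx\mu_\tau(P_i)$ for a Borel partition $\{P_i\}$ (your ``main step'', which you correctly flag as an obstacle and leave only sketched). Instead it works purely in the Cu-order: with $\rho=\tfrac14\min\{\delta,\mathrm{dist}(O_i,O_j):i\neq j\}$ the enlarged cells $(O_i)_{2\rho}$ remain pairwise disjoint, so one takes compact $x_i$ with $\alpha(\mathds 1_{(O_i)_\rho})\le x_i\le\alpha(\mathds 1_{(O_i)_{2\rho}})$ (algebraicity of $\mathrm{Cu}(A)$), lifts to orthogonal projections $p_1,\dots,p_N\le p$, and---crucially---does \emph{not} require $\sum p_i=p$. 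The leftover $p_0=p-\sum p_i$ is kept as an additional summand, placed at an extra evaluation point $z_0\in\Omega\setminus\overline U$; weak cancellation together with condition (iv) gives $\langle p_0\rangle\le\alpha(\mathds 1_{\Omega\setminus\overline U})\le\alpha(\mathds 1_{(O_k)_\delta\cap U})$ for every $k$. The bound $d_{\mathrm{Cu}}<6\delta$ is then obtained by a direct Cu-order computation (a two-case split according to whether $V_{5\delta}\setminus V_{3\delta}$ contains an entire cell $O_k$), with no appeal to traces or strict comparison.

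The problem with your route is the ``Dini-type argument''. You reach $d_\tau(\mathrm{Cu}(\phi)(\mathds 1_O))\le\mu_\tau(O_{2\delta})+\varepsilon$ and want to enlarge $r\in(2\delta,6\delta)$ so that this becomes strictly less than $d_\tau(\alpha(\mathds 1_{O_r}))$ for every $\tau$. But enlarging $r$ need not increase $d_\tau(\alpha(\mathds 1_{O_r}))$ at all: if $O_{2\delta}$ happens to be a proper clopen subset of $\Omega$ (nothing in the hypotheses rules this out), then $\alpha(\mathds 1_{O_r})=\alpha(\mathds 1_{O_{2\delta}})$ for all nearby $r$, and you are stuck with $d_\tau(\mathrm{Cu}(\phi)(\mathds 1_O))\le d_\tau(\alpha(\mathds 1_{O_r}))+\varepsilon$, the wrong direction for strict comparison. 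Since $\varepsilon$ is fixed once the $q_i$ are built but $O$ ranges over all open sets, there is no uniform way to swallow it; the same obstruction appears in the reverse inequality. The paper's device avoids this entirely: because the $(O_i)_{2\rho}$ are pairwise disjoint and $I_0\cap I_1=\varnothing$ in the case analysis, the bound on $\mathrm{Cu}(\phi)(\mathds 1_V)$ is a sum of $\alpha$'s over pairwise disjoint open sets, hence equals $\alpha$ of their union, which sits inside $V_{6\delta}$---no $\varepsilon$-loss anywhere.
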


\begin{proof}
Suppose that $\{O_1,O_2,\cdots, O_N\}$ is an almost $\delta$-cover respect to $\alpha$.

Let
  $$
  U=\bigcup_{i=1}^N O_i,\,\,\,\rho=\frac{1}{4}\min\{\delta,{\rm dist}(O_i,O_j),\, i\neq j, \,1\leq i,j\leq N\}.
  $$
  Then  the facts that 
  $\mathds{1}_{O_i}\ll \mathds{1}_{(O_i)_{\rho}}$ and $\alpha$ preserves the compact containment relation imply that
  $$
  \alpha(\mathds{1}_{O_i})\ll \alpha(\mathds{1}_{(O_i)_{\rho}}) \ll \alpha(\mathds{1}_{(O_i)_{2\rho}}).
  $$

  Since ${\rm Cu}(A)$ is algebraic (see \ref{rr0sr1 aic}), for each $i$, there exists an increasing sequence of compact elements $\{x_i^n\}_n$ with supremum $\alpha(\mathds{1}_{(O_i)_{2\rho}})$. From the compact containment relation, there exists $n_i\in \mathbb{N}$ such that $\alpha(\mathds{1}_{(O_i)_{\rho}})\leq x_i^{n_i}$. For convenience, we use   $x_i$ to denote $x_i^{n_i}$; then,
  $$
  \alpha(\mathds{1}_{(O_i)_{\rho}})\leq x_i\leq\alpha(\mathds{1}_{(O_i)_{2\rho}}).
  $$
  Now we have
  $$
  x_1+x_2+\cdots+x_N\leq \alpha(\mathds{1}_{\cup_{i=1}^N(O_i)_{2\rho}})\leq \langle p\rangle.
  $$
  By Proposition \ref{com sum}, there exists a collection of mutually orthogonal projections $\{p_i\}$ such that
  $$
  \langle p_i \rangle=x_i,\quad i=1,2,\cdots,N
  $$
and
$$
p_1+p_2+\cdots+p_N\leq p.
$$

  Set $p_0=p-\sum_{i=1}^{N}p_i$. Note that
  $$
   \langle  p_0 \rangle +\sum_{i=1}^{N} \langle p_i \rangle =\alpha(\mathds{1}_{\Omega})\ll \alpha(\mathds{1}_{\Omega\backslash \overline{U}})+\alpha(\mathds{1}_{U_\rho})
  $$
  and
  $$
   \alpha(\mathds{1}_{U_\rho})\ll\alpha(\mathds{1}_{\cup_{i=1}^N(O_i)_{2\rho}})\ll \sum_{i=1}^{N} \langle p_i \rangle.$$
   By  weak cancellation in ${\rm Cu}(A)$ (Definition \ref{wk cancel}), we have
  $$
   \langle  p_0 \rangle  \leq \alpha(\mathds{1}_{\Omega\backslash \overline{U}})\leq \alpha(\mathds{1}_{(O_k)_\delta\cap U}),\quad \forall \,k=1,2\cdots,N.
  $$

   Now choose $z_0\in \Omega\backslash\overline{U}$ and $z_i\in O_i$ ($1\leq i\leq N$). Define
  $$
  \phi(f)= \sum_{i=0}^{N}f(z_i)p_i,\quad  f\in C(\Omega).
  $$
  Then we need to show $d_{\rm Cu}({\rm Cu}(\phi),\alpha)<6\delta$.

  For any open set $V\subset \Omega$, we have $V_\delta \cap U\neq \varnothing$. Now we consider the following two cases:

 {\bf Case 1}:  There exists $k\in \{1,2,\cdots,N\}$ such that
  $
  O_k\subset V_{5\delta}\backslash V_{3\delta}.
  $

  Define index sets
  $$
  I_0=\{i\mid V\cap (O_i)_\rho\neq \varnothing,\,1\leq i\leq N\},$$
  $$
  I_1=\{i\mid O_i\cap (O_k)_\delta\neq \varnothing,\,1\leq i\leq N\}.
  $$
  If $i\in I_1$,  then $O_i\cap V_{2\delta}=\varnothing$, we have $I_0\cap I_1=\varnothing$. We also note that
  $$
  \bigcup_{i\in I_0} (O_i)_{2\rho}\cup (\bigcup_{i\in I_1} O_i)\subset V_{6\delta}.
  $$

  Then we have
  \begin{eqnarray*}
     {\rm Cu}(\phi)(\mathds{1}_{V}) &\leq & \langle p_0 \rangle+\sum_{z_i\in V,\,i\neq0}  \langle p_i \rangle \\
     &\leq & \alpha(\mathds{1}_{(O_k)_\delta\cap U})+ \sum_{i\in I_0} \langle p_i \rangle \\
     &\leq & \sum_{i\in I_1} \alpha(\mathds{1}_{O_i})+ \sum_{i\in I_0} \alpha(\mathds{1}_{(O_i)_{2\rho}})  \\
     &\leq & \alpha(\mathds{1}_{V_{6\delta}}).
  \end{eqnarray*}

  Note that
  $$
  V\subset (V\cap \, \Omega\backslash \overline{U})\cup (V\cap U_\rho).
  $$
  Now we have
  \begin{eqnarray*}
    \alpha(\mathds{1}_{V}) &\leq & \alpha(\mathds{1}_{V\cap \,\Omega\backslash \overline{U}})+\alpha(\mathds{1}_{V\cap U_\rho})\\
     &\leq & \alpha(\mathds{1}_{{(O_k)_\delta\cap U}})+\alpha(\mathds{1}_{V\cap U_\rho}) \\
     &\leq & \sum_{i\in I_1} \alpha(\mathds{1}_{O_i})+\sum_{i\in I_0} \alpha(\mathds{1}_{(O_i)_\rho}) \\
     &\leq & \sum_{i\in I_1} \langle p_i \rangle +\sum_{i\in I_0 } \langle p_i \rangle  \\
     &\leq & {\rm Cu}(\phi)(\mathds{1}_{V_{6\delta}}).
  \end{eqnarray*}

 {\bf Case 2}: There doesn't exist $k\in \{1,2,\cdots,N\}$ such that
  $
  O_k\subset V_{5\delta}\backslash V_{3\delta}.
  $

In this case, we must have $V_{5\delta}=\Omega$, or else, there exists $z\in V_{5\delta}\backslash V_{4\delta}$ such that ${\rm dist}(z, U)>\delta$, this contradicts (i). It is clear that
$$
 {\rm Cu}(\phi)(\mathds{1}_{V}) \leq  {\rm Cu}(\phi)(\mathds{1}_{V_{5\delta}})=\alpha(\mathds{1}_{\Omega})
$$
and
  $$
  \alpha(\mathds{1}_{V})\leq
  \alpha(\mathds{1}_{V_{5\delta}})= {\rm Cu}(\phi)(\mathds{1}_{\Omega}).
  $$

  Combining these two cases, we have
  $$
  d_{\rm Cu}({\rm Cu}(\phi),\alpha)<6\delta.
  $$
\end{proof}

Now we must consider the possibility that certain open sets in the covering may be transformed into zero by the Cu-morphism. In such situations, it is essential to delicately organize the open sets into appropriate groupings. Therefore, we introduce the following concept.

\begin{definition}\rm
  Let $\Omega$ be a compact metric space and $\mathcal{F}$ be a finite  collection of open subsets of $\Omega$. Let $X,Y\in \mathcal{F}$, we say $X$ and $Y$ are almost connected if  there exists a sequence of sets $X=\Omega_1, \Omega_2, \cdots,\Omega_n=Y$ in $\mathcal{F}$ such that for each $i$, $\Omega_i\in \mathcal{F}$ and $\Omega_i\cap \Omega_{i+1}\neq \varnothing$. Under this relation, $\mathcal{F}$ has finite almost connected components.
\end{definition}

\begin{theorem}\label{liftthm}
Let $A$ be a simple, separable $C^*$-algebras with stable rank one, real rank zero and strict comparison and let $\Omega$ be a compact metric space.  
 Let $\alpha:{\rm Cu}(C(\Omega))\rightarrow {\rm Cu}(A)$ be a {\rm Cu}-morphism with $\alpha(\mathds{1}_{\Omega})\leq \langle 1_A \rangle$. Then for any $\varepsilon>0$, there exists a $*$-homomorphism $\phi: C(\Omega)\rightarrow A$ such that
  $$
  d_{\rm Cu}({\rm Cu}(\phi),\alpha)<\varepsilon.
  $$
\end{theorem}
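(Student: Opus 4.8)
The plan is to combine the two preparatory lemmas---Lemma~\ref{exist cover} (existence of almost $\delta$-covers) and Lemma~\ref{existhom} (construction of a finite-dimensional homomorphism from an almost $\delta$-cover)---and to deal with the two technical obstructions that stand between the hypotheses of the theorem and the hypotheses of those lemmas: first, that $\alpha(\mathds{1}_\Omega)$ is only $\leq\langle 1_A\rangle$ rather than equal to a projection, and second, that some of the small balls $B(x_i,\delta/2)$ in a fine cover of $\Omega$ may be killed by $\alpha$, so that condition (2) of Lemma~\ref{exist cover} fails on the nose.

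First I would reduce to the case $\alpha(\mathds{1}_\Omega)=\langle p\rangle$ for a projection $p\in A$. Since $rr(A)=0$, $\mathrm{Cu}(A)$ is algebraic (Theorem~\ref{rr0sr1 aic}), so $\alpha(\mathds{1}_\Omega)$ is the supremum of an increasing sequence of compact elements; by weak cancellation and strict comparison one can choose a compact element $c\leq\alpha(\mathds{1}_\Omega)$ approximating it closely enough, and by Proposition~\ref{liftproj} realize $c=\langle p\rangle$ for a projection $p\leq 1_A$. Actually it is cleaner to keep $\alpha(\mathds{1}_\Omega)$ as is and instead observe that in Lemma~\ref{existhom} only the inequality $x_1+\cdots+x_N\leq\langle p\rangle$ is used; so I would first pass, via the algebraicity of $\mathrm{Cu}(A)$, to a compact $q\ll\alpha(\mathds{1}_\Omega)$ with $q=\langle p\rangle$ (Proposition~\ref{liftproj}), close enough that $d_{\mathrm{Cu}}$-estimates are unaffected, and arrange all the subequivalences $\alpha(\mathds{1}_{(O_i)_\rho})\leq x_i$ below $\langle p\rangle$; the leftover part is absorbed exactly as in the proof of Lemma~\ref{existhom}. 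The key quantitative point is that shrinking $\alpha(\mathds{1}_\Omega)$ to a nearby compact element perturbs $d_{\mathrm{Cu}}$ by an amount controllable in terms of the mesh, so it can be folded into the final $\varepsilon$.

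Next I would handle the vanishing balls. Fix $\delta>0$ small (to be chosen as $\varepsilon/7$ or so) and take a finite cover of $\Omega$ by balls $B(x_1,\delta/4),\dots,B(x_m,\delta/4)$. Partition the centers according to whether $\alpha(\mathds{1}_{B(x_i,\delta/2)})=0$ or not. Let $Z$ be the (closed) set of points all of whose nearby balls are annihilated; using that $\alpha$ preserves suprema of increasing sequences and that $\mathds{1}_\Omega$ is the supremum of the $\mathds{1}_O$ over a nested exhaustion, one checks $\alpha(\mathds{1}_{Z_\delta\cap\text{(killed region)}})=0$, so the entire "dead" part of $\Omega$ contributes nothing to $\alpha$. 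Using the \emph{almost connected components} notion, I would group the surviving balls into finitely many clusters $C_1,\dots,C_\ell$, each of diameter $O(\delta)$ when the ambient space behaves well---here one uses that $\Omega$ is a compact subset of $\mathbb{C}$, or more generally just that the components are separated---and apply Lemma~\ref{exist cover} on each cluster (or directly verify that the union of the surviving balls, suitably regrouped, satisfies hypotheses (1),(2) of that lemma after restricting attention to the support of $\alpha$). This yields an almost $\delta'$-cover of the "live" part of $\Omega$ with $\delta'$ comparable to $\delta$, and with respect to $\alpha$. I expect this grouping step---making the bookkeeping of almost connected components interact correctly with the measure estimates of Lemma~\ref{smalllem}---to be the main obstacle, since one must simultaneously keep the clusters small, keep the open sets mutually separated, and preserve condition (iv) of the almost-$\delta$-cover definition.

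Finally, with an almost $\delta'$-cover of (the support of $\alpha$ in) $\Omega$ in hand and $\alpha(\mathds{1}_\Omega)=\langle p\rangle$ arranged, Lemma~\ref{existhom} produces a $*$-homomorphism $\phi\colon C(\Omega)\to pAp\subseteq A$ with finite-dimensional range and $d_{\mathrm{Cu}}(\mathrm{Cu}(\phi),\alpha)<6\delta'$. Choosing $\delta$ (hence $\delta'$) small enough at the outset that $6\delta'$ plus the perturbation incurred in replacing $\alpha(\mathds{1}_\Omega)$ by a nearby compact element is $<\varepsilon$ completes the proof. The only genuinely new content beyond assembling the lemmas is the reduction in the first and second paragraphs; the rest is a matter of choosing constants.
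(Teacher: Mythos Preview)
Your outline has the right shape---use almost connected components to sidestep the balls killed by $\alpha$, then feed the pieces through Lemmas~\ref{exist cover} and~\ref{existhom}---but there are two genuine gaps and one unnecessary detour.

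\textbf{The detour.} Your first reduction is not needed: since $\Omega$ is compact, $\mathds{1}_\Omega$ is a projection in $C(\Omega)$, so $\langle\mathds{1}_\Omega\rangle$ is compact in $\mathrm{Cu}(C(\Omega))$, and because a Cu-morphism preserves $\ll$, the element $\alpha(\mathds{1}_\Omega)$ is already compact in $\mathrm{Cu}(A)$. Proposition~\ref{liftproj} then gives a projection $p$ with $\langle p\rangle=\alpha(\mathds{1}_\Omega)$ exactly. No approximation, no perturbation of $d_{\mathrm{Cu}}$.

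\textbf{The first gap.} Your claim that the clusters $C_1,\dots,C_\ell$ have diameter $O(\delta)$ is false. An almost connected component is a maximal chain of overlapping $\delta/4$-balls; in a connected $\Omega$ with $\alpha$ nowhere vanishing there is a single cluster equal to all of $\Omega$. The clusters are only guaranteed to be \emph{separated from each other}, not small. The paper does not need them small: Lemma~\ref{exist cover} is applied to each $\Omega_i$ with the given $\delta$, using only that the balls covering $\Omega_i$ all satisfy $\alpha(\mathds{1}_{B(x_j,\delta/2)})\neq 0$.

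\textbf{The second, and central, gap.} To invoke Lemma~\ref{existhom} on a piece $\Omega_i$ you need a projection $p_i$ with $\langle p_i\rangle=\alpha(\mathds{1}_{\Omega_i})$ \emph{exactly} (the proof of that lemma uses equality, via weak cancellation, to bound $\langle p_0\rangle$). So you must show that each $\alpha(\mathds{1}_{\Omega_i})$ is compact. This is not automatic: $\Omega_i$ is open in $\Omega$, so $\mathds{1}_{\Omega_i}$ is typically not compact in $\mathrm{Cu}(C(\Omega))$. The paper's key step is precisely this: one takes $\ll$-increasing sequences $a_{n,i}\nearrow\mathds{1}_{\Omega_i}$, uses that $\sum_i\alpha(\mathds{1}_{\Omega_i})=\alpha(\mathds{1}_\Omega)$ is compact to find a single $k$ with $\sum_i\alpha(a_{k,i})=\alpha(\mathds{1}_\Omega)$, and then applies weak cancellation to the equation to force $\alpha(\mathds{1}_{\Omega_i})\leq\alpha(a_{k,i})\ll\alpha(\mathds{1}_{\Omega_i})$. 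You do not mention this argument, and your proposed workaround---approximate each summand by a compact below it---does not obviously control the resulting $d_{\mathrm{Cu}}$-error in terms of $\delta$, since the discrepancy between $\alpha(\mathds{1}_{\Omega_i})$ and a compact approximant is a Cuntz-semigroup quantity, not a metric one on $\Omega$. Once compactness of each $\alpha(\mathds{1}_{\Omega_i})$ is in hand, Proposition~\ref{com sum} gives mutually orthogonal projections $p_i$, one defines restricted Cu-morphisms $\alpha_i(h)=\alpha(h|_{\Omega_i})$, applies Lemmas~\ref{exist cover} and~\ref{existhom} to each $(\Omega_i,\alpha_i,p_i)$, and combines via Proposition~\ref{pairprop}.
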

\begin{proof}
  Since $\Omega$ is compact, for $\delta=\varepsilon/6$, there exist  $x_1,x_2,\cdots,x_m\in \Omega$ such that
  $$
  \Omega=\bigcup_{i=1}^mB(x_i,{\delta/4}).
  $$

Denote
$$
\Lambda=\{1,2,\cdots,m\},
$$
$$
\mathcal{F}=\{B(x_i,{\delta/4})\mid \alpha(\mathds{1}_{B(x_i,\delta/4)})\neq 0 \}.
$$
Then $\mathcal{F}$ has finite almost connected components $\mathcal{F}_1,\cdots,\mathcal{F}_{l}$.

For each $i\in \{1,2,\cdots,l\}$, we  also define
$$
\Lambda_i=\{j\mid B(x_j,{\delta/4})\in \mathcal{F}_{i}\},\,\,\Lambda_0=\Lambda\backslash \bigcup_{i=1}^l \Lambda_i,
$$
$$
\Omega_i= \bigcup_{j\in \Lambda_i}B(x_j,{\delta/4}),\,\, \Omega_0=\bigcup_{j\in \Lambda_0} B(x_j,{\delta/4}).
$$
(One may say that $\Omega_1,\cdots,\Omega_l$ are ``separated" by $\Omega_0$.)
If $\Omega_0=\varnothing$, then by Lemma \ref{exist cover} and Lemma \ref{existhom}, the conclusion is true.

Assume that $\Omega_0\neq \varnothing$. Since $\Omega_i\cap \Omega_j\neq \varnothing$ for any $i\neq j$, we have
$$
\alpha(\mathds{1}_\Omega)\leq\sum_{i=0}^l\alpha(\mathds{1}_{\Omega_i})
=\sum_{i=1}^l\alpha(\mathds{1}_{\Omega_i})
\leq\alpha(\mathds{1}_\Omega).
$$
Thus,
$$
\alpha(\mathds{1}_{\Omega_1})+\alpha(\mathds{1}_{\Omega_2})+
\cdots+\alpha(\mathds{1}_{\Omega_l})
=\alpha(\mathds{1}_\Omega).$$ 
Now we will prove that $\alpha(\mathds{1}_{\Omega_i})$ is compact for each $i\in \{1,2,\cdots,l\}$.

For each $i$, let $\{a_{n,i}\}_n$ be a $\ll$-increasing sequence in ${\rm Lsc}(\Omega,\overline{\mathbb{N}})$ with supremum $\mathds{1}_{\Omega_i}$. Set $b_{n}=a_{n,1}+a_{n,2}+\cdots+a_{n,l},$
then $\{b_{n}\}_n$ is also a $\ll$-increasing sequence with supremum
$$\sup_n b_{n}= \sup_n a_{n,1}+ \sup_n a_{n,2}+\cdots+\sup_n a_{n,l}.$$

Since $\alpha$ preserves suprema of increasing sequences, then we have
\begin{eqnarray*}
  \sup_n \alpha(b_{n}) &=& \sup_n\alpha( a_{n,1})+ \sup_n \alpha(a_{n,2})+\cdots+\alpha(\sup_n a_{n,l}) \\
   &=& \alpha(\mathds{1}_{\Omega_1})+\alpha(\mathds{1}_{\Omega_2})+\cdots+ \alpha(\mathds{1}_{\Omega_l}) \\
   &=& \alpha(\mathds{1}_{\Omega}).
\end{eqnarray*}
From the compactness of $\alpha(\mathds{1}_{\Omega})$, there exists $k\in \mathbb{N}$ such that $\alpha(b_{k})= \alpha(\mathds{1}_{\Omega})$, i.e.,
$$
\alpha(a_{k,1})+ \alpha(a_{k,2})+\cdots+\alpha(a_{k,l})=\alpha(\mathds{1}_{\Omega_1})+\alpha(\mathds{1}_{\Omega_2})+\cdots+ \alpha(\mathds{1}_{\Omega_l}).
$$
Since we have $a_{k,m}\ll \mathds{1}_{\Omega_m}$ (in ${\rm Lsc}(\Omega,\overline{\mathbb{N}})$) for any $m=1,2,\cdots,l$, then
$$
\sum_{m\neq i}\alpha(a_{k,m})\ll \sum_{m\neq i}\alpha(\mathds{1}_{\Omega_m})\quad ({\rm in}\,\,\, {\rm Cu}(A)).
$$
From the weak cancellation of ${\rm Cu}(A)$, we have
$$
\alpha(\mathds{1}_{\Omega_i})\leq \alpha(a_{k,i})\ll \alpha(\mathds{1}_{\Omega_i}).
$$
This means that $\alpha(\mathds{1}_{\Omega_i})$ is compact in ${\rm Cu}(A)$.

Since we have
$
\alpha(\mathds{1}_{\Omega_1})+\alpha(\mathds{1}_{\Omega_2})+
\cdots+\alpha(\mathds{1}_{\Omega_l})
=\alpha(\mathds{1}_\Omega)$,
by Proposition \ref{com sum}, there exists a collection of mutually orthogonal projections $\{p_i\}$ such that
  $$
  \langle p_i \rangle=\alpha(\mathds{1}_{\Omega_i}),\quad i=1,2,\cdots,l
  $$
and
$$
p_1+p_2+\cdots+p_i\leq e.
$$

Let $h(t)\in {\rm Lsc}(\Omega,\overline{\mathbb{N}})$. For any open set $V\subset \Omega$,
define
$$
h|_{V}(t)=
\begin{cases}
         h(t), & \mbox{if } t\in V \\
         0, & \mbox{if}\,\, t\notin V.
\end{cases}
$$
For each $i\in\{1,2,\cdots,l\}$, define $\alpha_i$ as follows:
$$
\alpha_i(h(t))=\alpha(h|_{\Omega_i}(t)).
$$
It can be checked that $\alpha_1,\alpha_2,\cdots,\alpha_l$ are Cu-morphisms from ${\rm Lsc}(\Omega,\overline{\mathbb{N}})$ to ${\rm Cu}(A)$. We also have
$$
\alpha_1+\alpha_2+\cdots+\alpha_l=\alpha.
$$

For each $i$, we apply Lemma \ref{existhom} for $\Omega_i$, $\delta$, $p_i$ and $\alpha_i$ (the key point is that $\alpha(\mathds{1}_{\Omega_i})$ is compact); this gives  $\phi_i:C(\Omega)\rightarrow p_iAp_i$ such that
$$
d_{\rm Cu}({\rm Cu}(\phi_i),\alpha_i)<6\delta.
$$
Denote
$
\phi=\sum_{i=1}^{l}\phi_i.
$
Since $\phi_1,\phi_2, \cdots,\phi_l$ have mutually orthogonal ranges, we have
$$
{\rm Cu}(\phi_1)+{\rm Cu}(\phi_2)+\cdots+{\rm Cu}(\phi_l)={\rm Cu}(\phi).
$$
By Proposition \ref{pairprop}, we obtain
$$
d_{\rm Cu}({\rm Cu}(\phi),\alpha)<6\delta=\varepsilon.
$$

\end{proof}
\begin{remark}
In most cases, we assume that $\Omega$ is a compact space, but we point out that the main point is $\alpha(\mathds{1}_{\Omega})$ is compact in ${\rm Cu}(A)$. In the presence of stable rank one, $\alpha(\mathds{1}_{\Omega})$ can be lifted to a projection $p$ in $A$, and then we may regard $\alpha$ as a Cu-morphism from ${\rm Cu}(C(\Omega))$ to ${\rm Cu}(pAp)$. We also note that if $A$ is a simple, separable $C^*$-algebra with strict comparison, then $pAp$ also has strict comparison and $K_0(A)$ is weakly unperforated; in this case, if $A$ has real rank zero, then $A$ has stable rank one.  
\end{remark}

\section{Classification Results}

Denote by $\mathcal{C}$ the class of all simple, separable $C^*$-algebras with  real rank zero and strict comparison. In this section, we give classification results for both the unital case and the non-unital case.

\begin{definition} \rm \label{def cls}
 Let $A$ and $B$ be $C^*$-algebras such that $A$ has a strictly positive element $s_A$.  Let us say that  {\it the functor} ${\rm Cu}$ {\it classifies the pair} $(A, B)$ if for any Cu-morphism $$\alpha: \mathrm{Cu}(A) \rightarrow \mathrm{Cu}(B)$$ such that $\alpha(\langle s_A\rangle) \leq\langle s_B\rangle$, where $s_B$ is a positive element of $B$, there exists a $*$-homomorphism $\phi: A \rightarrow B$, unique up to approximate unitary equivalence, such that $\alpha=\mathrm{Cu}(\phi)$.
\end{definition}

\begin{theorem}\label{thm c A}
  Let $\Omega$ be a compact subset of  $\mathbb{C}$
  and $A\in \mathcal{C}$. Suppose that $\alpha:{\rm Cu}(C(\Omega))\rightarrow {\rm Cu}(A)$ is a {\rm Cu}-morphism with $\alpha(\mathds{1}_{\Omega})\leq \langle 1_A \rangle$, then there exists a homomorphism $\phi: C(\Omega)\rightarrow A$ such that ${\rm Cu}(\phi)=\alpha$.
  In particular, if $K_1(A)$ is trivial, then ${\rm Cu}$ classifies the pair $(C(\Omega),A)$.
\end{theorem}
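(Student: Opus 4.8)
The plan is to combine the approximate lifting of Theorem~\ref{liftthm} with the Hu--Lin metric comparison of Theorem~\ref{HL K1} in an approximate-intertwining argument, routed through the normal elements attached to the approximating homomorphisms. First, applying Theorem~\ref{liftthm} to $A$, I would choose $*$-homomorphisms $\phi_n\colon C(\Omega)\to A$ with $d_{\rm Cu}({\rm Cu}(\phi_n),\alpha)<1/n$; by the construction in Lemma~\ref{existhom} each $\phi_n$ has finite-dimensional range. Since $\Omega_r=\Omega$ for every $r>0$, the estimate $d_{\rm Cu}({\rm Cu}(\phi_n),\alpha)<1/n$ already forces $\langle\phi_n(\mathds 1_\Omega)\rangle=\alpha(\mathds 1_\Omega)$, which is thus the class of a projection; and since $A$ has stable rank one (so has cancellation of projections), after conjugating each $\phi_n$ by a unitary I may assume $\phi_n(\mathds 1_\Omega)=p$ for a fixed projection $p$ and regard the $\phi_n$ as unital maps into the corner $pAp$. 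This corner is again simple, separable, of real rank zero and with strict comparison --- hence of stable rank one, with weakly unperforated $K_0$, and with $K_1(pAp)\cong K_1(A)$ (cf.\ the remark following Theorem~\ref{liftthm}) --- while $pAp\hookrightarrow A$ induces an isomorphism ${\rm Cu}(pAp)\cong{\rm Cu}(A)$, so it costs nothing to work inside $pAp$, where $\alpha(\mathds 1_\Omega)$ becomes the class of the unit. Finally, put $x_n=\phi_n({\rm id})$: this is normal with finite spectrum, so $\mathbb C\setminus{\rm sp}(x_n)$ is connected, and evaluating the index at a large $\lambda$ (where $\lambda-x_n$ lies in the identity component of the invertibles) gives ${\rm ind}(x_n)=0$; in particular all the $x_n$ share the same index.

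Next, by Remark~\ref{d Cu pro}(iii), together with the coincidence --- for homomorphisms $C(\Omega)\to pAp$ whose spectra are closed subsets of $\Omega$ --- of the Cuntz distance with the Cuntz distance of the associated normal elements (Definition~\ref{hom normal}), one has $d_W(x_n,x_m)=d_{\rm Cu}({\rm Cu}(\phi_n),{\rm Cu}(\phi_m))\le 1/n+1/m\to 0$. As the $x_n$ have equal index, Theorem~\ref{HL K1} gives $d_U(x_n,x_m)\le 2d_W(x_n,x_m)\to 0$. Passing to a subsequence with $d_U(x_n,x_{n+1})<2^{-n}$ and telescoping, I would produce unitaries $w_n\in pAp$ with $\|w_{n+1}x_{n+1}w_{n+1}^*-w_nx_nw_n^*\|<2^{-n}$, so that $y_n:=w_nx_nw_n^*$ converges in norm to a normal $y\in pAp$ with ${\rm sp}(y)\subset\Omega$ (upper semicontinuity of the spectrum and compactness of $\Omega$). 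Set $\phi\colon C(\Omega)\to pAp$, $\phi(f)=f(y)$, and $\psi_n\colon C(\Omega)\to pAp$, $\psi_n(f)=w_n\phi_n(f)w_n^*$; then ${\rm Cu}(\psi_n)={\rm Cu}(\phi_n)$ and $\psi_n({\rm id})=y_n\to y$ in norm, so $d_W(\psi_n,\phi)\to 0$ by Theorem~\ref{dU 0 dW}(1), i.e.\ $d_{\rm Cu}({\rm Cu}(\psi_n),{\rm Cu}(\phi))\to 0$. With $d_{\rm Cu}({\rm Cu}(\psi_n),\alpha)=d_{\rm Cu}({\rm Cu}(\phi_n),\alpha)\to 0$ and the triangle inequality (Remark~\ref{d Cu pro}(ii)) this gives $d_{\rm Cu}({\rm Cu}(\phi),\alpha)=0$, whence ${\rm Cu}(\phi)=\alpha$ because $d_{\rm Cu}$ is a metric (Proposition~\ref{Cumetric}). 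Composing with $pAp\hookrightarrow A$ yields the desired homomorphism $C(\Omega)\to A$.

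For the final assertion, assume $K_1(A)=0$ and let $\phi,\psi\colon C(\Omega)\to A$ satisfy ${\rm Cu}(\phi)={\rm Cu}(\psi)=\alpha$. The projections $\phi(\mathds 1_\Omega)$ and $\psi(\mathds 1_\Omega)$ have the same class $\alpha(\mathds 1_\Omega)$, hence are Murray--von Neumann equivalent (Lemma~\ref{R lem}(iii)); conjugating $\phi$ by a unitary --- which moves it only within its approximate unitary equivalence class --- I may assume $\phi(\mathds 1_\Omega)=\psi(\mathds 1_\Omega)=q$ and regard both as unital maps $C(\Omega)\to qAq$. Then $d_W(\phi,\psi)=d_{\rm Cu}(\alpha,\alpha)=0$ (Remark~\ref{d Cu pro}(iii)) and ${\rm ind}(\phi({\rm id}))={\rm ind}(\psi({\rm id}))$, both being $0$ since $K_1(qAq)\cong K_1(A)=0$; so Theorem~\ref{dU 0 dW}(2), applied in $qAq$, gives $d_U(\phi,\psi)=0$, i.e.\ $\phi$ and $\psi$ are approximately unitarily equivalent. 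Combined with the existence part and Definition~\ref{def cls} (taking $s_A=\mathds 1_\Omega$, $s_B=1_A$), this shows ${\rm Cu}$ classifies the pair $(C(\Omega),A)$.

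The crux is the second step: only approximate lifts are available, and to upgrade them to an exact one I must control the $K_1$-valued index obstruction along the approximation --- which here vanishes automatically because the lifts from Lemma~\ref{existhom} have finite-dimensional range --- and then feed the \emph{quantitative} Hu--Lin inequality $d_U\le 2d_W$ into a telescoping argument to build a norm-convergent sequence of normal elements whose limit carries exactly the Cuntz class $\alpha$. A minor technical point to verify along the way is the coincidence of the Cuntz distance between homomorphisms $C(\Omega)\to A$ with the Cuntz distance between their underlying normal elements (with spectra in $\Omega$).
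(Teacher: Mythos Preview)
Your proposal is correct and follows essentially the same route as the paper's proof: approximate lifts with finite-dimensional range via Theorem~\ref{liftthm}, vanishing index for the associated normal elements, the Hu--Lin inequality $d_U\le 2d_W$ (Theorem~\ref{HL K1}) fed into a telescoping/Cauchy argument, then Theorem~\ref{dU 0 dW}(1) and the metric property of $d_{\rm Cu}$ (Proposition~\ref{Cumetric}) to conclude ${\rm Cu}(\phi)=\alpha$, with uniqueness via Theorem~\ref{dU 0 dW}(2). You are in fact more careful than the paper on two points it leaves implicit (cf.\ the remark after Theorem~\ref{liftthm}): the reduction to a corner $pAp$ so that Theorem~\ref{HL K1} applies to unital maps, and the explicit justification that ${\rm ind}(x_n)=0$ via connectedness of $\mathbb{C}\setminus{\rm sp}(x_n)$.
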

\begin{proof}
Let $\alpha: \mathrm{Cu}(C(\Omega)) \rightarrow \mathrm{Cu}(A)$ be a Cu-morphism such that $\alpha(\mathds{1}_\Omega)\leq\langle s_B\rangle$, By Theorem \ref{liftthm}, there exists a sequence of homomorphisms $\phi_n$ with finite dimensional range such that $d_{\rm Cu}({\rm Cu}(\phi_n),\alpha)\rightarrow 0$. Let $x_n=\phi_n({\rm id})$ and $\varepsilon>0$. Then we have $[\lambda-x_n]=0$ in $K_1(A)$
for all $\lambda \notin \operatorname{sp}(x_n)$.  By Theorem \ref{HL K1} and Remark 2.5, there exists $N
_1>0$ such that
$$
d_U(x_{n},x_{m})\leq 2d_W(x_{n},x_{m})<\frac{\varepsilon}{2},\quad n,m\geq N_1.
$$
Then for $\varepsilon/ 2^2$, there exists $N_2> N_1$ such that
$$
d_U(x_{n},x_{m})<\frac{\varepsilon}{2^2},\quad n,m\geq N_2.
$$
Similarly, for any $k$, there exists $N_k> N_{k-1}$ such that
$$
d_U(x_{n},x_{m})<\frac{\varepsilon}{2^k},\quad ,n,m\geq N_k.
$$
Then for each $k\geq 1$, there exists a unitary $u_k\in A$ such that
$$ \|x_{N_k}- u_k^* x_{N_{k+1}}u_k\|<\frac{\varepsilon}{2^k}.$$

Write
\begin{eqnarray*}
  \widetilde{x}_1 &=:& x_{N_1}, \\
    \widetilde{x}_2 &=:& u_1^*x_{N_2}u_1, \\
   &\vdots&  \\
    \widetilde{x}_k &=:& (u_{k-1}\cdots u_2u_1)^*x_{N_k}u_{k-1}\cdots u_2u_1,\\
       &\vdots&
\end{eqnarray*}
Then $\{\widetilde{x}_k\}$ is a Cauchy sequence. We may assume that $\widetilde{x}_k\rightarrow x$. Note that all the $\widetilde{x}_k$ and $x$ are normal and $\sigma(\widetilde{x}_k),\sigma(x)\subset \Omega$.

Define $\phi:C(\Omega)\rightarrow A$ by $\phi(f)=f(x)$. By Lemma \ref{dU 0 dW}(i), we have
$$
d_W(\phi_{N_k},\phi)=d_W(x_{N_k},x)=d_W(\widetilde{x}_k,x)\rightarrow 0.
$$
 From the properties of $d_{\rm Cu}$ (see \ref{d Cu}), we have
\begin{eqnarray*}
d_{\rm Cu}({\rm Cu}(\phi),\alpha) &\leq & d_{\rm Cu}({\rm Cu}(\phi_{N_k}),\alpha)+d_{\rm Cu}({\rm Cu}(\phi_{N_k}),{\rm Cu}(\phi))  \\
&=& d_{\rm Cu}({\rm Cu}(\phi_{N_k}),\alpha)+d_W(\phi_{N_k},\phi) \rightarrow 0.
\end{eqnarray*}
Then the $*$-homomorphism $\phi: C(\Omega) \rightarrow A$ satisfies that $d_{\rm Cu}({\rm Cu}(\phi),\alpha)=0$, and so by Proposition \ref{Cumetric}, we have $\alpha=\mathrm{Cu}(\phi)$.

Suppose that $\psi: C(\Omega) \rightarrow A$ also satisfies $\mathrm{Cu}(\psi)=\alpha$. As $K_1(A)$ is trivial, we obtain ${\rm ind}(\phi({\rm id}))={\rm ind}(\psi({\rm id}))$.
By Lemma \ref{dU 0 dW} (ii), we obtain $d_U(\phi,\psi)=0$. Thus, $\phi$ is unique up to approximate unitary equivalence.
\end{proof}

The following properties are established in \cite[Proposition 5.2]{CES}.
\begin{proposition}\label{CES prop}
The following statements hold true:


{\rm (i)} If $\mathrm{Cu}$ classifies the pair $(A,B)$ and $B$ has stable rank one,  then $\mathrm{Cu}$ classifies the pair $\left(M_n(A), B\right)$  for every $n \in \mathbb{N}$.

{\rm (ii)} Let $C$ be a $C^*$-algebra of stable rank one. If $\mathrm{Cu}$ classifies the pairs $(A, D)$ and $(B, D)$ for all hereditary subalgebras $D$ of $C$, then   $\mathrm{Cu}$ classifies the pair $(A \oplus B, C)$.

{\rm (iii)} If  $\mathrm{Cu}$ classifies the pairs  $\left(A_i, B\right)$ for a sequence
$$
A_1 \stackrel{\rho_1}{\longrightarrow} A_2 \stackrel{\rho_2}{\longrightarrow} \ldots
$$
then   $\mathrm{Cu}$ classifies the pair $(A,B)$  the pair $(\lim\limits_{\longrightarrow}(A_i, \rho_i), B)$.

{\rm (iv)} Let $A, B$ and $C$ be $C^*$-algebras such that $A$ is stably isomorphic to $B$, and $C$ has stable rank one. If  $\mathrm{Cu}$ classifies  the pair $(A, C \otimes \mathcal{K})$, then $\mathrm{Cu}$ classifies  the pair $(B, C)$.
\end{proposition}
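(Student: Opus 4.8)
The plan is to treat the four parts uniformly: in each case I would decompose or transport the given ${\rm Cu}$-morphism into the codomain so as to reduce the assertion to one or several instances of the hypothesised classification. The tools I would rely on are (a) functoriality of ${\rm Cu}$ and its preservation of sequential inductive limits \cite{CEI}; (b) the canonical isomorphisms ${\rm Cu}(M_n(A))\cong{\rm Cu}(A)\cong{\rm Cu}(A\otimes\mathcal K)$ induced by corner embeddings, under the first of which the class of a strictly positive element of $M_n(A)$ becomes $n\langle s_A\rangle$; and (c) the arithmetic of ${\rm Cu}$ of the codomain under stable rank one --- weak cancellation (Definition \ref{wk cancel}), the fact that $a_1\oplus a_2\lesssim_{\rm Cu} c$ forces the existence of orthogonal positive $c_1,c_2\in\overline{cCc}$ with $a_i\lesssim_{\rm Cu} c_i$, and the fact that a Cuntz (sub)equivalence of positive elements is implemented by a (spatial) isomorphism of the associated hereditary subalgebras (the Hilbert-module picture of ${\rm Cu}$).

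\emph{(i).} Under ${\rm Cu}(M_n(A))\cong{\rm Cu}(A)$, a morphism $\beta\colon{\rm Cu}(M_n(A))\to{\rm Cu}(B)$ with $\beta(\langle s_{M_n(A)}\rangle)\le\langle s_B\rangle$ \emph{is} a morphism $\alpha\colon{\rm Cu}(A)\to{\rm Cu}(B)$ with $n\,\alpha(\langle s_A\rangle)\le\langle s_B\rangle$, so in particular $\alpha(\langle s_A\rangle)\le\langle s_B\rangle$. I would first apply the hypothesis to get $\phi\colon A\to B$ with ${\rm Cu}(\phi)=\alpha$ and $\langle\phi(s_A)\rangle=\alpha(\langle s_A\rangle)$; then, using $n\langle\phi(s_A)\rangle\le\langle s_B\rangle$ and stable rank one, produce mutually orthogonal $b_1,\dots,b_n\in B_+$ with $b_i\sim_{\rm Cu}\phi(s_A)$ together with partial isometries $v_i$ (in $M(B)$, or strict limits of such) implementing isomorphisms $\overline{\phi(s_A)B\phi(s_A)}\cong\overline{b_iBb_i}$; and finally set $\Phi(e_{ij}\otimes a):=v_i\phi(a)v_j^*$, which one checks is a $*$-homomorphism $M_n(A)\to B$ with ${\rm Cu}(\Phi)=\beta$. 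For uniqueness: a competing $\Phi'$ restricts along $a\mapsto e_{11}\otimes a$ (a ${\rm Cu}$-isomorphism) to a map inducing $\alpha$, hence approximately unitarily equivalent to $\phi$, and I would propagate the unitaries through the matrix units using cancellation.

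\emph{(ii)--(iii).} For (ii), I would split $\gamma\colon{\rm Cu}(A)\oplus{\rm Cu}(B)\to{\rm Cu}(C)$ as $\gamma_A\oplus\gamma_B$ with $\gamma_A(x)=\gamma(x,0)$ and $\gamma_B(y)=\gamma(0,y)$, then use $\gamma_A(\langle s_A\rangle)+\gamma_B(\langle s_B\rangle)\le\langle s_C\rangle$ and stable rank one to extract orthogonal positive $c_1,c_2\in C$ with $\gamma_A(\langle s_A\rangle)\le\langle c_1\rangle$ and $\gamma_B(\langle s_B\rangle)\le\langle c_2\rangle$; the hereditary subalgebras $D_i=\overline{c_iCc_i}$ are then orthogonal, and applying the hypothesis to $(A,D_1)$ and $(B,D_2)$ yields $\phi_A\colon A\to D_1\subseteq C$ and $\phi_B\colon B\to D_2\subseteq C$ whose orthogonal sum $\phi$ satisfies ${\rm Cu}(\phi)=\gamma$. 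For (iii), ${\rm Cu}$ preserves the inductive limit, so the morphisms $\alpha_i:=\alpha\circ{\rm Cu}(\rho_{i,\infty})\colon{\rm Cu}(A_i)\to{\rm Cu}(B)$ are compatible with $\alpha_i(\langle s_i\rangle)\le\langle s_B\rangle$; I would take $\phi_i\colon A_i\to B$ with ${\rm Cu}(\phi_i)=\alpha_i$ from the hypothesis, note that $\phi_{i+1}\circ\rho_i$ and $\phi_i$ both induce $\alpha_i$ and so are approximately unitarily equivalent, and run a one-sided Elliott approximate-intertwining argument to get $\phi\colon\varinjlim A_i\to B$ with each $\phi\circ\rho_{i,\infty}$ approximately unitarily equivalent to $\phi_i$, whence ${\rm Cu}(\phi)=\alpha$. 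In both parts uniqueness is inherited by restricting a competing map to the summands (resp.\ to the images $\rho_{i,\infty}(A_i)$), applying uniqueness there, and recombining --- for (ii) using the orthogonality of $D_1,D_2$ as in part (i), for (iii) using density of $\bigcup_i\rho_{i,\infty}(A_i)$.

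\emph{(iv), and the main obstacle.} Using ${\rm Cu}(C)={\rm Cu}(C\otimes\mathcal K)$ and a fixed stable isomorphism $A\otimes\mathcal K\cong B\otimes\mathcal K$, a morphism $\beta\colon{\rm Cu}(B)\to{\rm Cu}(C)$ with $\beta(\langle s_B\rangle)\le\langle s_C\rangle$ transports to $\widetilde\beta\colon{\rm Cu}(A)\to{\rm Cu}(C\otimes\mathcal K)$ with $\widetilde\beta(\langle s_A\rangle)$ bounded by the class of a positive element of $C\otimes\mathcal K$, so the hypothesis yields $\Phi\colon A\to C\otimes\mathcal K$ with ${\rm Cu}(\Phi)=\widetilde\beta$. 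I would amplify $\Phi$ by $\mathcal K$ and restrict to the corner $B\cong B\otimes e_{11}\subset B\otimes\mathcal K\cong A\otimes\mathcal K$ to get $\Psi\colon B\to C\otimes\mathcal K$ with ${\rm Cu}(\Psi)=\beta$ (read into ${\rm Cu}(C\otimes\mathcal K)={\rm Cu}(C)$); then, since $\langle\Psi(s_B)\rangle=\beta(\langle s_B\rangle)\le\langle s_C\rangle$ and $C$ (hence $C\otimes\mathcal K$) has stable rank one, correct $\Psi$ by an isomorphism of hereditary subalgebras so that its image lies in $\overline{s_CCs_C}\subseteq C$, obtaining $\psi\colon B\to C$; uniqueness descends from that for $(A,C\otimes\mathcal K)$. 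The categorical bookkeeping above is routine; I expect the real difficulty --- and the only place stable rank one is genuinely needed --- to be turning an inequality $\langle a\rangle\le\langle b\rangle$ in ${\rm Cu}$ of the codomain into an honest spatial relation between $\overline{aCa}$ and $\overline{bCb}$: this is what permits splitting $s_C$ into orthogonal corners in (i) and (ii), assembling the matrix units in (i), and pushing $\Psi$ back into $C$ in (iv), and making all these transfers compatible with the induced ${\rm Cu}$-maps (and, for the uniqueness halves, with approximate unitary equivalence) is the technical heart of the argument.
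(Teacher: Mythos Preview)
The paper does not give its own proof of this proposition; it merely records the statement and cites \cite[Proposition 5.2]{CES} for the argument. Your sketch is broadly in line with the proof given there: each part is reduced to the hypothesised classification by transporting or splitting the ${\rm Cu}$-morphism, and the spatial input from stable rank one (Cuntz subequivalence implemented by isomorphisms of hereditary subalgebras, orthogonal decomposition under a given positive element) is exactly what makes the transfers go through. Since there is no proof in the present paper to compare against, your plan is an appropriate reconstruction rather than a deviation.

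One point worth tightening in part (i): for uniqueness you restrict a competitor $\Phi'$ to $e_{11}\otimes A$ and invoke the hypothesis, then say you will ``propagate the unitaries through the matrix units.'' When $A$ is non-unital there are no global matrix units $e_{ij}\otimes 1$ in $M_n(A)$, so the propagation has to be done with approximate units and the partial isometries you built; alternatively (and this is closer to how \cite{CES} organises things) one can deduce (i) from (iv) via the stable isomorphism $M_n(A)\sim_{\rm st} A$, which sidesteps the matrix-unit bookkeeping entirely. Either route works, but as written this step is the least explicit part of your outline.
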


Combining Theorem \ref{thm c A} and Proposition \ref{CES prop}, we obtain the following result.

\begin{theorem}\label{thm1}
Let $A$ be either a unital matrix algebra over a compact subset of $\mathbb{C}$ 
or a sequential inductive limit of such $\mathrm{C}^*$-algebras, or a unital $\mathrm{C}^*$-algebra stably isomorphic to one such inductive limit. Suppose that $B\in \mathcal{C}$ and $K_1(B)$ is trivial. Then for every {\rm Cu}-morphism in the category $\mathbf{Cu}$
$$
\alpha: \mathrm{Cu}(A) \rightarrow \mathrm{Cu}(B)
$$
such that $\alpha\left(\langle s_A\rangle\right) \leq \langle s_B\rangle$, where $s_A \in A_{+}$ and $s_B \in B_{+}$ are strictly positive elements, there exists a homomorphism
$\phi: A \rightarrow B$ such that $\mathrm{Cu} (\phi)=\alpha$. Moreover, $\phi$ is unique up to approximate unitary equivalence.
\end{theorem}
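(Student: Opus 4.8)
The plan is to deduce Theorem \ref{thm1} from Theorem \ref{thm c A} by feeding the latter into the machinery of Proposition \ref{CES prop}. The base case is already in hand: if $A = M_n(C(\Omega))$ with $\Omega \subset \mathbb{C}$ compact and $B \in \mathcal{C}$ with $K_1(B) = 0$, then Theorem \ref{thm c A} tells us $\mathrm{Cu}$ classifies the pair $(C(\Omega), B)$, and since $B$ has stable rank one (note $B \in \mathcal{C}$ has real rank zero and strict comparison, hence stable rank one, as observed in the remark after Theorem \ref{liftthm}), part (i) of Proposition \ref{CES prop} upgrades this to: $\mathrm{Cu}$ classifies $(M_n(C(\Omega)), B)$. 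To run the inductive-limit and stable-isomorphism steps we actually need the classification statement for arbitrary hereditary subalgebras of $B$ as codomain, not just $B$ itself; so the first real point is to observe that a hereditary subalgebra $D$ of $B$ is again simple, separable, real rank zero with strict comparison (real rank zero and strict comparison pass to hereditary subalgebras, simplicity and separability obviously do), and $K_1(D) = K_1(B) = 0$ since $D$ is stably isomorphic to $B$ by Brown's theorem. Hence Theorem \ref{thm c A} applies with codomain $D$, giving that $\mathrm{Cu}$ classifies $(M_n(C(\Omega)), D)$ for every hereditary $D \subseteq B$.

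Next I would handle finite direct sums. If $A = M_{n_1}(C(\Omega_1)) \oplus \cdots \oplus M_{n_k}(C(\Omega_k))$, then since $\mathrm{Cu}$ classifies each pair $(M_{n_j}(C(\Omega_j)), D)$ for all hereditary $D \subseteq B$, iterating part (ii) of Proposition \ref{CES prop} $k-1$ times shows $\mathrm{Cu}$ classifies $(A, B)$. (One should check that the intermediate direct sums, being again direct sums of matrix algebras over $C(\Omega)$'s, continue to satisfy the hypotheses needed for each application of (ii) — this is automatic since (ii) only requires the summands to be classified against all hereditary subalgebras, which we have.) Then, for a sequential inductive limit $A = \varinjlim (A_i, \rho_i)$ where each $A_i$ is such a finite direct sum of matrix algebras over compact subsets of $\mathbb{C}$, part (iii) of Proposition \ref{CES prop} gives that $\mathrm{Cu}$ classifies $(A, B)$. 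Finally, if $A$ is unital and stably isomorphic to such an inductive limit $A'$, then $A' \otimes \mathcal{K} \cong A \otimes \mathcal{K}$; applying what we have just proved with codomain $B \otimes \mathcal{K}$ (which is still simple, separable, real rank zero, strict comparison, trivial $K_1$, stable rank one) shows $\mathrm{Cu}$ classifies $(A', B \otimes \mathcal{K})$, and then part (iv) of Proposition \ref{CES prop} transfers this to $(A, B)$. The normalization $\alpha(\langle s_A \rangle) \leq \langle s_B \rangle$ in the statement is exactly the hypothesis appearing in Definition \ref{def cls}, so the conclusion — existence of $\phi$ with $\mathrm{Cu}(\phi) = \alpha$, unique up to approximate unitary equivalence — is precisely what "$\mathrm{Cu}$ classifies $(A,B)$" means.

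The main obstacle is not any single deep step but rather the bookkeeping of verifying, at each stage, that the permanence properties of the class $\mathcal{C}$ (plus trivial $K_1$, plus stable rank one) survive the operations used: passing to hereditary subalgebras, to matrix amplifications, and to stabilizations. In particular the real-rank-zero and strict-comparison hypotheses on the codomain must be tracked carefully through $B \rightsquigarrow D$ (hereditary) and $B \rightsquigarrow B \otimes \mathcal{K}$, and one must recall that for simple $B$ with strict comparison, real rank zero already forces stable rank one (used to license Proposition \ref{CES prop}(i), (ii), (iv)). None of this is hard, but it is the place where an incautious argument could slip. I would state these permanence facts once at the outset as a short lemma and then let parts (i)--(iv) of Proposition \ref{CES prop} do the rest mechanically.
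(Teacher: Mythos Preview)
Your proposal is correct and follows exactly the route the paper takes: the paper's entire proof is the single sentence ``Combining Theorem~\ref{thm c A} and Proposition~\ref{CES prop}, we obtain the following result,'' and you have correctly unpacked what that combination entails. One minor point: the theorem as stated takes inductive limits of single matrix algebras $M_n(C(\Omega))$ rather than of finite direct sums, so your direct-sum step via Proposition~\ref{CES prop}(ii) is not strictly needed here (though it is harmless and relevant for the later corollary).
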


\begin{definition}\rm ({\bf Augmented Cuntz semigroup})
Let $A$ be a unital $\mathrm{C}^*$-algebra. Let us define $\mathrm{Cu}^{\sim}(A)$ as the ordered semigroup of formal differences $\langle a\rangle-n\langle 1\rangle$, with $\langle a\rangle \in \mathrm{Cu}(A)$ and $n \in \mathbb{N}$. That is, $\mathrm{Cu}^{\sim}(A)$ is the quotient of the semigroup of pairs $(\langle a\rangle, n)$, with $\langle a\rangle \in \operatorname{Cu}(A)$ and $n \in \mathbb{N}$, by the equivalence relation $(\langle a\rangle, n) \sim(\langle b\rangle, m)$ if
$$
\langle a \rangle+m\langle 1\rangle+k\langle 1\rangle=\langle b\rangle+n\langle 1\rangle+k\langle 1\rangle,
$$
for some $k \in \mathbb{N}$. The image of $(\langle a\rangle, n)$ in this quotient will be denoted by $\langle a\rangle-n\langle 1\rangle$. If $A$ is non-unital,
denote by $\pi: A^{\sim} \rightarrow \mathbb{C}$  the quotient map from the unitization of $A$ onto $\mathbb{C}$.
Define $\mathrm{Cu}^{\sim}(A)$ as the subsemigroup of $\mathrm{Cu}^{\sim}\left(A^{\sim}\right)$ consisting of the elements $\langle a\rangle-n\langle 1\rangle$, with $\langle a\rangle$ in $\mathrm{Cu}\left(A^{\sim}\right)$ such that $\mathrm{Cu}(\pi)(\langle a\rangle)=n<\infty$.  We refer the reader to \cite{R2012} 
for more details.
\end{definition}


The functor ${\rm Cu}^\sim$ can also be used to classify the $C^*$-pair. Note that we will not explore the detailed structure of ${\rm Cu}^\sim$, we only need the following facts; see Theorem 3.2.2  in \cite{R2012}.
\begin{theorem}
Let $A,B$ be $\mathrm{C}^*$-algebras of stable rank one.

{\rm (i)} If ${A}$ is unital then the functor $\mathrm{Cu}^{\sim}$ classifies the pair $(A,B)$ if and only if $\mathrm{Cu}$ classifies the pair $(A,B)$.

{\rm (ii)} The functor $\mathrm{Cu}^{\sim}$ classifies the pair $(A,B)$ if and only if it classifies the pair $(A^\sim,B)$.

{\rm (iii)} Suppose $\mathrm{Cu}^{\sim}$ classifies the sequence of pairs $(A_i,B)$ as in Proposition \ref{CES prop} and  all the $A_i$  are $\mathrm{C}^*$-algebras of stable rank one. If $A=\underrightarrow{\lim} A_i$, then $\mathrm{Cu}^{\sim}$ classifies the pair $(A,B)$.

{\rm (iv)} If $\mathrm{Cu}^\sim$ classifies the pairs $(A,C)$ and $(B,C)$, where $C$ is of stable rank one, then it classifies the pair $(A \oplus B,C)$.

{\rm (v)} If $\mathrm{Cu}^{\sim}$ classifies the pair  $(A,B)$, then it classifies the pair $(A^{\prime},B)$ for any $A^{\prime}$ stably isomorphic to $A$.
\end{theorem}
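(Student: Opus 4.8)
The plan is to deduce each of (i)--(v) from the corresponding facts about the ordinary functor ${\rm Cu}$ (notably Proposition \ref{CES prop}) together with the basic permanence properties of the augmented semigroup, following \cite{R2012}. The organizing principle is that the assertion ``${\rm Cu}^\sim$ classifies $(A,B)$'' is an existence-plus-uniqueness statement about lifting morphisms to $\ast$-homomorphisms modulo approximate unitary equivalence, and such statements are stable under unitization, sequential inductive limits, finite direct sums and stable isomorphism by an Elliott-type approximate intertwining argument; I would build the proof around that.

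For (i), when $A$ is unital the class $\langle 1_A\rangle$ already lies in ${\rm Cu}(A)$, so every ${\rm Cu}^\sim$-morphism out of ${\rm Cu}^\sim(A)$ is the unique extension of its restriction to ${\rm Cu}(A)$ as a semigroup morphism respecting the formal differences, and conversely a ${\rm Cu}$-morphism out of ${\rm Cu}(A)$ with $\alpha(\langle 1_A\rangle)$ suitably dominated extends uniquely to ${\rm Cu}^\sim$. Since moreover ${\rm Cu}^\sim(\phi)={\rm Cu}^\sim(\psi)$ holds precisely when ${\rm Cu}(\phi)={\rm Cu}(\psi)$, and approximate unitary equivalence is the same relation in both pictures, the two classification statements are literally equivalent. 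For (ii), one uses that a $\ast$-homomorphism $A\to B$ extends canonically to a unital one $A^\sim\to B^\sim$ and that ${\rm Cu}^\sim(A)$ is by definition read off from the pair $(A^\sim,\pi)$; a ${\rm Cu}^\sim$-morphism ${\rm Cu}^\sim(A)\to{\rm Cu}^\sim(B)$ extends to ${\rm Cu}^\sim(A^\sim)\to{\rm Cu}^\sim(B^\sim)$ compatibly with these extensions, and the only nontrivial point is that the unitaries implementing approximate equivalence can be kept in the correct algebra, which is where stable rank one enters.

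For (iii) and (iv) I would invoke that ${\rm Cu}^\sim$ preserves sequential inductive limits and finite direct sums, ${\rm Cu}^\sim(A\oplus B)\cong{\rm Cu}^\sim(A)\oplus{\rm Cu}^\sim(B)$. Given $A=\varinjlim(A_i,\rho_i)$ and $\alpha\colon{\rm Cu}^\sim(A)\to{\rm Cu}^\sim(B)$, precomposition with the canonical maps yields ${\rm Cu}^\sim$-morphisms $\alpha_i$, hence by hypothesis $\ast$-homomorphisms $\phi_i\colon A_i\to B$ with ${\rm Cu}^\sim(\phi_i)=\alpha_i$; the maps $\phi_{i+1}\circ\rho_i$ and $\phi_i$ induce the same invariant, so by the uniqueness half they are approximately unitarily equivalent, and an approximate intertwining then produces $\phi\colon A\to B$ with ${\rm Cu}^\sim(\phi)=\alpha$, uniqueness being handled the same way. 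For $A\oplus B$ one decomposes $\alpha$, lifts the images of the strictly positive elements of the two summands to mutually orthogonal positive elements of $C$ (using stable rank one to orthogonalize by a small perturbation), classifies each summand into the corresponding hereditary subalgebra, and assembles the two $\ast$-homomorphisms; uniqueness again reduces to the summands. For (v), ${\rm Cu}^\sim$ is a stable invariant, so a stable isomorphism $A'\sim A$ gives ${\rm Cu}^\sim(A')\cong{\rm Cu}^\sim(A)$, and combining this with the already-established passage of classification between an algebra, its matrix amplifications, its stabilization and their hereditary subalgebras transfers classification of $(A,C)$ to $(A',C)$.

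The genuinely nonformal steps are the approximate intertwining arguments in (iii) and (iv)---upgrading stagewise approximate unitary equivalences to an honest $\ast$-homomorphism on the limit---and, in (ii) and (iv), the care needed to keep the implementing unitaries and the orthogonalizing perturbations inside the correct $C^*$-algebra, which is precisely where the stable rank one hypothesis is used; everything else is bookkeeping with the definition of ${\rm Cu}^\sim$.
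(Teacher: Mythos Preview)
The paper does not supply its own proof of this theorem: it is quoted verbatim from Robert \cite[Theorem~3.2.2]{R2012}, with the preceding sentence ``we will not explore the detailed structure of $\mathrm{Cu}^\sim$, we only need the following facts''. So there is nothing in the present paper to compare your sketch against.

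That said, your outline is faithful to Robert's original argument: (i) and (ii) are reductions via the definition of $\mathrm{Cu}^\sim$ in terms of the unitization, (iii) is an approximate intertwining, (iv) uses $\mathrm{Cu}^\sim(A\oplus B)\cong\mathrm{Cu}^\sim(A)\oplus\mathrm{Cu}^\sim(B)$ together with orthogonalization of the images under stable rank one, and (v) is stability of $\mathrm{Cu}^\sim$. Your identification of the nonformal steps (the intertwining in (iii) and the control of unitaries/orthogonalization in (ii) and (iv)) matches where Robert's paper does the actual work. One small caveat on (iv): in Robert's formulation (and in the statement here) the target $C$ is fixed, not replaced by hereditary subalgebras as in Proposition~\ref{CES prop}(ii); the orthogonalization lands the two summand homomorphisms in orthogonal hereditary subalgebras of $C$ itself, and one then needs that $\mathrm{Cu}^\sim$-classification into $C$ automatically yields classification into such a corner---this is implicit in your sketch but worth making explicit.
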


\begin{theorem}\label{thm2}
Let $A$ be either a matrix algebra over a compact subset of $\mathbb{C}$, 
or a sequential inductive limit of such $\mathrm{C}^*$-algebras, or a $\mathrm{C}^*$-algebra stably isomorphic to one such inductive limit. Let $B\in \mathcal{C}$. Suppose that $K_1(B)$ is trivial. Then for every morphism in the category $\mathbf{Cu}$
$$
\alpha: \mathrm{Cu}^\sim(A) \rightarrow \mathrm{Cu}^\sim(B)
$$
such that $\alpha\left(\langle s_A\rangle\right) \leq \langle s_B\rangle$, where $s_A \in A_{+}$ and $s_B \in B_{+}$ are strictly positive elements, there exists a homomorphism
$\phi: A \rightarrow B$ such that $\mathrm{Cu}^\sim (\phi)=\alpha$. Moreover, $\phi$ is unique up to approximate unitary equivalence.
\end{theorem}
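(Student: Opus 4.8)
The plan is to reduce Theorem~\ref{thm2} to the unital case already handled in Theorem~\ref{thm1}, using the properties of the augmented Cuntz semigroup functor $\mathrm{Cu}^\sim$ and the stability/limit properties collected just above. First I would observe that, by the theorem of Robert quoted above, $\mathrm{Cu}^\sim$ classifies a pair $(A,B)$ of $C^*$-algebras of stable rank one if and only if $\mathrm{Cu}$ does, whenever $A$ is unital; so the content of Theorem~\ref{thm2} over Theorem~\ref{thm1} is exactly the passage to the non-unital (and in general non-unital-inductive-limit) setting. I would therefore proceed by the following chain, in order: (1) if $A=M_n(C(\Omega))$ with $\Omega$ a compact subset of $\mathbb{C}$, then $A$ is unital and of stable rank one, so Theorem~\ref{thm1} gives that $\mathrm{Cu}$ classifies $(A,B)$, hence $\mathrm{Cu}^\sim$ classifies $(A,B)$ by part~(i) of the quoted theorem; (2) if $A^\sim$ denotes the unitization in the non-unital building-block case, then part~(ii) of that theorem lets us pass between $(A,B)$ and $(A^\sim,B)$, and $A^\sim$ is a unital algebra to which step~(1)-type reasoning applies; (3) for a sequential inductive limit $A=\varinjlim A_i$ of such building blocks, all $A_i$ have stable rank one (this is inherited, and $B$-side hypotheses are unchanged), so part~(iii) of the quoted theorem upgrades classification of the pairs $(A_i,B)$ to classification of $(A,B)$; (4) finally, for $A'$ stably isomorphic to such a limit, part~(v) gives classification of $(A',B)$.

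Before invoking these structural results I need to make sure their hypotheses are genuinely met. The key standing facts are: $B\in\mathcal C$ with $K_1(B)$ trivial, so by the remark after Theorem~\ref{liftthm}, $B$ has strict comparison, weakly unperforated $K_0(B)$, and — since $B$ also has real rank zero — stable rank one; thus $B$ is of stable rank one, as required throughout. Each building block $M_n(C(\Omega))$ has stable rank one because $C(\Omega)$ does for $\Omega\subset\mathbb{C}$ (covering dimension $\le 2$ forces $\mathrm{sr}(C(\Omega))\le 2$, and in fact for planar compacta one has $\mathrm{sr}(C(\Omega))=1$; alternatively, $C(\Omega)$ is a limit of algebras of functions on graphs), and stable rank one passes to matrix amplifications and, by a theorem of Rieffel, to unitizations and to sequential inductive limits. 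I would state these as a single preliminary lemma so that parts~(ii)--(v) of Robert's theorem apply without further comment.

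The main technical point — and the step I expect to require the most care — is verifying that the normalization hypothesis in Theorem~\ref{thm2}, namely $\alpha(\langle s_A\rangle)\le\langle s_B\rangle$ for a strictly positive $s_A\in A_+$, is the correct input for each reduction step, and in particular that it is preserved when we replace $A$ by $A^\sim$ or by $A_i$ along the inductive limit. Concretely: a $\mathrm{Cu}^\sim$-morphism $\alpha:\mathrm{Cu}^\sim(A)\to\mathrm{Cu}^\sim(B)$ restricts to a $\mathrm{Cu}$-morphism on the ``positive cone'' $\mathrm{Cu}(A)\subset\mathrm{Cu}^\sim(A)$ (the image of the pairs $(\langle a\rangle,0)$), and $\langle s_A\rangle$ lives in this cone; I must check that the definition of ``$\mathrm{Cu}^\sim$ classifies the pair'' used in Robert's theorem is exactly the one matching the hypothesis $\alpha(\langle s_A\rangle)\le\langle s_B\rangle$ here, and that the lifting $\phi:A\to B$ produced on the unital/finite-stage level induces the prescribed $\alpha$ on all of $\mathrm{Cu}^\sim$, not merely on $\mathrm{Cu}$. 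Once the bookkeeping of strictly positive elements and of the $\mathrm{Cu}^\sim$ versus $\mathrm{Cu}$ normalization is pinned down, the existence and uniqueness (up to approximate unitary equivalence) of $\phi$ follow formally from the cited parts of Robert's theorem together with Theorem~\ref{thm1}. I would close by remarking that triviality of $K_1(B)$ is used only through Theorem~\ref{thm1} (it is what makes the index obstruction $\mathrm{ind}(\phi(\mathrm{id}))$ vanish, giving uniqueness there), and is not needed again in the reduction.
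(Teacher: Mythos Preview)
Your reduction strategy---apply Theorem~\ref{thm1} to the unital building blocks $M_n(C(\Omega))$, then invoke parts (i), (iii) and (v) of Robert's theorem on $\mathrm{Cu}^\sim$ to pass to inductive limits and stably isomorphic algebras---is exactly the argument the paper intends; Theorem~\ref{thm2} is stated immediately after that theorem with no further proof, so the deduction you outline is the one implicit in the text. Your step~(2) on unitization is superfluous here, since every building block $M_n(C(\Omega))$ with $\Omega$ compact is already unital; part~(ii) is not needed in the chain.

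There is, however, a genuine error in your verification of hypotheses. You assert that $\mathrm{sr}(C(\Omega))=1$ for all compact $\Omega\subset\mathbb{C}$, but this is false: Rieffel's formula gives $\mathrm{sr}(C(X))=\lfloor\dim X/2\rfloor+1$, so any planar compact set of covering dimension~$2$ (for instance the closed unit disk) yields $\mathrm{sr}(C(\Omega))=2$. Concretely, the function $z$ on the closed disk cannot be approximated by invertibles, by a winding-number obstruction on the boundary circle. Since the version of Robert's theorem quoted in the paper carries the standing hypothesis that $A$ have stable rank one, and part~(iii) explicitly requires it of the $A_i$, you cannot simply claim it. The paper itself glosses over this point; to make the argument honest you must either restrict to $\Omega$ of dimension at most one, or verify directly that the relevant parts of Robert's machinery (part~(i) is essentially formal for unital $A$, for example) go through without stable rank one on the domain side.
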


\begin{remark}
  In \cite{R2012}, Robert defined an equivalence relation to reduce every 1-NCCW complex with trivial $K_1$  to $C[0,1]$.  One may expect that all 1-NCCW complex with torsion-free $K_1$ can reduce to continuous functions over finite graphs. However, this is not true in general, as the following  example shows.

  Let $F_1=\mathbb{C}\oplus\mathbb{C}$ and  $F_2=\mathbb{C}\oplus M_2(\mathbb{C})$. Let $A$ be the pullback of the following diagram:
  $$
\xymatrixcolsep{2pc}
\xymatrix{
 {\,\,A\,\,} \ar[d]_-{{\rm }} \ar[r]^-{}
& {\,\,C([0,1],F_2)\,\,}  \ar[d]^-{{\rm ev}_0\oplus {\rm ev}_1}
 \\
 {\,\,F_1\,\,} \ar[r]^-{\phi}
& {\,\, F_2\oplus F_2 \,\,} .}
$$
where $$\phi(\lambda\oplus\mu)=\left(\lambda\oplus \left(\begin{array}{cc}
  \lambda&  \\
 & \lambda
  \end{array}\right)\right)\oplus
  \left(\mu\oplus \left(\begin{array}{cc}
  \mu&  \\
 & \mu
  \end{array}\right)\right).
  $$
 Then $K_0(A)=\mathbb{Z}, K_1(A)=\mathbb{Z}$. But $A$ can't be reduced to $C(\mathbb{T})$ via Robert's equivalence relation.
\end{remark}
 With a combination of Theorem \ref{thm1} and Theorem \ref{thm2}, we present the following classification result of a class of $C^*$-algebras.
\begin{corollary}
Let $A,B$ be inductive limits of finite direct sums of matrix algebras over compact subsets of $\mathbb{C}$. Suppose that $A, B\in \mathcal{C}$ and $K_1(A)$, $K_1(B)$ are trivial. Then

{\rm (1)} $A\cong B$ if and only if
$
(\mathrm{Cu}^\sim(A), \langle s_A \rangle) \cong (\mathrm{Cu}^\sim(B), \langle s_B \rangle),
$
where $s_A \in A_{+}$ and $s_B \in B_{+}$ are strictly positive elements;

{\rm (2)} if $A,B$ are unital,  $A\cong B$ if and only if
$
(\mathrm{Cu}(A), \langle 1_A \rangle) \cong (\mathrm{Cu}(B), \langle 1_B \rangle).
$
\end{corollary}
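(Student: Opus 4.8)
The plan is to derive the Corollary from Theorems \ref{thm1} and \ref{thm2} via the standard Elliott intertwining argument, exactly as in the classical two-sided approximate intertwining scheme. Consider statement (1). The ``only if'' direction is trivial, since an isomorphism $A \cong B$ induces an isomorphism of Cuntz semigroups carrying the class of a strictly positive element to the class of a strictly positive element. For the ``if'' direction, suppose $\gamma: (\mathrm{Cu}^\sim(A), \langle s_A\rangle) \to (\mathrm{Cu}^\sim(B), \langle s_B\rangle)$ is an isomorphism with inverse $\gamma^{-1}$. Since $A$ and $B$ are inductive limits of finite direct sums of matrix algebras over compact subsets of $\mathbb{C}$, and since $A, B \in \mathcal{C}$ have real rank zero and strict comparison, they have stable rank one; moreover the hypotheses of Theorem \ref{thm2} are met for both pairs $(A,B)$ and $(B,A)$ because $K_1(A)$ and $K_1(B)$ are trivial. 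Hence $\mathrm{Cu}^\sim$ classifies both pairs $(A,B)$ and $(B,A)$ in the sense of Definition \ref{def cls} (adapted to $\mathrm{Cu}^\sim$): $\gamma$ lifts to a homomorphism $\phi: A \to B$ with $\mathrm{Cu}^\sim(\phi) = \gamma$, unique up to approximate unitary equivalence, and $\gamma^{-1}$ lifts to $\psi: B \to A$ with $\mathrm{Cu}^\sim(\psi) = \gamma^{-1}$, again unique up to approximate unitary equivalence.

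Next I would run the approximate intertwining. The compositions $\psi \circ \phi: A \to A$ and $\phi \circ \psi: B \to B$ satisfy $\mathrm{Cu}^\sim(\psi\circ\phi) = \gamma^{-1}\circ\gamma = \mathrm{id}_{\mathrm{Cu}^\sim(A)}$ and similarly $\mathrm{Cu}^\sim(\phi\circ\psi) = \mathrm{id}_{\mathrm{Cu}^\sim(B)}$. By the uniqueness clause, $\psi\circ\phi$ is approximately unitarily equivalent to $\mathrm{id}_A$ and $\phi\circ\psi$ is approximately unitarily equivalent to $\mathrm{id}_B$ (one must check that $\mathrm{id}_A$ itself is a lift of the identity Cu-morphism, which is immediate). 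One then feeds $\phi$, $\psi$, and these approximate-unitary-equivalence data into Elliott's two-sided approximate intertwining argument: writing $A = \overline{\bigcup A_n}$ and $B = \overline{\bigcup B_n}$ as increasing unions of finite subsets (or of the building-block subalgebras), one inductively adjusts $\phi$ and $\psi$ by unitaries so that the relevant triangles commute up to smaller and smaller tolerances on larger and larger finite sets; the limit produces mutually inverse isomorphisms $A \cong B$. This is precisely the mechanism used in \cite{CES} and \cite{R2012} to pass from ``$\mathrm{Cu}$ classifies the pair'' to ``classification of the algebras,'' so I would invoke it rather than reproduce it.

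For statement (2), when $A$ and $B$ are unital I would argue identically but using Theorem \ref{thm1} (or equivalently, the unital equivalence $\mathrm{Cu}^\sim$-classifies $\iff$ $\mathrm{Cu}$-classifies from the cited Theorem 3.2.2(i) of \cite{R2012}): an isomorphism $(\mathrm{Cu}(A), \langle 1_A\rangle) \cong (\mathrm{Cu}(B), \langle 1_B\rangle)$ lifts to mutually inverse unital homomorphisms up to approximate unitary equivalence, and the same intertwining produces $A \cong B$. Here one should note that an isomorphism of pointed Cuntz semigroups sending $\langle 1_A\rangle$ to $\langle 1_B\rangle$ in particular satisfies the inequality $\alpha(\langle 1_A\rangle) \le \langle 1_B\rangle$ required to invoke Theorem \ref{thm1}, and likewise for the inverse.

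I expect the only real subtlety — rather than a genuine obstacle — to be bookkeeping: verifying that the classes of algebras in the Corollary genuinely fall under the hypotheses of Theorems \ref{thm1} and \ref{thm2} (in particular that finite direct sums and inductive limits are handled by Proposition \ref{CES prop} and the $\mathrm{Cu}^\sim$ analogue, and that stable rank one holds so that those results apply), and confirming that approximate unitary equivalence of the composites is exactly the input the Elliott intertwining requires. The core classification work has already been done in Theorem \ref{thm c A}; the Corollary is a formal consequence.
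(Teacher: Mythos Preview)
Your proposal is correct and matches the paper's approach: the paper gives no explicit proof at all, merely stating that the corollary follows ``with a combination of Theorem~\ref{thm1} and Theorem~\ref{thm2}.'' Your Elliott two-sided intertwining argument is precisely the standard mechanism that this sentence is implicitly invoking, and your bookkeeping remarks (stable rank one, applicability of Proposition~\ref{CES prop} and its $\mathrm{Cu}^\sim$ analogue, and that the pointed isomorphism gives the required inequality for the lifting theorems in both directions) are the right checks.
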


\section*{Acknowledgements}
The research of the first author was supported by NNSF of China (Grants No.: 12101113,
No.: 11920101001). 
The research of the second author was supported by NSERC of Canada.
The third author was supported by NNSF of China (Grant No.: 12101102).

\end{document}